\documentclass[reqno]{amsart}
\usepackage{microtype}

\usepackage[shortlabels]{enumitem}
\setlist[enumerate]{label={(\arabic*)}}

\usepackage{amssymb}
\usepackage[dvipsnames,table]{xcolor}
\usepackage
[colorlinks=true,linkcolor=Maroon,citecolor=OliveGreen,backref]
{hyperref}
\usepackage[abbrev,shortalphabetic]{amsrefs}  % bibliography
\usepackage{cleveref}

\usepackage{stmaryrd} %mapsfrom
\usepackage{tikz}
\usetikzlibrary{3d, cd}

\usepackage[norefs, nocites]{refcheck}

%%%%% Make refcheck aware of cleverref (https://tex.stackexchange.com/questions/87610/making-refcheck-work-with-cleveref)
\makeatletter
\newcommand{\refcheckize}[1]{%
  \expandafter\let\csname @@\string#1\endcsname#1%
  \expandafter\DeclareRobustCommand\csname relax\string#1\endcsname[1]{%
    \csname @@\string#1\endcsname{##1}\@for\@temp:=##1\do{\wrtusdrf{\@temp}\wrtusdrf{{\@temp}}}}%
  \expandafter\let\expandafter#1\csname relax\string#1\endcsname
}
\makeatother
\refcheckize{\cref}
\refcheckize{\Cref}
%%%%

\numberwithin{equation}{section}

\newtheorem{theorem}{Theorem}[section]
\newtheorem*{theorem*}{Theorem}
\newtheorem{lemma}[theorem]{Lemma}
\newtheorem{proposition}[theorem]{Proposition}
\newtheorem{corollary}[theorem]{Corollary}

\theoremstyle{definition}
\newtheorem{remark}[theorem]{Remark}
\newtheorem{definition}[theorem]{Definition}

\newtheorem{conjecture}[theorem]{Conjecture} \Crefname{conjecture}{Conjecture}{Conjectures}

\newtheoremstyle{named}{}{}{\itshape}{}{\bfseries}{.}{.5em}{#3}
\theoremstyle{named}

\newcommand{\eps}{\varepsilon}
\renewcommand{\phi}{\varphi}
\renewcommand{\bar}{\overline}

\renewcommand{\subset}{\subseteq}

\renewcommand\tbinom[2]{{\textstyle \binom{#1}{#2}}}

% common notation
\newcommand\opr[1]{\operatorname{#1}}

% rings

\def\Z{\mathbf{Z}}

  % space of matrices

\def\Aut{\opr{Aut}}
\def\Iso{\opr{Iso}}

\def\nsgp{\trianglelefteq}
\def\Sym{\opr{Sym}}
\def\Alt{\opr{Alt}}

\newcommand\br[1]{{\left(#1\right)}}
\newcommand\floor[1]{\left\lfloor{#1}\right\rfloor}

\newcommand\pfrac[2]{\br{\frac{#1}{#2}}}

\renewcommand{\{}{\left\lbrace}
\renewcommand{\}}{\right\rbrace}

\def\X{\mathfrak{X}}

\def\S{\mathfrak{S}}
\def\J{\mathcal{J}}  % Johnson
\def\H{\mathcal{H}}  % Hamming
\newcommand\Ham[2]{\H(#2, #1)}
\def\Cam{\mathcal{C}}
\def\T{\mathcal{T}}  % trivial

\def\rank{\opr{rank}}

% margin comments
\usepackage{todonotes}

\begin{document}
    
    \title{Hamming sandwiches}
    
    \author{Sean Eberhard}
    \address{Sean Eberhard, Centre for Mathematical Sciences, Wilberforce Road, Cambridge CB3~0WB, UK}
    \email{eberhard@maths.cam.ac.uk}
    
    \thanks{SE has received funding from the European Research Council (ERC) under the European Union’s Horizon 2020 research and innovation programme (grant agreement No. 803711).
    }
    
    \begin{abstract}
        We describe primitive association schemes $\X$ of degree $n$ such that $\Aut(\X)$ is imprimitive and $|\Aut(\X)| \geq \exp(n^{1/8})$,
        contradicting a conjecture of Babai.
        This and other examples we give are the first known examples of nonschurian primitive coherent configurations (PCC) with more than a quasipolynomial number of automorphisms.
        
        Our constructions are ``Hamming sandwiches'', association schemes sandwiched between the $d$th tensor power of the trivial scheme and the $d$-dimensional Hamming scheme.
        We study Hamming sandwiches in general, and exhaustively for $d \leq 8$.
        
        We revise Babai's conjecture by suggesting that any PCC with more than a quasipolynomial number of automorphisms must be an association scheme sandwiched between a tensor power of a Johnson scheme and the corresponding full Cameron scheme.
        If true, it follows that any nonschurian PCC has at most $\exp O(n^{1/8} \log n)$ automorphisms.
    \end{abstract}
    
    \maketitle
%    \setcounter{tocdepth}{1}
%    \tableofcontents
    
    \section{Introduction}
    
    Coherent configurations were introduced as a means to study permutation groups $G \leq \Sym(V)$ through combinatorics.
    The definition abstracts the combinatorial properties of the set of \emph{orbital relations},
    the set of orbits of $G$ on $V^2$.
    What makes the theory notable is that, even though it is a purely combinatorial theory including for example strongly regular graphs, much of the theory of groups including character theory can be recovered.
    For this reason the theory of coherent configurations is sometimes described as ``group theory without groups''.
    
    The theory has a complicated history, with origins in multiple fields.
    Coherent configurations were reinvented several times, sometimes in a slightly restricted form, and always with a fresh name.
    Schur~\cite{schur} called them ``Stammringe'' (root rings),
    Wielandt~\cite{wielandt}*{Chapter~IV} called them ``$S$-rings'',
    Bose and Shimamoto~\cite{bose--shimamoto} called them ``association schemes'',
    Weisfeiler and Leman~\cite{WL} called them ``cellular algebras'',
    and finally Higman~\cite{higman} coined the now dominant term ``coherent configuration''.
    Naturally, terminology has been inconsistent, so it is important to clearly specify our language, which mostly follows Higman.
    
    \subsection{Definitions}
    
    Our definitions are mostly consistent with the recent notes of Chen and Ponomarenko~\cite{CP}.
    Other references include Bannai--Ito~\cite{bannai-ito} (see also the updated version \cite{BannaiBannaiItoTanaka+2021}) and Bailey~\cite{bailey}.
    A \emph{configuration} $\X$ on a finite set $V$ is a partition of $V^2$ into nonempty relations $R_1, \dots, R_r$ such that
    \begin{enumerate}
        \item some subset of $\X$ partitions the diagonal $\{(v, v) : v \in V\}$,
        \item if $R \in \X$ then $R^* = \{(x, y) : (y, x) \in R\} \in \X$.
    \end{enumerate}
    A configuration is  \emph{coherent} if
    \begin{enumerate}[resume]
        \item there are constants $p^i_{jk}$ (the \emph{intersection numbers} or \emph{structure constants}) for all $i, j, k$ such that
    \[
        \forall (u, v) \in R_i : ~
        \# \{ w \in V : (u, w) \in R_j , (w, v) \in R_k \} = p^i_{jk}.
    \]
    \end{enumerate}
    The \emph{degree} of $\X$ is the number of vertices $|V|$;
    the \emph{rank} of $\X$ is the number of relations $r$.

    A configuration is \emph{homogeneous} if, more strongly than (1), $\X$ contains the identity relation $\{(v, v) : v \in V\}$.
    A configuration $\X$ is \emph{symmetric} if $R^* = R$ for every $R \in \X$.
    A homogeneous symmetric coherent configuration is called an \emph{association scheme}.
    
    Even if $\X$ is homogeneous, it may be possible to break it into smaller pieces that can in principle be studied separately.
    Each relation $R \in \X$ apart from the identity relation defines a directed graph, called a \emph{constituent graph} or \emph{color graph} of $\X$.
    A homogeneous coherent configuration $\X$ is \emph{primitive} if each color graph is connected (there is no distinction between weak and strong connectedness, since coherence guarantees that each color graph is regular).
    
    There are two common notions of isomorphism for coherent configurations, depending on whether the relations of $\X$ are considered labeled.
    In this paper we usually consider $\X$ an \emph{unordered} partition $\{R_1, \dots, R_r\}$ of $V^2$, and thus we call two configurations $\X$ and $\X'$ \emph{isomorphic}, written $\X \cong \X'$, if there is a bijection $f : V \to V'$ mapping each $R \in \X$ to some $R' \in \X'$ (i.e., mapping $\X$ as a partition of $V^2$ to $\X'$ as a partition of $(V')^2$).
    Sometimes the term \emph{weakly isomorphic} is used instead, with the same meaning, to distinguish from another notion discussed in a moment.
    The set of weak isomorphisms $\X \to \X'$ is denoted $\Iso_w(\X, \X')$.
    The \emph{weak automorphism group} $\Aut_w(\X) = \Iso_w(\X, \X)$ thus consists of the permutations of $V$ which permute the relations of $\X$.

    If instead the labels of $R_1, \dots, R_r$ are considered part of the defining data of $\X$ then it is reasonable to say $\X$ and $\X'$ are isomorphic only if there is a bijection $f : V \to V'$ mapping $R_i$ to $R'_i$ for each $i$; this is the notion of \emph{strong isomorphism}.
    In general this notion makes sense only if $\X$ is equipped with extra data, namely a labeling of its relations, but we can always make sense of the \emph{strong automorphism group} $\Aut_s(\X) \nsgp \Aut_w(\X)$, which consists of permutations of $V$ preserving each relation individually.
    The group $\Aut_s(\X)$ is usually abbreviated to simply $\Aut(\X)$;
    its elements are usually just called \emph{automorphisms}.
    
    Whenever $\pi$ and $\pi'$ are partitions of the same set we write $\pi \leq \pi'$ if every cell of $\pi$ is contained in a cell of $\pi'$.
    In particular this applies to configurations $\X$ and $\X'$ on the same vertex set.
    If $\X \leq \X'$ then $\X'$ is a \emph{fusion} of $\X$,
    and $\X$ is a \emph{fission} of $\X'$.
    (This is opposite to the convention of \cite{CP}, but agrees with \cite{bailey}.)
    
    The prototypical coherent configuration is the orbital configuration of a permutation group. Suppose $G \leq \Sym(V)$ is a permutation group, and let $\X(G)$ be the partition of $V^2$ into $G$-orbits. Then $\X(G)$ is a coherent configuration. We call $\X(G)$ the \emph{orbital configuration} of $G$, and any configuration of the form $\X(G)$ is called \emph{schurian}
    (after Schur, who guessed that all coherent configurations were orbital configurations).
    Note that $G$ is transitive if and only if $\X(G)$ is homogeneous,
    and $G$ is primitive if and only if $\X(G)$ is primitive.
    Note also that $\X$ is schurian if and only if $\X = \X(\Aut(\X))$.
    
    Prominent among the schurian configurations are the \emph{discrete configuration} $\X(1)$, in which every relation is a singleton, and the \emph{trivial scheme} $\T_n = \X(\Sym(n))$, which is the unique configuration of degree $n$ and rank 2 (for $n > 1$).
    Other basic examples include the \emph{Hamming scheme}\footnote{%
    The author would rather swap the parameter order in $\Ham md$ to emphasize the comparison with $\J(m, d)$,
    but the notation is entrenched in the literature.}
     $\Ham{m}{d}$, whose vertex set is $[m]^d$, where $[m] = \{1, \dots, m\}$, and whose relations are defined by Hamming distance -- the orbital scheme of $\Sym(m) \wr \Sym(d) \leq \Sym([m]^d)$ --
    and the \emph{Johnson scheme} $\J(m, k)$, whose vertices are $k$-subsets of $[m]$ and whose relations are defined by $| u \setminus v | = i$ for $i \in \{0, \dots, k\}$ -- the orbital scheme of $\Sym(m)^{(k)} \leq \Sym(\binom{[m]}{k})$ (throughout the paper, we write $\Sym(\Omega)^{(k)}$ for the image of the natural map $\Sym(\Omega) \to \Sym(\binom{\Omega}{k})$).
    Note that $\J(m, k) \cong \J(m, m-k)$, so we may always assume $m \geq 2k$.
    
    If $\X$ and $\X'$ are coherent configurations on $V$ and $V'$ respectively, their \emph{tensor product} $\X \otimes \X'$ is the coherent configuration on $V \times V'$ whose relations are
    \[
        R \otimes R' = \{((u, u'), (v, v')) : (u, v) \in R, (u', v') \in R'\} \qquad (R \in \X, R' \in \X').
    \]
    If $\X$ is a coherent configuration and $G \leq \Sym(d)$, then $G$ acts on the tensor power $\X^d$ by weak automorphisms;
    the \emph{exponentiation} $\X \uparrow G$ is the coherent configuration obtained from $\X^d$ by fusing $G$-orbits (see \cite{CP}*{Section~3.4.2} or \cite{godsil}).
    For example, $\Ham{m}{d} \cong \T_m \uparrow \Sym(d)$.
    
    Association schemes of the form $\J(m, k) \uparrow G$ for some group $G \leq \Sym(d)$ are called \emph{Cameron schemes}.
    We refer to
    \[
        \Cam(m, k, d) = \J(m, k) \uparrow \Sym(d)
    \]
    as the \emph{full} Cameron scheme.
    Its vertices are $d$-tuples of $k$-subsets of $[m]$, and the cell containing the pair $(u, v) \in V^2$ is determined by $|u_1 \setminus v_1|, \dots, |u_d \setminus v_d|$, ignoring order.
    
    \subsection{The theorems of Babai and Cameron}
    
    If the purpose of the theory of primitive coherent configurations (PCCs) is to study permutation groups through combinatorics,
    then one of the landmark results of the field must be Babai's 1981 paper \cite{babai-uniprimitive} in which the following theorem was proved.
    
    \begin{theorem}[Babai]
%        \label{thm:babai}
        If $\X$ is a nontrivial PCC of degree $n$ then
        \[
            |\Aut(\X)| \leq \exp(4 n^{1/2} (\log n)^2).
        \]
    \end{theorem}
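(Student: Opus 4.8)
The plan is to follow Babai's original argument, whose engine is that $|\Aut(\X)|$ is governed by the size of a \emph{distinguishing set}: a set $S\subseteq V$ such that individualizing the points of $S$ and passing to the coherent closure yields the discrete configuration. If $|S|=s$ then any $g\in\Aut(\X)$ fixing $S$ pointwise fixes this canonical refinement, hence fixes every vertex, so $\Aut(\X)_{(S)}=1$ and $|\Aut(\X)|\le n(n-1)\cdots(n-s+1)<n^{s}=\exp(s\log n)$. Thus it suffices to produce a distinguishing set of size $s\le 4n^{1/2}\log n$. I stress that this is purely combinatorial and assumes no transitivity of $\Aut(\X)$: a large automorphism group is \emph{forced} to come from a primitive $\X$ having no small distinguishing set, which is precisely what primitivity will forbid.

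For this I would pass to the \emph{separation number} $D$, the minimum over distinct $u,v\in V$ of the number of $x\in V$ for which $(x,u)$ and $(x,v)$ lie in different relations of $\X$. A set $S$ separating every pair in this sense is automatically a distinguishing set, since after individualizing $S$ the coherent closure already resolves each vertex $u$ by its profile $\big(\text{relation of }(s,u)\big)_{s\in S}$, and distinct vertices get distinct profiles. If $D$ is large such an $S$ is small: a uniformly random $S$ of size $\lceil (2n\log n)/D\rceil$ meets the $(\ge D)$-element separator of each of the $\binom{n}{2}$ pairs with positive probability. So it is enough to prove $D\ge c\,n^{1/2}$ for a suitable absolute constant $c>0$; routine bookkeeping then turns this into the stated $4n^{1/2}\log^{2}n$ in the exponent. (Taking $v=u^{g}$ shows equivalently that every nonidentity automorphism moves at least $c\,n^{1/2}$ points, i.e.\ that $\Aut(\X)$ has large minimal degree --- the form in which the theorem is usually stated.)

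The lower bound on $D$ is where the combinatorics of $\X$ enters, via a carefully chosen constituent graph. Since $\X$ is nontrivial it has rank $\ge 3$, so there is a non-diagonal relation $R$ of minimal valency; let $k$ be that valency and $\Gamma=R\cup R^{*}$, a symmetric graph of valency between $k$ and $2k$ (if $k=1$ then $\Gamma$ is a cycle and $|\Aut(\X)|\le 2n$, so assume $k\ge 2$). By primitivity $\Gamma$ is connected; and since the least length of a $\Gamma$-walk joining two vertices is detected by the structure constants and so depends only on the relation of the pair, each distance sphere $\Gamma_{i}(v)$ with $1\le i\le\diam(\Gamma)$ is a union of relation-neighborhoods of $v$, hence has $\ge k$ vertices. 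Consequently $\diam(\Gamma)<n/k$, the distance partition from any vertex has $<n/k$ classes, and $\X$ has $\le 1+(n-1)/k$ relations. Now take distinct $u,v$ attaining $D$, with separator $\Delta$ (so $u,v\in\Delta$, $|\Delta|=D$); for $x\notin\Delta$ the relations, hence the $\Gamma$-distances, of $(x,u)$ and $(x,v)$ agree, forcing $\Gamma_{i}(u)\bigtriangleup\Gamma_{i}(v)\subseteq\Delta$ for every $i$. One then contradicts $D<c\,n^{1/2}$ by a dichotomy on $k$ about a threshold of order $n^{1/2}$: for large $k$ the large valency and small diameter ($<n^{1/2}$) of $\Gamma$ make such a near-coincidence of the two distance decompositions impossible, while for small $k$ one propagates the near-twin relation outward through the sparse connected graph $\Gamma$ --- its structure constants counting common neighbors being $\le 2k$ --- until the small set $\Delta$ is forced to spread over the whole graph, contradicting primitivity.

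The main obstacle is this dichotomy, especially the small-$k$ propagation: the cheap substitutes for a distinguishing set --- a breadth-first ball about a vertex, or a bound through $\log_{2}|\Aut(\X)|$ --- are circular or far too weak, and making the propagation go through is the delicate heart of Babai's paper. Calibrating the two branches so they meet near $k\asymp n^{1/2}$, and keeping the logarithmic losses so that they appear only squared in the final exponent, is fiddly but routine once the separation bound is established; the reduction to a distinguishing set, the random separator, and the elementary geometry of $\Gamma$ are all straightforward.
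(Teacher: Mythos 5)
First, a point of reference: the paper does not prove this theorem at all --- it is quoted verbatim from Babai's 1981 paper \cite{babai-uniprimitive} as known background, so there is no internal proof to compare against. Judged on its own terms, your proposal correctly reconstructs the \emph{outer shell} of Babai's argument: the reduction of $|\Aut(\X)|$ to the size of a distinguishing set, the observation that the separation (distinguishing) number $D(u,v)$ depends only on the color of $(u,v)$ and that a uniformly random set of size about $(2n\log n)/D$ distinguishes all pairs with positive probability, and the resulting target inequality $D \geq c\,n^{1/2}$, which yields the exponent $4n^{1/2}(\log n)^2$. These reductions are sound and are indeed how the theorem is derived from the combinatorial core.

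The genuine gap is that the combinatorial core itself --- the lower bound $D \geq c\,n^{1/2}$ for every nontrivial PCC --- is not proved. Your third paragraph gestures at a dichotomy on the minimal valency $k$ around the threshold $n^{1/2}$, but both branches are left as assertions: for large $k$ you claim the near-coincidence of the two distance partitions is ``impossible'' without exhibiting the counting that makes it so, and for small $k$ you invoke a ``propagation'' of the near-twin relation through the constituent graph without specifying the mechanism or why it terminates in a contradiction with primitivity rather than stalling (a small symmetric difference of neighborhoods does not by itself propagate along a sparse graph). You candidly flag this as ``the delicate heart of Babai's paper,'' and that is exactly right: essentially all of the mathematical content of the theorem lives in that lemma, and Babai's actual argument for it is a rather intricate analysis of the intersection numbers, not a routine consequence of the diameter and valency bookkeeping you set up. One smaller point: nontriviality (rank $\geq 3$) must be used explicitly somewhere, since for the rank-$2$ scheme one has $D = 2$ and the conclusion fails; in your sketch it enters only implicitly through the choice of a minimal-valency constituent. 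As it stands, the proposal is a correct plan of attack with the decisive step missing.
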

    
    In particular, if $G$ is a primitive permutation group of degree $n$, and not 2-transitive, then $\X(G)$ is a nontrivial PCC and $G \leq \Aut(\X(G))$, so
    \begin{equation}
        \label{eq:uniprimitive-bound}
        |G| \leq \exp(4 n^{1/2} (\log n)^2).
    \end{equation}
    The examples $\Sym(m) \wr C_2 \leq \Sym(m^2)$ and $\Sym(m)^{(2)} \leq \Sym(\binom{m}{2})$ show that \eqref{eq:uniprimitive-bound} cannot be improved by more than a log factor in the exponent.
    Moreover, it was proved by Babai and Pyber~\cites{babai-2-trans,pyber-2-trans} using related ideas that if $G$ is 2-transitive, and neither $\Sym(n)$ itself nor $\Alt(n)$, then
    \[
        |G| \leq \exp O((\log n)^3).
    \]
    Together these results solve one of the central problems of 19th century group theory
    -- bounding the order of a  primitive permutation group --
    by purely combinatorial methods.
    
    On the other hand, the classification of finite simple groups (CFSG) combined with the O'Nan--Scott theorem gives precise information, as in the following theorem of Cameron~\cite{cameron} (see also Liebeck~\cite{liebeck}, Liebeck--Saxl~\cite{liebeck--saxl}, and Mar\'oti~\cite{maroti}).
    
    \pagebreak[2]
    \begin{theorem}[Cameron, CFSG]
        \label{thm:cameron}
        If $G \leq \Sym(n)$ is primitive then either
        \begin{enumerate}
            \item there are integers $m, k, d$ such that $n = \binom{m}{k}^d$ and (up to conjugation)
            \[
                (\Alt(m)^{(k)})^d \leq G \leq \Sym(m)^{(k)} \wr \Sym(d),
            \]
            where the wreath product has the product action on $d$-tuples of $k$-sets
            or
            \item $|G| \leq \exp O( (\log n)^2)$.
        \end{enumerate}
    \end{theorem}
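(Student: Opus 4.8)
I would derive \Cref{thm:cameron} by combining the O'Nan--Scott theorem with CFSG; in contrast to Babai's theorem, this argument is not elementary. Let $G \leq \Sym(V)$ be primitive of degree $n = |V|$ with socle $N = T^{\ell}$ for some simple group $T$. The O'Nan--Scott theorem puts $G$ into one of five types: \emph{affine} ($N = (\Z/p)^{e}$ regular, $n = p^{e}$, $G \leq \opr{AGL}(e,p)$); \emph{diagonal} ($\ell \geq 2$, $n = |T|^{\ell-1}$, $G \leq T^{\ell}.(\opr{Out}(T) \times \Sym(\ell))$); \emph{twisted wreath} ($N$ regular, $n = |T|^{\ell}$, $|G| \leq |T|^{\ell}\,\ell!$); \emph{almost simple} ($\ell = 1$, $T \leq G \leq \Aut(T)$); and \emph{product action} ($\ell \geq 2$, $V = \Delta^{\ell}$, $G \leq H \wr \Sym(\ell)$ in the product action for some primitive non-regular $H \leq \Sym(\Delta)$ of almost simple or diagonal type, $n = |\Delta|^{\ell}$). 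The plan is to dispatch the first three types by crude order estimates, to settle the almost simple type using CFSG, and then to reduce the product action type to the almost simple one.

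For the affine type, $e = \log_{p} n \leq \log_{2} n$, so $|G| \leq p^{e}|\GL(e,p)| < n \cdot p^{e^{2}} = n^{1+e} = \exp O((\log n)^{2})$. For the diagonal type, $|\opr{Out}(T)| \leq |T|$ (a byproduct of CFSG) and $\ell \leq 1 + \log_{60} n$, so $|G| \leq |T|^{\ell+1}\ell!$ and, since $\log|T| = (\log n)/(\ell-1)$, we find $\log|G| = O(\log n) + O(\ell\log\ell) = O(\log n\log\log n)$; the twisted wreath type is the same with $\ell$ in place of $\ell - 1$. All three are absorbed by alternative (2), with room to spare.

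The substance of the proof, and the only place CFSG is indispensable, is the almost simple type: $T = \opr{soc}(G)$ is a nonabelian simple group acting primitively on $V$. If $T = \Alt(m)$ and $G$ acts on the set $\binom{[m]}{k}$ of $k$-subsets of $[m]$, then -- using $\Aut(\Alt(m)) = \Sym(m)$ for $m \neq 6$, with $m = 6$ contributing only $|G| = O(1)$ -- one has $\Alt(m)^{(k)} \leq G \leq \Sym(m)^{(k)}$, which is alternative (1) with $d = 1$. I claim that in every other almost simple case $|G| \leq \exp O((\log n)^{2})$, so that alternative (2) holds. Since $|G| \leq |\Aut(T)|$, it is enough to compare $|\Aut(T)|$ with a lower bound for the minimal faithful primitive degree of $T$, which bounds $n$ from below. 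For $T$ sporadic, $|\Aut(T)| = O(1)$. For $T = \Alt(m)$ acting primitively but not on $k$-subsets, the smallest such degree -- that of an action on uniform partitions -- is $2^{m - O(\log m)}$, so $m = O(\log n)$ and $|G| \leq m! = \exp O(m\log m) = \exp O((\log n)^{2})$. For $T$ of Lie type of rank $r$ over $\F_{q}$, Aschbacher's classification of maximal subgroups of classical groups (with the analogous results for exceptional groups) gives minimal primitive degree $\geq q^{cr}$, while $|\Aut(T)| \leq q^{Cr^{2}}$; since $r = O(1)$ for exceptional types this gives $\log|G| = O(\log q) = O(\log n)$, and for classical types $\log|G| = O(r^{2}\log q) = O((r\log q)^{2}) = O((\log n)^{2})$. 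I expect this to be the step demanding the most care -- in making the constants uniform across the classical and exceptional families, and in correctly enumerating the primitive actions of $\Alt(m)$ and $\Sym(m)$.

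Finally, in the product action type the base $H$ is primitive of degree $m_{0} := n^{1/\ell} \geq 5$, non-regular, and of almost simple or diagonal type, with $\opr{soc}(G) = \opr{soc}(H)^{\ell}$ and $G \leq H \wr \Sym(\ell)$. If $H$ is of diagonal type, the diagonal estimate above gives $\log|H| = O((\log m_{0})^{2})$; if $H$ is almost simple, the previous paragraph gives either $\log|H| = O((\log m_{0})^{2})$ or $\Alt(t)^{(k)} \leq H \leq \Sym(t)^{(k)}$ acting on $k$-subsets of $[t]$ for some $t, k$. In the former case $|G| \leq |H|^{\ell}\ell!$, and since $\ell \leq \log_{2} n$,
\[
    \log|G| = O\br{\ell(\log m_{0})^{2}} + O(\ell\log\ell) = O\br{(\log n)^{2}/\ell} + O(\log n\log\log n) = O\br{(\log n)^{2}},
\]
so alternative (2) holds. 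In the latter case $\opr{soc}(G) = (\Alt(t)^{(k)})^{\ell}$, and hence
\[
    (\Alt(t)^{(k)})^{\ell} \leq G \leq \Sym(t)^{(k)} \wr \Sym(\ell)
\]
in the product action on $\ell$-tuples of $k$-subsets of $[t]$, with $n = \binom{t}{k}^{\ell}$; this is precisely alternative (1) with $(m, k, d) = (t, k, \ell)$. Once the five types are exhausted, the theorem follows.
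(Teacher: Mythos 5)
The paper does not prove \Cref{thm:cameron}; it is quoted as a known consequence of CFSG with references to Cameron, Liebeck, Liebeck--Saxl, and Mar\'oti, so there is no internal proof to compare against. Your sketch is the standard argument from those sources --- O'Nan--Scott to split into types, crude order estimates for the affine, diagonal, and twisted wreath types, CFSG-dependent lower bounds on minimal faithful primitive degrees in the almost simple case, and a reduction of the product action type to its base --- and it is correct as a roadmap, with the arithmetic in each branch checking out (indeed the non-subset cases all come in under $\exp O((\log n)^2)$, and the sharper bounds of Cameron and Mar\'oti are not needed for the statement as given). One small omission: for $T = \Alt(m)$ almost simple, the primitive actions other than on $k$-subsets are not only the actions on uniform partitions but also actions on cosets of primitive maximal subgroups; these are handled by the Praeger--Saxl/Babai bound that a primitive group of degree $m$ not containing $\Alt(m)$ has order at most $4^m$, so such actions have degree at least $m!/4^m = \exp\Omega(m \log m)$ and cause no trouble. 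As you anticipate, the genuinely hard content is concentrated in the minimal-degree lower bounds for groups of Lie type, which is exactly where CFSG enters.
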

    
    The groups $G$ as in \Cref{thm:cameron}(1) are called \emph{Cameron groups}. The corresponding orbital schemes $\X(G)$ are precisely the Camerons schemes as defined in the previous section.
    
    While \Cref{thm:cameron} is a satisfying description of all the largest primitive permutation groups,
    it would be preferable to have a proof which is intelligible to mortals.
    The theory of coherent configurations, as a combinatorial generalization of group theory,
    provides an opportunity to make this desire precise.
    In this spirit the following conjecture has been advanced by Babai~\cite{babai-ICM2018}*{Conjecture~12.1} as part of the ``symmetry vs regularity'' project.
    
    \begin{conjecture}[Babai]
        \label{conj:babai}
            Let $\X$ be a PCC of degree $n$. Then either
            \begin{enumerate}
                \item $\X$ is a Cameron scheme or
                \item $|\Aut(\X)| \leq \exp O((\log n)^{O(1)})$.
            \end{enumerate}
    \end{conjecture}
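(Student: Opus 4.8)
The natural plan is to split on whether $\X$ is schurian. If $\X$ is schurian then $\X = \X(G)$ with $G = \Aut(\X)$ primitive, and \Cref{thm:cameron} applies essentially verbatim: either $G$ is a Cameron group, whence $\X = \X(G)$ is a Cameron scheme and we are in case~(1), or $|\Aut(\X)| = |G| \leq \exp O((\log n)^2)$ and we are in case~(2). So for schurian $\X$ the conjecture is a formal consequence of Cameron's theorem; in keeping with Babai's ``symmetry vs regularity'' program one would ideally also want a CFSG-free combinatorial proof of this case, of the kind that produced Babai's $\exp(4 n^{1/2} (\log n)^2)$ bound, but this is not required for the statement as written.

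The substantive case is nonschurian $\X$, where one must show $|\Aut(\X)| \leq \exp O((\log n)^{O(1)})$. The strategy would be to push the combinatorial machinery behind Babai's theorem --- individualization and refinement, bounds on $|\Aut(\X)|$ in terms of the size of a distinguishing set or base, and spectral and clique-geometry control of the color graphs --- far enough to show that any PCC with more than a quasipolynomial number of automorphisms is so rigid that it must coincide with one of a short list of highly symmetric schemes, all of which turn out to be schurian. Concretely one would: (i) reduce to the symmetric case, i.e.\ to association schemes, an asymmetric relation being expected to force $|\Aut(\X)|$ small; (ii) invoke and sharpen the existing classification results (Sun--Wilmes, Kivva) for PCCs with $|\Aut(\X)| \geq \exp(n^{\eps})$, which already force $\X$ to carry a Johnson or Hamming geometry; and (iii) argue that a scheme carrying such a geometry admits a base of quasipolynomial size unless it coincides with the corresponding full Cameron scheme.

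The crux --- and the point at which I expect the argument to break --- is step~(iii): controlling the ``intermediate'' schemes that are combinatorially Hamming- or Johnson-like but sit strictly between a tensor power $\T_m^{\otimes d}$ of a trivial (or Johnson) scheme and the corresponding full Cameron scheme in the fusion lattice. Such a scheme inherits a large automorphism group from its neighbours in that lattice, and once $\X$ is nonschurian there is no group-theoretic mechanism left to force it to be one of the named examples; the individualization count that succeeds for genuinely primitive-group-like schemes need not apply. This is exactly the gap the rest of the paper exploits: the Hamming sandwiches between $\T_m^{\otimes d}$ and $\Ham{m}{d}$ are nonschurian PCCs with $|\Aut(\X)| \geq \exp(n^{1/8})$, so the conjecture as stated is \emph{false}, and the honest outcome of this proof attempt is the realization that case~(1) must be widened --- from ``Cameron scheme'' to ``scheme sandwiched between a tensor power of a Johnson scheme and the corresponding full Cameron scheme'' --- before any such classification can be made to go through.
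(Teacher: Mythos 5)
There is nothing in the paper to compare your attempt against, because the statement is not a theorem of the paper: it is Babai's conjecture, and the central result of the paper is that it is \emph{false}. Your proposal correctly recognizes this. The schurian half of your argument is sound and is exactly how one would argue: if $\X = \X(\Aut(\X))$ with $\Aut(\X)$ primitive, then \Cref{thm:cameron} gives either a Cameron group (hence a Cameron scheme) or a quasipolynomial bound. And your diagnosis of where the nonschurian half breaks --- the ``intermediate'' fusion schemes sitting strictly between $\T_m^d$ and $\Ham{m}{d}$, which inherit $\Sym(m)^d \leq \Aut(\X)$ from below but need not be schurian --- is precisely the mechanism the paper exploits. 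Concretely, \Cref{thm:main} produces, for every $m \geq 3$, a primitive association scheme $\X = [m]^{\S_5}$ of degree $n = m^8$ and rank $28$ with $\T_m^8 \leq \X \leq \Ham{m}{8}$, $\Aut(\X) = \Sym(m) \wr \Aut(\S_5)$ imprimitive, and $|\Aut(\X)| = 8\,m!^8 \geq \exp(n^{1/8})$; being nonschurian it is not a Cameron scheme, so neither alternative of the conjecture holds. Your proposed repair --- widening case (1) to schemes sandwiched between a tensor power of a Johnson scheme and the full Cameron scheme --- is exactly the paper's \Cref{conj:sand}. The only thing to add is that the existence of such nonschurian sandwiches is not automatic: it reduces, via \Cref{lem:hamming-sandwiches}, to the existence of a nonschurian set association scheme, which requires degree $d = 8$ (\Cref{prop:nonschurian-enumeration}) and a genuine construction ($\S_5$, built from the modular $16$-group). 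So the honest verdict is that your attempt is not, and could not be, a proof; it is a correct analysis ending in the correct conclusion that the statement must be refuted and revised.
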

    
    Essentially this conjecture has been repeated many times, sometimes faithfully~\cites{kivva-arxiv,sun-wilmes,wilmes-thesis}
    and sometimes in a weaker form either only predicting the primitivity of $\Aut(\X)$~\cites{babai-srg-1,babai-srg-2,sun-thesis,sun-wilmes-abs}
    or treating the minimal degree or thickness of $\Aut(\X)$ as more basic than its order~\cites{kivva1,kivva-arxiv}.
    Here the \emph{minimal degree} $\mu(G)$ of a permutation group $G$ is the minimal number of points moved by some nontrivial element, while the \emph{thickness} $\theta(G)$ is the largest $t$ such that $G$ has a section isomorphic to $\Alt(t)$.
        
    \begin{conjecture}[Babai]
        \label{conj:babai2}
        Let $\X$ be a PCC of degree $n$. Then either
        \begin{enumerate}
            \item $\Aut(\X)$ is primitive or
            \item $|\Aut(\X)| \leq \exp O((\log n)^{O(1)})$.
        \end{enumerate}
    \end{conjecture}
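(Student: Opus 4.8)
The plan is to dichotomize on whether $\X$ is schurian, using the fact recorded above that $\X$ is schurian precisely when $\X = \X(\Aut(\X))$. If $\X$ is schurian, then $\X = \X(\Aut(\X))$ and $\X$ is primitive as a coherent configuration, so by the observation that a permutation group is primitive exactly when its orbital configuration is primitive, the group $\Aut(\X)$ is primitive and conclusion~(1) holds outright. The schurian case therefore imposes no bound at all, and the entire burden of the conjecture falls on nonschurian $\X$; this is where I would concentrate all the effort.

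So assume $\X$ is nonschurian and argue by contrapositive: supposing $\Aut(\X)$ is not primitive, I would try to prove $|\Aut(\X)| \leq \exp O((\log n)^{O(1)})$. Write $G = \Aut(\X)$ and form the orbital refinement $\X' = \X(G)$; since $G$ preserves each relation of $\X$, every $G$-orbit on $V^2$ lies in a single relation, so $\X'$ is a fission of $\X$ and $G \leq \Aut(\X')$. Here ``not primitive'' means $G$ is either intransitive or transitive but imprimitive. The intransitive case I would dispatch separately, expecting it to yield a smallness bound directly, since a group acting with several orbits on the vertices of a primitive scheme is severely constrained. In the transitive-imprimitive case, fix a minimal $G$-invariant block system $\mathcal B$ with block size $b$ and $c = n/b$ blocks, and split $|G| = |\bar G|\,|K|$ via the action $\bar G \leq \Sym(c)$ on blocks and its kernel $K \nsgp G$ acting within blocks. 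The point to keep in view is that $\mathcal B$ is \emph{invisible} in $\X$ itself, since otherwise $\X$ would be an imprimitive coherent configuration; yet $\mathcal B$ does organize the finer relations of the schurian $\X' = \X(G)$, which in this subcase is genuinely imprimitive.

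The crux is then to convert the primitivity of $\X$ into control over $|G|$, and I would attempt this by combining two external bounds with the sandwich $\X' \leq \X$. Babai's theorem above already gives $|G| \leq \exp(4 n^{1/2}(\log n)^2)$ for any nontrivial PCC, while \Cref{thm:cameron} pins down every large primitive group as a Cameron group. The aim would be to show that the connected, regular constituent graphs of the primitive $\X$ knit the blocks of $\mathcal B$ together so rigidly that the induced actions $\bar G$ on blocks and $K$ within blocks are each forced to be either quasipolynomially small or of Cameron type, and that a Cameron-type factor is incompatible with $G = \Aut(\X)$ being imprimitive while $\X$ remains primitive. If that incompatibility could be established, the two subcases would collapse to the desired bound.

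The main obstacle --- and I expect it to be the decisive, possibly fatal, one --- is exactly this conversion step in the nonschurian imprimitive case. There is no a priori combinatorial mechanism forcing the hidden block system of $G$ to surface among the relations of $\X$: the defining tension ``$\X$ primitive but $\Aut(\X)$ imprimitive'' is internally consistent, and nothing in coherence alone prevents a primitive scheme from acquiring a large imprimitive automorphism group through relations coarser than the orbitals of $G$. I would therefore expect the honest outcome of pressing this argument to be not a proof but a sharp localization of the obstruction --- precisely the configuration of a primitive $\X$ with imprimitive, superquasipolynomial $\Aut(\X)$ that the conjecture forbids --- and it is this obstruction that must be resolved, one way or the other, before \Cref{conj:babai2} can be settled.
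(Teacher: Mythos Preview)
The statement you are attempting to prove is a \emph{conjecture}, and the paper does not prove it --- it \emph{refutes} it. \Cref{thm:main} constructs, for every $m \ge 3$, a primitive association scheme $\X$ of degree $n = m^8$ with $\Aut(\X)$ imprimitive and $|\Aut(\X)| = 8\,m!^8 \ge \exp(n^{1/8})$, which is far above any quasipolynomial bound. So there is no proof to compare your proposal against; the conjecture is simply false.

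That said, your analysis correctly locates where the argument must break. The schurian case is indeed trivial (if $\X = \X(\Aut(\X))$ is primitive then $\Aut(\X)$ is primitive), and the entire content lies in the nonschurian imprimitive case. Your closing paragraph is exactly right: there is no combinatorial mechanism forcing the hidden block system of $G = \Aut(\X)$ to surface in $\X$, and ``$\X$ primitive but $\Aut(\X)$ imprimitive'' is internally consistent. The paper realizes precisely this obstruction. The counterexample is a Hamming sandwich $\X = [m]^{\S_5}$ sitting between $\T_m^8$ and $\Ham{m}{8}$, where $\S_5$ is a certain nonschurian set association scheme of degree $8$ whose automorphism group $C_4 \times C_2$ is intransitive on $[8]$; by \Cref{lem:prop-correspondence}, $\X$ is primitive while $\Aut(\X) = \Sym(m) \wr \Aut(\S_5)$ is imprimitive. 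The block system you would be looking for is the product decomposition $[m]^8$, and it is genuinely invisible in $\X$ because the relations of $\X$ fuse the $\T_m^8$-relations according to $\S_5$ rather than according to any group action. So the ``incompatibility'' you hoped to establish --- that a Cameron-type factor cannot coexist with $\X$ primitive and $\Aut(\X)$ imprimitive --- fails, and your honest expectation that the argument would localize rather than resolve the obstruction is vindicated.
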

    
    \begin{conjecture}[Babai]
        \label{conj:babai3}
        Let $\X$ be a PCC of degree $n$. Then either $\X$ is a Cameron scheme or
        \begin{enumerate}
            \item $\mu(\Aut(\X)) \ge cn$ for some constant $c > 0$ and
            \item $\theta(\Aut(\X)) \le (\log n)^{O(1)}$.
        \end{enumerate}
    \end{conjecture}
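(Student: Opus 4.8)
The plan is to establish the two conclusions in tandem, splitting on whether $\X$ is schurian. Suppose first that $\X$ is schurian, so $\X = \X(G)$ for $G := \Aut(\X)$, and since $\X$ is primitive, $G$ is a primitive permutation group of degree $n$. I would then apply \Cref{thm:cameron}: either $G$ is a Cameron group, whence $\X = \X(G)$ is a Cameron scheme and we are in the first alternative, or $|G| \le \exp O((\log n)^2)$. In the latter case conclusion (2) is immediate, since an $\Alt(t)$ section of $G$ forces $t!/2 \le |G|$ and hence $t \le (\log n)^{O(1)}$. For conclusion (1) I would invoke the CFSG-based classification of primitive groups of small minimal degree (Liebeck--Saxl~\cite{liebeck--saxl} and related work): a primitive group that is not a Cameron group satisfies $\mu(G) \ge cn$ for an absolute constant $c > 0$. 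This settles the schurian case using only known group theory.

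The substance of the conjecture is the nonschurian case, where $\Aut(\X)$ is only a group of automorphisms whose action on $V$ need not be primitive; recall that the equivalence ``$\X$ primitive $\iff \Aut(\X)$ primitive'' is guaranteed only in the schurian setting. Here I cannot feed $\X$ directly to \Cref{thm:cameron}, so I would try to establish both conclusions combinatorially, in the CFSG-free spirit of Babai's uniprimitive bound. For the minimal degree I would argue by contradiction: given a nontrivial $\sigma \in \Aut(\X)$ fixing a set $F$ with $|F| > (1-c)n$, the coherence and regularity of the color graphs, combined with the primitivity of $\X$, should force the structure induced on and around $F$ to resemble a Johnson or Hamming scheme, and the goal is to bootstrap this local symmetry into a global identification of $\X$ as a Cameron scheme. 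For the thickness I would take a section $\Alt(t) \le \Aut(\X)$ and attempt to show that a large alternating section must act on some $\X$-definable substructure as $\Alt(m)^{(k)}$ acts on $k$-sets, again forcing Cameron structure once $t$ exceeds $(\log n)^{O(1)}$.

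The hard part will be exactly this nonschurian analysis. The danger is that large symmetry of $\Aut(\X)$ can arise from the symmetry of a \emph{tensor} substructure -- for instance the base group $\Sym(m)^d$ inside $\Sym(m) \wr \Sym(d)$ acting on $[m]^d$ -- which simultaneously produces alternating sections of rank $\sim m = n^{1/d}$ (threatening the hoped-for thickness bound) and elements of support $\sim n/m$ (threatening the hoped-for minimal-degree bound), all while the ambient configuration is merely a fusion of the tensor power $\T_m^d$ rather than a genuine Cameron scheme in the sense of \Cref{thm:cameron}(1). Ruling out such imprimitive, tensor-type symmetry is the crux: the ``local-to-global'' step works cleanly for Johnson schemes because of their primitivity, but has no obvious analogue when the symmetry factors through a wreath product $\Sym(m) \wr \Sym(d)$. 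I would try to close the gap by classifying the primitive coherent configurations sandwiched between $\T_m^d$ and $\Ham{m}{d}$, aiming to show that any such scheme with a large automorphism group is already a Cameron scheme. That classification is where I expect both the real work and the real risk to lie.
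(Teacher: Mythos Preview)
The statement you are attempting to prove is a \emph{conjecture}, not a theorem, and the paper does not prove it --- it \emph{refutes} it. \Cref{thm:main} exhibits, for every $m \ge 3$, a primitive association scheme $\X$ of degree $n = m^8$ which is not a Cameron scheme (indeed, nonschurian with imprimitive $\Aut(\X)$) yet has $\mu(\Aut(\X)) = 2n^{7/8}$ and $\theta(\Aut(\X)) = n^{1/8}$. Both conclusions (1) and (2) fail for this $\X$, so no proof of \Cref{conj:babai3} can exist.

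What is notable is that you correctly isolated the point of failure. The ``danger'' you describe --- a fusion of $\T_m^d$ with a base group $\Sym(m)^d$ supplying alternating sections of rank $m = n^{1/d}$ and elements of support $\sim n/m$ --- is precisely the mechanism the paper exploits. Your proposed remedy, to classify primitive coherent configurations $\X$ with $\T_m^d \le \X \le \Ham{m}{d}$ and show that those with large automorphism group are Cameron schemes, is exactly what \emph{cannot} be done: the paper's construction $[m]^{\S_5}$ (with $\S_5$ the homogeneous, vertex-intransitive set association scheme of degree $8$ from \Cref{table:nonschurian8}) is such a sandwich, is primitive, has $\Aut(\X) = \Sym(m) \wr (C_4 \times C_2)$ of order $8\,m!^8$, and is not a Cameron scheme. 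So the classification you hoped for yields counterexamples rather than a proof; the correct response is to enlarge the exceptional class from Cameron schemes to Cameron \emph{sandwiches}, as in \Cref{conj:sand}.
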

    
    \Cref{conj:babai} is true for $|\Aut(\X)| > \exp(n^{1/3} (\log n)^C)$ by a result of Sun and Wilmes~\cite{sun-wilmes} (in this case, the only possibilities for $\X$ are the strongly regular graphs $\Ham{m}{2}$ and $\J(m, 2)$).
    \Cref{conj:babai3} is known in the bounded-rank distance-regular case by a result of Kivva~\cite{kivva1}, and in the case of PCCs of rank at most $4$ by \cites{babai-srg-1, kivva-arxiv}.
    
    \subsection{Sandwiches}
    
    The first purpose of this paper is to refute \Cref{conj:babai} as stated.
    However, the spirit of the conjecture is still reasonable,
    so the second purpose is to suggest a revision
    with similar consequences,
    and to develop some of the theory of the new larger class of exceptions.
    
    \begin{theorem}
        \label{thm:main}
        For every $m \geq 3$ there is a primitive association scheme $\X$ of degree $n = m^8$ and rank $28$ such that
        $\Aut(\X)$ is imprimitive and $|\Aut(\X)| = 8m!^8 \geq \exp(n^{1/8})$.
        Also $\mu(\Aut(\X)) = 2m^7 = 2n^{7/8}$ and $\theta(\Aut(\X)) = m = n^{1/8}$.
    \end{theorem}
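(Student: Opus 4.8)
The plan is to realise $\X$ explicitly as a coherent fusion of the tensor power $\T_m^{\otimes 8}$. Recall that $\T_m^{\otimes 8}$ has the $2^8$ relations $R_S = \{(x,y) \in ([m]^8)^2 : \{i : x_i = y_i\} = S\}$ for $S \subseteq \{1,\dots,8\}$, and that a Hamming sandwich is precisely what one obtains by grouping the $R_S$ into colour classes — each class a union of relations $R_S$ with $|S|$ fixed — so that the resulting partition of $([m]^8)^2$ is coherent. We take the particular grouping into $28$ classes produced by our exhaustive analysis of $8$‑dimensional Hamming sandwiches. It is invariant under a subgroup $G \leq \Sym(8)$ of order $8$ whose orbits on $\{1,\dots,8\}$ are two $4$-sets, say $\{1,\dots,4\}$ and $\{5,\dots,8\}$, and it is strictly coarser than the orbits of $G$ on $2^{\{1,\dots,8\}}$. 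Since every $R_S$ is symmetric and $R_{\{1,\dots,8\}}$ is the diagonal, $\X$ is automatically homogeneous and symmetric, so ``primitive association scheme'' will follow from coherence together with connectedness of the $27$ non-identity colour graphs.

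First I would reduce coherence, primitivity, and the value $28$ of the rank to finite checks that essentially do not depend on $m$. The structure constants of $\T_m^{\otimes 8}$ factor over the eight coordinates, and each is an explicit monomial in $m-1$ and $m-2$ determined only by the coordinatewise incidence pattern of the triple $(S,T,U)$; hence for any two colour classes $S_\beta, S_\gamma$ the count $\#\{w : (u,w)\in S_\beta,\ (w,v)\in S_\gamma\}$ is a sum of such monomials depending only on the $\T_m^{\otimes 8}$-relation of $(u,v)$, and coherence says exactly that this sum is one and the same polynomial in $m$ as $(u,v)$ ranges over each colour class. For $d=8$ this is a finite system of polynomial identities, which the chosen partition satisfies; the same combinatorics reads off that there are $28$ classes and that each of the $27$ colour graphs other than the diagonal is connected (this last point needs $m\geq 3$, since an individual relation $R_S$ with $S\neq\emptyset$ is disconnected). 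This verification, carried out by computer, is the substantive content.

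Next I would compute $\Aut(\X)$. Every element of $\Sym(m)^8$ fixes each $R_S$, hence each colour class, and $G$ permutes the $R_S$ while preserving the $28$ classes by construction, so $\Sym(m)^8 \rtimes G \leq \Aut(\X)$. Conversely $\X \leq \Ham{m}{8}$, so $\Aut(\X) \leq \Aut(\Ham{m}{8}) = \Sym(m)\wr\Sym(8)$, and since $\Aut(\X) \supseteq \Sym(m)^8$ this forces $\Aut(\X) = \Sym(m)^8 \rtimes G'$, where $G' \leq \Sym(8)$ is the group of coordinate permutations fixing each of the $28$ classes setwise. The remaining point is the rigidity statement $G' = G$, i.e.\ that the $28$-class partition has coordinate-permutation stabiliser of order exactly $8$; this is again a finite check, and the delicate one, since a coherent fusion whose stabiliser were larger, or transitive on $\{1,\dots,8\}$, would be either a Cameron scheme or would have primitive automorphism group. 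Given $G'=G$, two consequences are immediate: $G$ is intransitive on $\{1,\dots,8\}$, so $\Sym(m)^8 \rtimes G$ preserves the product decomposition $[m]^8 = [m]^4 \times [m]^4$ and is imprimitive; and the $28$-class partition properly fuses the orbit partition of $G$, so $\X$ properly fuses $\X(\Aut(\X)) = \T_m \uparrow G$ and is in particular nonschurian.

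Finally one reads off the invariants of $H := \Aut(\X) = \Sym(m)^8 \rtimes G$. Clearly $|H| = |G|\,(m!)^8 = 8\,(m!)^8$, and $8\,(m!)^8 \geq \exp(m) = \exp(n^{1/8})$ since $\log(m!)\geq m/8$ for $m\geq 3$. For the minimal degree: a transposition acting in a single coordinate with trivial $G$-part moves exactly $2m^7$ points; any element of $H$ whose $G$-part is nontrivial fixes at most $m^7$ points and so moves at least $m^7(m-1)\geq 2m^7$ points (using $m\geq 3$); and a nontrivial element with trivial $G$-part fixes at most $(m-2)m^7$ points and so moves at least $2m^7$ points; hence $\mu(H) = 2m^7 = 2n^{7/8}$. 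For the thickness: if $t\geq 5$ and $\Alt(t)$ is a section of $H$, then since $G$ is solvable and $\Alt(t)$ simple, a short argument on the extension $1\to\Sym(m)^8\to H\to G\to 1$ shows $\Alt(t)$ is involved in $\Sym(m)^8$, hence in $\Sym(m)$, forcing $t\leq m$; combined with $\Alt(m)\leq\Sym(m)^8\leq H$ this gives $\theta(H)=m=n^{1/8}$ for $m\geq 4$, and the case $m=3$ follows from a direct check that $H$ has no section isomorphic to $\Alt(4)$. The \emph{main obstacle} is the computation of the second paragraph — exhibiting a partition of $2^{\{1,\dots,8\}}$ that is simultaneously coherent, connected (so that $\X$ is primitive), strictly coarser than the orbits of its stabiliser (so that $\X$ is nonschurian), and whose stabiliser has order exactly $8$ and is intransitive — which is why the argument is ultimately computer-assisted.
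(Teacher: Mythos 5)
Your proposal follows essentially the same route as the paper: the paper encodes the $28$-class partition as a ``set association scheme'' on $2^{[8]}$ (coherence with respect to triangle types, which is exactly your $m$-independent system of polynomial identities in $m-1$ and $m-2$), proves the dictionary $\Aut([m]^\S)=\Sym(m)\wr\Aut(\S)$ by the same modular-law argument inside $\Sym(m)^8 \leq \Aut(\X) \leq \Sym(m)\wr\Sym(8)$, and likewise defers coherence of the key partition to a computer check. The only real difference is that the paper exhibits the partition concretely --- it is obtained from the orbit partition of the modular group of order $16$ on $\Z/8\Z$ by splitting the cell of length-$4$ arithmetic progressions with odd common difference, yielding stabiliser $C_4\times C_2$ of order $8$ with the two orbits $\{0,2,4,6\}$ and $\{1,3,5,7\}$ --- whereas you only assert its existence; all the remaining deductions (rank $28$, primitivity, imprimitivity of $\Aut(\X)$, and the values of $\mu$ and $\theta$) match the paper's.
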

    
    Since $\Aut(\X)$ is imprimitive, $\X$ is in particular nonschurian and hence not a Cameron scheme, so this example contradicts \Cref{conj:babai}.
    It also contradicts the weaker variants \Cref{conj:babai2} and \Cref{conj:babai3}.
    (\Cref{thm:main} does not contradict the results quoted above on special cases of \Cref{conj:babai} and \Cref{conj:babai3}, since $|\Aut(\X)|$ is much smaller than $\exp(n^{1/3})$, $\X$ is not distance-regular, and $\rank(\X) = 28$.)
    
    However, $\X$ still closely resembles a Cameron scheme, apart from being nonschurian.
    In particular it is sandwiched between a tensor power of the trivial scheme and the Hamming scheme:
    \begin{equation}
        \label{eq:hamming-sandwich}
        \T_m^d \leq \X \leq \Ham{m}{d},
    \end{equation}
    where $d = 8$.
    We call any association scheme $\X$ obeying \eqref{eq:hamming-sandwich} for some $m, d \geq 1$ a \emph{Hamming sandwich}.
    The possibility that there could be such a sandwiched scheme which is \emph{nonschurian} has been overlooked.
    We will show that if $m$ is sufficiently large depending on $d$ then there is a bijective correspondence between Hamming sandwiches and certain partitions of $2^{[d]}$, which we call \emph{set association schemes}, a new class of combinatorial objects studied for the first time in this paper.
    The proof of \Cref{thm:main} is then completed by exhibiting a
    set association scheme of degree $d = 8$ with suitable properties.
    
    The structure of these examples suggests a revision of \Cref{conj:babai}.
    Let us call any association scheme $\X$ such that
    \[
        \J(m, k)^d \leq \X \leq \Cam(m, k, d)
    \]
    for some $m, k, d \geq 1$ a \emph{Cameron sandwich}.
    As for Hamming sandwiches, there is a bijective correspondence between Cameron sandwiches and partitions of $\{0, \dots, k\}^d$ obeying a certain coherence condition.
    All such schemes have large automorphism group, though they may not be schurian, which suggests the following revision of \Cref{conj:babai}, in which only ``Cameron scheme'' has been replaced with ``Cameron sandwich''.
    
    \pagebreak[2]
    \begin{conjecture}[sandwich conjecture]
        \label{conj:sand}
        Let $\X$ be a PCC of degree $n$. Then either
        \begin{enumerate}
            \item $\X$ is a Cameron sandwich or
            \item $|\Aut(\X)| \leq \exp O((\log n)^{O(1)})$.
        \end{enumerate}
    \end{conjecture}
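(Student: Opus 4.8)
Since \Cref{conj:sand} is open I can only offer a programme, organised around whether $\X$ is schurian. If $\X$ is schurian then $\X=\X(G)$ with $G=\Aut(\X)$, and $G$ is primitive since the configuration $\X=\X(G)$ is. Now \Cref{thm:cameron} applies directly: in its case~(1), $G$ lies between $(\Alt(m)^{(k)})^{d}$ and $\Sym(m)^{(k)}\wr\Sym(d)$ for some $m,k,d$, so $\X(G)$ is a Cameron scheme; and since $\J(m,k)^{d}\le\J(m,k)\uparrow H\le\J(m,k)\uparrow\Sym(d)=\Cam(m,k,d)$ for every $H\le\Sym(d)$, every Cameron scheme is a Cameron sandwich, which is alternative~(1). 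In case~(2), $|\Aut(\X)|\le\exp O((\log n)^{2})$, which is alternative~(2). So for schurian $\X$ the sandwich conjecture is exactly Cameron's theorem, and the genuinely new content lies entirely in the non-schurian case.

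For non-schurian $\X$ the plan is to prove a structural dichotomy at the level of automorphism groups, in the spirit of Babai's ``symmetry versus regularity'' programme: any PCC $\X$ of degree $n$ with $|\Aut(\X)|>\exp O((\log n)^{C})$ should admit an $\Aut(\X)$-invariant coordinatisation $V\cong W^{d}$ with $d\ge2$ -- arising from a block system of $\Aut(\X)$, as in the imprimitive examples of \Cref{thm:main}, or from a genuine tensor factorisation of $\X$ -- together with a Johnson scheme $\J(m,k)$ on $W$ for which $\J(m,k)^{d}\le\X$. Granting this, one finishes as for Hamming and Cameron sandwiches elsewhere in the paper: coherence forces the fusion $\X$ of $\J(m,k)^{d}$ to be compatible with the ``forget the order of the $d$ coordinates'' partition, with nothing finer available a priori than what $\Cam(m,k,d)$ already allows, so $\J(m,k)^{d}\le\X\le\Cam(m,k,d)$; the estimate $|\Aut(\X)|\ge|\Alt(m)^{(k)}|^{d}$ is then immediate. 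The lower inclusion $\J(m,k)^{d}\le\X$ is the heart of the matter; everything after it is the kind of bookkeeping carried out in the $d\le8$ analysis of this paper.

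The hard part is precisely this dichotomy in the regime $\exp O((\log n)^{O(1)})<|\Aut(\X)|\le\exp(n^{1/3})$ with $\rank(\X)$ unbounded -- the regime occupied by \Cref{thm:main}, where $|\Aut(\X)|=8m!^{8}$ and $\rank(\X)=28$. Above $\exp(n^{1/3}(\log n)^{C})$ the dichotomy follows from Sun--Wilmes~\cite{sun-wilmes} (who show $\X$ must be one of the strongly regular graphs $\Ham{m}{2}$ or $\J(m,2)$), and for bounded rank from Kivva~\cite{kivva1} and \cites{babai-srg-1,kivva-arxiv}; but each of those arguments proceeds by first proving $\X$ (nearly) schurian, which provably fails for the sandwiches built here. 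What is missing is a way to recover a product structure on $V$ from the mere \emph{size} of $\Aut(\X)$, with no schurity to lean on; pinning down the combinatorial targets -- set association schemes and Cameron sandwiches, as in this paper -- is meant as a first step toward that.
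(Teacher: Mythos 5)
This statement is a \emph{conjecture}; the paper offers no proof of it and explicitly records it as open except in the regime $|\Aut(\X)|>\exp(n^{1/3}(\log n)^{C})$ covered by Sun--Wilmes. Your proposal is honest about this, and the part you do prove is correct: if $\X$ is schurian then $G=\Aut(\X)$ is primitive, Cameron's theorem (via CFSG) puts $G$ in its case~(1) or bounds $|G|$ quasipolynomially, and in case~(1) one gets $\J(m,k)^{d}=\X((\Alt(m)^{(k)})^{d})\le\X(G)\le\X(\Sym(m)^{(k)}\wr\Sym(d))=\Cam(m,k,d)$ directly from the containments of groups, so $\X$ is a Cameron sandwich. (Two small caveats: for $m$ very small relative to $k$ the orbitals of $\Alt(m)^{(k)}$ need not match those of $\Sym(m)^{(k)}$, but then $|G|$ is quasipolynomial anyway; and since this route invokes CFSG, it forfeits the paper's stated motivation of obtaining a CFSG-free quasipolynomial Cameron theorem as a \emph{consequence} of the conjecture.)

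The genuine gap is the entire nonschurian case, which is where all the content of the conjecture lives. Your proposed dichotomy --- that $|\Aut(\X)|$ superquasipolynomial forces an invariant coordinatisation $V\cong W^{d}$ with $\J(m,k)^{d}\le\X$ --- is precisely the unproved heart of the matter, and you give no mechanism for extracting a product structure or a Johnson fission from the order of $\Aut(\X)$ alone. Moreover, even granting $\J(m,k)^{d}\le\X$, your claim that coherence then forces $\X\le\Cam(m,k,d)$ is not ``bookkeeping'': the paper's follow-up work \cite{followup} shows that a fusion of $\J(m,k)^{d}$ can instead collapse onto a Hamming sandwich over $\binom{m}{k}^{e}$ symbols for $e\mid d$, so the upper inclusion requires a real argument (those exceptional fusions happen to be Cameron sandwiches for different parameters, but that must be checked, not assumed). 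In short: the schurian half is fine, the nonschurian half is a research programme, and the conjecture remains open.
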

    
    If true, there are notable consequences.
    For example, since $\Cam(m, k, d)$ has rank $\binom{k+d}{k} > 3$ unless $k = d = 1$ or $\{k, d\} = \{1, 2\}$, it follows immediately that any nontrivial strongly regular graph has quasipolynomially many automorphisms,
    excepting only the constituents of $\Ham{m}{2}$ and $\J(m, 2)$.
    Moreover this extends to distance-regular graphs of bounded diameter $d \geq 3$, with the exception of Hamming and Johnson graphs, by the result of Kivva~\cite{kivva1}*{Theorem~1.11}.
    
    Similarly, it follows from \Cref{conj:sand} and a result of Wilmes~\cite{wilmes-thesis}*{Theorem~8.2.1} that
    if $G \leq \Sym(n)$ then either $G$ is a Cameron group or $|G|$ is at most quasipolynomial.
    The result of Wilmes does not depend on the CFSG, so a CFSG-free proof of \Cref{conj:sand} would entail a CFSG-free proof of a slightly weaker (but still quasipolynomial) version of Cameron's theorem.
    
    In another direction, \Cref{conj:sand} has consequences for the graph isomorphism problem (GI).
    Famously, Babai recently gave a quasipolynomial-time GI algorithm~\cite{babai-GI}.
    The validity of \Cref{conj:sand} implies that a comparatively simple GI algorithm already has quasipolynomial running time: see \cite{babai-GI}*{Remark~6.1.3}.
    
    We give two more applications of \Cref{conj:sand}, which will be proved in the body of the paper.
    The first result asserts that the exponent $1/8$ in \Cref{thm:main} is maximal.
    The second concerns the rank.
    
    \begin{proposition}
        \label{prop:nonschurian1/8}
        Assume \Cref{conj:sand}.
        Let $\X$ be a nonschurian PCC.
        Then
        \[
            |\Aut(\X)| \leq \exp O(n^{1/8} \log n).
        \]
    \end{proposition}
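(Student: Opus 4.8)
The plan is to run $\X$ through \Cref{conj:sand}. If the second alternative holds then $|\Aut(\X)| \le \exp O((\log n)^{O(1)})$, which is well below $\exp O(n^{1/8}\log n)$ once $n$ exceeds a fixed bound (and for $n$ below that bound $|\Aut(\X)| \le n!$ is absorbed into the $O$-constant). So we may assume $\X$ is a Cameron sandwich, say
\[
    \J(m,k)^{d} \;\le\; \X \;\le\; \Cam(m,k,d), \qquad n = \binom{m}{k}^{d},
\]
and, using $\J(m,k)\cong\J(m,m-k)$, we may take $m \ge 2k$; in particular $1 \le k \le m-1$, so $\binom{m}{k} \ge m \ge 2$.

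Next I would bound $|\Aut(\X)|$ from above. Since $\X \le \Cam(m,k,d)$, every automorphism of $\X$ preserves the coarser configuration $\Cam(m,k,d)$, so $\Aut(\X) \le \Aut(\Cam(m,k,d))$; and by classical results the automorphism group of the full Cameron scheme is the Cameron group $\Sym(m)^{(k)} \wr \Sym(d)$ up to a factor of at most $2^{d}$ (coming from coordinatewise complementations, and only when $m = 2k$), so $|\Aut(\X)| \le (2\, m!)^{d}\, d!$ and hence
\[
    \log|\Aut(\X)| \;\le\; d\,m\log m + d\log d + O(d) \;=\; O\!\br{d\,m\log m + d\log d}.
\]

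The decisive input is that $d \ge 8$, and this is precisely where the body of the paper is used. Under the correspondence established in the earlier sections, a nonschurian Cameron sandwich with parameters $(m,k,d)$ corresponds to a nonschurian combinatorial object on $\{0,\dots,k\}^{d}$ --- a nonschurian set association scheme of degree $d$ in the Hamming case $k=1$ --- and the exhaustive analysis carried out for $d \le 8$ shows that no such object exists with $d \le 7$; the minimal nonschurian example, the one underlying \Cref{thm:main}, occurs at $d = 8$.

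It remains to combine the two bounds. Since $\binom{m}{k} \ge m$ and $d \ge 8$ we have $n^{1/8} = \binom{m}{k}^{d/8} \ge m^{d/8} \ge m$, and since also $\binom{m}{k} \ge 2$ we have $n^{1/8} \ge 2^{d/8}$; together with $\log n = d\log\binom{m}{k} \ge d\log m$ this gives, after a routine estimate,
\[
    n^{1/8}\log n \;\ge\; d\,m^{d/8}\log m \;\ge\; c\br{d\,m\log m + d\log d}
\]
for an absolute constant $c > 0$, which is the assertion. The one genuine obstacle is supplying the input $d \ge 8$: the passage through \Cref{conj:sand} and the bookkeeping with binomial coefficients are routine, but ruling out \emph{nonschurian} Cameron sandwiches with $d \le 7$ --- uniformly over all $m$ and all $k$ --- is exactly the combinatorial heart of the paper.
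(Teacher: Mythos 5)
Your overall strategy (pass through \Cref{conj:sand}, bound $|\Aut(\X)|$ by $|\Aut(\Cam(m,k,d))|$, and use the nonschurian classification to force the parameters into the right range) is the paper's strategy, but the step you flag as the decisive input, namely $d \ge 8$, is not available in the form you state it, and the way you try to obtain it skips the one nontrivial reduction in the paper's proof.

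Two concrete problems. First, the combinatorial classification does not give $d \ge 8$: what is proved (\Cref{prop:kd<8}) is that every homogeneous vector association scheme with $kd < 8$ is schurian, so the conclusion is $kd \ge 8$, not $d \ge 8$. A nonschurian example with $(k,d) = (2,4)$, say, is not excluded by anything in the paper, and there $d = 4$; your chain $n^{1/8} = \binom{m}{k}^{d/8} \ge m^{d/8} \ge m$ then fails (one only gets $n^{1/8} \ge (m/k)^{kd/8} \ge m/k$, so you additionally need $k$ bounded, which you have not established). Second, and more seriously, the passage from ``nonschurian Cameron sandwich'' to ``nonschurian vector (or set) association scheme'' is only valid when $m \ge m_0(k,d)$ (\Cref{lem:large-cameron-sandwich-implies-VAS}); at the point where you invoke it you have no upper bound on $k$ or $d$, hence no lower bound on $m$, so the correspondence --- and with it your claim that small $d$ is impossible ``uniformly over all $m$ and all $k$'' --- is not justified. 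The paper closes this gap by running the argument in the opposite order: assuming $|\Aut(\X)| \ge \exp(n^{1/8})$, it combines $\log|\Aut(\X)| = O(dm\log m + d\log d)$ with $m \le k\,n^{1/(kd)}$ and $k, d \le \log_2 n$ to get $n^{1/8} \le O(n^{1/(kd)}(\log n)^3)$, which forces $kd \le 8$ \emph{first}; only then is $m \ge n^{1/64}$ large enough to apply \Cref{lem:large-cameron-sandwich-implies-VAS}, primitivity gives homogeneity of $\S$ via \Cref{lem:prop-correspondence-cameron-sandwich}, nonschurianity gives $kd \ge 8$, hence $kd = 8$, $k \le 8$, $m = O(n^{1/8})$, and the stated bound. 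You need some version of this preliminary bound on $kd$ (or an independent treatment of the regime $m < m_0(k,d)$) before the combinatorial heart of the paper can be brought to bear.
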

    
    Among the counterexamples constructed in this paper,
    the minimal rank of a counterexample to \Cref{conj:babai2} is $28$ (as in \Cref{thm:main}),
    while the minimal rank of a counterexample to \Cref{conj:babai} is $24$ (see \Cref{table:nonschurian9}).
    It is still unknown whether there is a rank-$3$ counterexample to \Cref{conj:babai} or a rank-$5$ counterexample to \Cref{conj:babai3}, but if \Cref{conj:sand} is true then the rank of any nonschurian PCC with more than a quasipolynomial number of automorphisms must be at least $12$.
    Further work on set association schemes will likely increase this lower bound.
    
    \begin{proposition}
        \label{prop:ranks}
        Assume \Cref{conj:sand}.
        Let $\X$ be a nonschurian PCC of rank less than $12$.
        Then $|\Aut(\X)| \leq \exp O((\log n)^{O(1)}).$
    \end{proposition}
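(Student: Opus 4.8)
The plan is to deduce the proposition from \Cref{conj:sand} together with the combinatorial claim that every nonschurian Cameron sandwich has rank at least $12$. Granting that claim, the argument is immediate: if $\X$ is a nonschurian PCC of rank less than $12$, then alternative (1) of \Cref{conj:sand} is impossible (as $\X$ is nonschurian it would then be a nonschurian Cameron sandwich, forcing $\rank(\X) \ge 12$), so alternative (2) holds and $|\Aut(\X)| \le \exp O((\log n)^{O(1)})$.

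To prove the claim I would first bound the parameters. Suppose $\X$ is a Cameron sandwich, $\J(m,k)^d \le \X \le \Cam(m,k,d)$ with $m,k,d \ge 1$; using $\J(m,k) \cong \J(m,m-k)$ we may assume $m \ge 2k$. Since $\X \le \Cam(m,k,d)$, the full Cameron scheme is a fusion of $\X$, so $\rank(\X) \ge \rank(\Cam(m,k,d)) = \binom{k+d}{k}$. If $\rank(\X) \le 11$ then $\binom{k+d}{k} \le 11$, and an elementary check shows this leaves only $d = 1$ (with any $k \le 10$), or $k = 1$ with $2 \le d \le 10$, or $(k,d) \in \{(2,2),(2,3),(3,2)\}$. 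The case $d = 1$ is trivial, since then $\X = \Cam(m,k,1) = \J(m,k)$ is schurian; so it remains to treat the finitely many pairs $(k,d)$ with $d \ge 2$.

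For each such pair I would use the classification of Cameron sandwiches developed earlier in the paper: for $m$ sufficiently large in terms of $(k,d)$, the Cameron sandwiches with those parameters correspond bijectively to the coherent partitions of $\{0,\dots,k\}^d$ (the set association schemes when $k = 1$), with $\rank(\X)$ equal to the number of parts. As only finitely many such partitions exist for each $(k,d)$, it suffices to check that each one with at most $11$ parts is schurian: the pairs with $k = 1$, $2 \le d \le 8$ are covered by the exhaustive determination of Hamming sandwiches in those dimensions; the pairs $k = 1$, $d \in \{9,10\}$ by a general lower bound on the rank of a nonschurian set association scheme; and the three pairs $(2,2),(2,3),(3,2)$ by the analogous finite enumerations of coherent partitions of $\{0,1,2\}^2$, $\{0,1,2\}^3$ and $\{0,1,2,3\}^2$. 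It remains only to note that, for each $(k,d)$, the finitely many $m$ below the large-$m$ threshold yield configurations of bounded degree $n$, for which $|\Aut(\X)| \le n!$ is trivially quasipolynomial in $n$; so no counterexample hides there. Together these steps establish the claim, and hence the proposition.

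The step I expect to be the main obstacle is the verification above that low-rank coherent partitions are schurian for the parameter sets not already handled by the $d \le 8$ Hamming enumeration. The cleanest route is to prove a structural lower bound — uniform in $d$, and applicable for $k \ge 2$ as well — on the rank of a nonschurian set association scheme and its $\{0,\dots,k\}^d$ analogue, exceeding $11$; with such a bound in hand the small-$d$ computations become only a consistency check and the rest of the argument is routine.
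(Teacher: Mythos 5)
Your skeleton matches the paper's: invoke \Cref{conj:sand}, use $\rank(\X)\ge\rank(\Cam(m,k,d))=\binom{k+d}{k}<12$ to confine $(k,d)$ to a finite list, pass (for large $m$, small $m$ being harmless since $n$ is then bounded) to the corresponding coherent partition of $\{0,\dots,k\}^d$ via \Cref{lem:large-cameron-sandwich-implies-VAS} and \Cref{lem:prop-correspondence-cameron-sandwich}, and dispose of $k=1$, $d\le 8$ by \Cref{prop:nonschurian-enumeration} and $k=1$, $d\in\{9,10\}$ by the observation that a nontrivial set association scheme of degree $\ge 9$ has rank $\ge 12$ (which does hold: the trivial scheme already has $d+1$ cells, and by \Cref{lem:basic-props}(1) any split of a cell forces a split of its complementary cell). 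Up to there your argument is sound and is essentially the paper's.

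The gap is in the cases $(k,d)\in\{(2,2),(3,2),(2,3)\}$, which you yourself flag as ``the main obstacle'' and then resolve only by appeal to ``analogous finite enumerations of coherent partitions of $\{0,1,2\}^2$, $\{0,1,2\}^3$ and $\{0,1,2,3\}^2$'' --- enumerations that you do not carry out and that do not exist in the paper. The missing idea is to use the hypothesis that $\X$ is \emph{primitive}, which you never invoke: by \Cref{lem:prop-correspondence-cameron-sandwich}(2), primitivity forces the vector association scheme $\S$ to be \emph{homogeneous}, and for homogeneous schemes the remaining cases collapse without any enumeration. \Cref{lem:d=2} shows a homogeneous vector association scheme with $d\le 2$ is trivial (killing $(2,2)$ and $(3,2)$ --- indeed all of $d\le 2$, so you need not even list those parameters), and \Cref{prop:kd<8} shows a homogeneous one with $kd<8$ is schurian (killing $(2,3)$ by a half-page case analysis resting on \Cref{lem:basic-props-VAS}). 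Your proposed alternative --- a lower bound exceeding $11$ on the rank of an arbitrary (not necessarily homogeneous) nonschurian coherent partition, uniform in $d$ and valid for $k\ge 2$ --- is not established anywhere in the paper and would be a substantially stronger statement than what is needed; without either it or the homogeneity reduction, your proof of the key claim is incomplete.
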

    
    \Cref{conj:sand} is true for $|\Aut(\X)| > \exp(n^{1/3} (\log n)^C)$ by \cite{sun-wilmes}.
    For smaller values of $|\Aut(\X)|$ it is open.
    In a follow-up paper, the author and Muzychuk~\cite{followup} will prove \Cref{conj:sand} within the class of fusions of tensor powers of Johnson schemes with bounded parameters.
    To be precise, let $\X$ be a PCC such that $\J(m, k)^d \leq \X$, where $m \geq m_0(k, d)$.
    Then either
    \begin{enumerate}
        \item $\J(m, k)^d \leq \X \leq \Cam(m, k, d)$, or
        \item $\T_{\binom{m}{k}^e}^{d/e} \leq \X \leq \Ham{\tbinom{m}{k}^e}{d/e}$ for some integer $e \mid d$.
    \end{enumerate}
    In other words, all ``open sandwiches'' are sandwiches.
    
    Finally, let us mention one more conjecture, which has been emphasized by Pyber (private communication).
    This conjecture is significantly weaker than \Cref{conj:sand}, but seems to represent a key challenge (compare with \Cref{conj:babai2}).
    
    \begin{conjecture}[transitivity conjecture]
        Let $\X$ be a PCC of degree $n$. Then either
        \begin{enumerate}
            \item $\Aut(\X)$ is transitive or
            \item $|\Aut(\X)| \le \exp O((\log n)^{O(1)})$.
        \end{enumerate}
    \end{conjecture}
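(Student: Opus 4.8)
This conjecture is open; we sketch the line of attack we would take and explain where it stalls.

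Suppose $G = \Aut(\X)$ is intransitive, with orbits $O_1, \dots, O_t$, $t \ge 2$, labelled so that $s := |O_1| = \min_i |O_i| \le n/2$. The action of $G$ on $O_1$ gives a homomorphism $G \to \Sym(O_1)$ whose kernel is the pointwise stabilizer $G_{(O_1)}$, so $|G| \le s!\,|G_{(O_1)}|$; it therefore suffices to bound both $s$ and $|G_{(O_1)}|$. We would split into the cases ``$O_1$ tiny'' and ``all orbits large'', and in either case we may assume $|G| \le \exp(n^{1/3}(\log n)^{C})$, since otherwise $\X$ is one of the transitive rank-$3$ schemes $\Ham{m}{2}$, $\J(m,2)$ by Sun--Wilmes.

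\emph{Tiny-orbit case.} Suppose $s$ is small. Individualize $O_1$ pointwise: colour each $v \in V$ by the tuple of $\X$-cells containing the pairs $(v,u)$, $u \in O_1$, and run Weisfeiler--Leman refinement to a stable configuration $\X^{+} \ge \X$. This colouring is preserved by $G_{(O_1)}$, so if $\X^{+}$ is discrete then $G_{(O_1)} = 1$ and $|G| \le s!$, which is quasipolynomial once $s = (\log n)^{O(1)}$. The obstruction is that in the extremal primitive schemes --- tensor powers of Johnson and Hamming schemes and their fusions, i.e.\ precisely the Cameron sandwiches and Hamming sandwiches studied in this paper --- individualizing $s \ll \sqrt{n}$ points is far from discretizing. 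One would want a structure theorem asserting that a primitive coherent configuration which fails to discretize after few individualizations is already a fission of such a sandwich; since a Cameron sandwich has $\Aut(\X) \supseteq (\Sym(m)^{(k)})^{d}$ and a Hamming sandwich has $\Aut(\X) \supseteq \Sym(m)^{d}$, both transitive, this would contradict intransitivity and close the case.

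\emph{Large-orbit case.} Suppose every orbit has size $\ge n^{1/2}$, so $t \le n^{1/2}$ and $G$ is a subdirect product of the induced groups $G|_{O_i} \le \Sym(O_i)$. Babai's theorem applied to the primitive quotients of the orbital configurations $\X(G|_{O_i})$ (coherent fissions of the restrictions of $\X$ to $O_i \times O_i$), together with the Babai--Pyber bound in the $2$-transitive case, controls each factor --- but only up to $\exp(n^{1/2}(\log n)^{O(1)})$, so this by itself yields nothing better than $\exp(O(n))$. The real gain must come from the cross-orbit structure: since $\X$ is homogeneous every non-diagonal cell has constant valency and so spreads across orbits, and since $\X$ is primitive every color graph is connected, so the bipartite $\X$-structure between any two orbits is highly nontrivial. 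One wants to show that this structure forces the action on $O_i$ to determine the action on $O_j$ up to quasipolynomially many choices, collapsing the subdirect product --- a ``fibered'' analogue of Babai's rigidity arguments.

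\emph{Main obstacle.} In both cases one must drive the automorphism bound all the way down to quasipolynomial, whereas the available technology --- Babai's theorem, and Sun--Wilmes in the large-automorphism regime --- reaches only $\exp(n^{1/3})$. What is needed is a genuinely new argument converting the extra rigidity supplied by a $G$-invariant orbit partition --- which, as primitive non-vertex-transitive strongly regular graphs show, can lie transverse to the cell structure of $\X$ --- into a quasipolynomial bound. We do not see how to achieve this with current methods, which is precisely what makes the conjecture a key challenge.
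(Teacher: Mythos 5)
You have correctly identified that this statement is an open conjecture: the paper states it without proof, attributes it to Pyber (private communication), and describes it only as ``significantly weaker than \Cref{conj:sand}, but \ldots\ a key challenge.'' There is therefore nothing in the paper to compare your argument against, and your refusal to claim a proof is the right call. Your sketch is consistent with the paper's framing; in particular your observation that both Hamming and Cameron sandwiches have transitive automorphism groups (since $\J(m,k)^d \leq \X$ forces $\Aut(\X) \supseteq (\Sym(m)^{(k)})^d$) is exactly why the paper can assert that the transitivity conjecture is a formal consequence of the (equally open) sandwich conjecture, \Cref{conj:sand}. The one thing you might have stated explicitly is this implication itself, which is the only ``progress'' on the statement that the paper records; beyond that, your account of where current technology (Babai's $\exp(4n^{1/2}(\log n)^2)$ bound, Sun--Wilmes above $\exp(n^{1/3}(\log n)^C)$) runs out matches the state of the art as the paper presents it.
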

    
    \subsection{Acknowledgments}
    
    It is pleasure to thank Laci Pyber, Misha Muzychuk, and Laci Babai for numerous invaluable discussions.

    \section{Set association schemes}
    
    Let $\Omega$ be a finite set and let $2^\Omega$ denote the power set of $\Omega$.
    The \emph{type} $\tau$ of a triple $(a, b, c) \in (2^\Omega)^3$ is its orbit $(a, b, c)^{\Sym(\Omega)}$ under the action of $\Sym(\Omega)$. Concretely, the type of a triple $(a, b, c)$ is determined by the vector of intersection sizes
    \[
        (|a|, |b|, |c|, |a \cap b|, |a \cap c|, |b \cap c|, |a \cap b \cap c|).
    \]
    We call $(a, b, c)$ a \emph{triangle} if $a \subset b \cup c$ and $b \subset a \cup c$ and $c \subset a \cup b$.
    The type of a triangle will be called a \emph{triangle type}.
    
    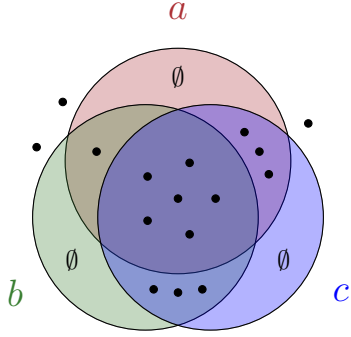
\begin{figure}[ht]
        \centering
        \begin{tikzpicture}[scale=0.5]
                \foreach \color/\rotation in {Maroon/90, OliveGreen/90+120, blue/90+240}{
                    \draw[fill=\color, opacity=0.3] (\rotation:1) circle (3) ;
                    \draw (\rotation:1) circle (3);
                }
                \node[color=Maroon] at (90:5) {\LARGE $a$};
                \node[color=OliveGreen] at (90+120:5) {\LARGE $b$};
                \node[color=blue] at (90+240:5) {\LARGE $c$};
                
                \foreach \rotation in {90, 90+120, 90+240}
                    \node at (\rotation:3.25) {$\emptyset$};
                
                \foreach \position in {(0:0), (0:1), (72:1), (144:1), (216:1), (288:1),
                                                    (30:2.5), (30+15:2.5), (30-15:2.5),
                                                    (-90:2.5), (-90+15:2.5), (-90-15:2.5),
                                                    (150:2.5),
                                                    (90-60:4), (90+70:4), (90+50:4)
                                                    }
                    \node[draw, circle, inner sep=1pt, fill] at \position {};
    %            \draw[help lines] (-5, 5) grid (5, -5);
        \end{tikzpicture}
        \caption{A typical triangle $(a, b, c)$}
    \end{figure}
    
    Let $\S$ be a partition of $2^\Omega$ and let $\tau$ be a triple type.
    For $\alpha, \beta, \gamma \in \S$ and $a \in \alpha$ we denote
    \[
        p^a_{\beta\gamma;\tau} = \# \{(b, c) \in \beta \times \gamma : (a, b, c) \in \tau\}.
    \]
    We say $\S$ is \emph{coherent} with respect to the type $\tau$ if $p^a_{\beta\gamma;\tau} = p^{a'}_{\beta\gamma;\tau}$ for all $\alpha, \beta, \gamma \in \S$ and all $a, a' \in \alpha$;
    in this case we denote the common value of $p^a_{\beta\gamma;\tau}$ by $p^\alpha_{\beta\gamma;\tau}$.
    The numbers $p^\alpha_{\beta\gamma;\tau}$ are the \emph{structure constants} of $\S$.
    
    \begin{definition}
        \label{defn:SAS}
        A \emph{set association scheme} is a partition $\S$ of $2^\Omega$ such that
        \begin{enumerate}
            \item $a, a' \in \alpha \in \S \implies |a| = |a'|$,
            \item $\S$ is coherent with respect to all triangle types.
        \end{enumerate}
        We call $\S$ \emph{fully coherent} if $\S$ is coherent with respect to all types (triangle or not).
        The \emph{degree} of $\S$ is $|\Omega|$.
        The \emph{rank} of $\S$ is the number of cells, denoted $\rank(\S)$.
    \end{definition}
    
    Like coherent configurations, the prototypical set association scheme is the orbital scheme of a permutation group.
    Let $G \leq \Sym(\Omega)$ and let $\S(G) = 2^\Omega / G$ be the partition of $2^\Omega$ into $G$-orbits.
    Then $\S(G)$ is a fully coherent set association scheme.
    We call $\S(G)$ the \emph{orbital scheme} of $G$.
    A set association scheme is called \emph{schurian} if is the orbital scheme of some group, and \emph{nonschurian} otherwise.
    Prominent among the schurian schemes are the \emph{discrete set association scheme}
    \[
        \S(1) = \{\{a\} : a \subset \Omega\}
    \]
    and the \emph{trivial set association scheme}
    \[
        \S(\Sym(\Omega)) = \{\{a \subset \Omega : |a| = k\} : 0 \leq k \leq |\Omega|\}.
    \]
    
    For $a \in 2^\Omega$ we use $a^c$ to denote the complement $\Omega \setminus a$.
    For $\alpha \subset 2^\Omega$ we use $\alpha^c$ to denote the set of complements $\{a^c : a \in \alpha\}$ (not the complement $2^\Omega \setminus \alpha$).
    
    \begin{lemma}
        \label{lem:basic-props}
        Let $\S$ be a set association scheme. Then
        \begin{enumerate}
            \item $\alpha \in \S \implies\alpha^c \in \S$,
            \item for any $\alpha, \beta \in \S$, the containment graph $\{(a, b) \in \alpha \times \beta : a \subset b\}$ is biregular.
        \end{enumerate}
    \end{lemma}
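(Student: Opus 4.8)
The plan is to apply the coherence hypothesis to a handful of hand-picked triangle types. A useful preliminary remark is that $\{\emptyset\}$ and $\{\Omega\}$ are automatically cells of $\S$: every cell of $\S$ is size-homogeneous, while $\emptyset$ and $\Omega$ are the unique subsets of $\Omega$ of their respective sizes.

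For (1), fix $\alpha \in \S$ and let $k$ be the common size of its members. For any $a \in \alpha$ the triple $(a, a^c, \Omega)$ is a triangle (all three required containments are containments in $\Omega$), and its type $\tau$ depends only on $k$ and $|\Omega|$. Taking $\gamma = \{\Omega\}$, the count $p^a_{\beta\gamma;\tau}$ for a cell $\beta$ is the number of $b \in \beta$ with $(a, b, \Omega)$ of type $\tau$; but those conditions force $|b| = |\Omega| - k$ and $a \cap b = \emptyset$, hence $b = a^c$, so $p^a_{\beta\gamma;\tau}$ is $1$ if $a^c \in \beta$ and $0$ otherwise. Coherence makes this indicator independent of $a \in \alpha$, so, $\S$ being a partition, there is a single cell $\beta$ with $\alpha^c \subseteq \beta$. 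Now $|\beta| \ge |\alpha^c| = |\alpha|$, so if $|\alpha| \ge 2$ then also $|\beta| \ge 2$ and we may rerun the argument with $\beta$ in place of $\alpha$ to obtain a cell $\alpha'$ with $\beta^c \subseteq \alpha'$; since $\alpha \subseteq \beta^c \subseteq \alpha'$ and cells are disjoint, $\alpha' = \alpha$, hence $\beta \subseteq \alpha^c$ and so $\alpha^c = \beta \in \S$. The one remaining case is a singleton cell $\alpha = \{a\}$ with $a \notin \{\emptyset, \Omega\}$, where the above is vacuous; there I would instead run coherence using $\{a\}$ and $\{\Omega\}$ as the second and third cells together with the triangle type of $(a^c, a, \Omega)$, whereupon for $b$ in the cell $\beta$ of $a^c$ the relevant structure count equals $1$ exactly when $b = a^c$, forcing $\beta = \{a^c\} \in \S$.

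For (2), if no member of $\alpha$ is contained in a member of $\beta$ the containment graph is empty and trivially biregular; otherwise the common sizes satisfy $k := |a| \le \ell := |b|$. For $a \subseteq b$ the triple $(a, b, b)$ is a triangle, and its type $\tau$ is a triangle type (take any $k$-subset of an $\ell$-subset of $\Omega$), so $\S$ is coherent with respect to $\tau$; since among pairs in $\beta \times \beta$ the triple $(a, b', c')$ has type $\tau$ exactly when $c' = b'$ and $a \subseteq b'$, the count $p^a_{\beta\beta;\tau}$ is the number of $b \in \beta$ containing $a$, which coherence makes independent of $a \in \alpha$ — one side of biregularity. For the other side I would use the triple $(b, a, b)$, again a triangle when $a \subseteq b$: its type $\tau'$ is a triangle type, and among pairs in $\alpha \times \beta$ the triple $(b, y, z)$ has type $\tau'$ exactly when $z = b$ and $y \subseteq b$, so $p^b_{\alpha\beta;\tau'}$ is the number of $a \in \alpha$ contained in $b$, which coherence makes independent of $b \in \beta$. (Alternatively, this second direction follows from the first by passing to complements and using (1) to see that $\alpha^c$ and $\beta^c$ are cells.)

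This lemma is essentially routine coherence bookkeeping, and I do not expect a genuine obstacle. The points that need a little care are checking that the chosen triples really are triangles and that their types are nonempty, so that ``coherence with respect to all triangle types'' is a usable hypothesis for them; noting that $\{\emptyset\}$ and $\{\Omega\}$ are forced to be cells; and treating singleton cells separately in part (1), where the first argument degenerates.
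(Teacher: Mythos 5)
Your proof is correct and follows essentially the same route as the paper's: the same triangle types $(a, a^c, \Omega)$ and $(a, b, b)$, the same use of the forced cells $\{\emptyset\}$ and $\{\Omega\}$, and the same counting argument. The only difference is that your singleton-cell case split in part (1) is unnecessary --- rerunning the complementation argument with $\beta$ in place of $\alpha$ (the paper's ``likewise'' step) yields $\beta^c \subseteq \alpha$ with no hypothesis on $|\alpha|$ or $|\beta|$, though the separate patch you give for that case is also valid.
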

    \begin{proof}
        Let $\omega = \{\Omega\}$. By condition (1) of \Cref{defn:SAS}, $\omega \in \S$.
        
        (1) Let $a \in \alpha \in \S$ and let $\beta$ be the part of $\S$ containing $a^c$.
        Let $\tau$ be the type of the triangle $(a, a^c, \Omega)$.
        For any $a' \in \alpha$,
        \[
            \#\{(b, c) \in \beta \times \omega : (a', b, c) \in \tau\} =
            \begin{cases}
                1 & \text{if}~ (a')^c \in \beta \\
                0 & \text{else}.
            \end{cases}.
        \]
        Thus by coherence $(a')^c \in \beta$ for every $a' \in \alpha$, so $\alpha^c \subset \beta$, and likewise $\beta^c \subset \alpha$, so $\beta = \alpha^c$.
        
        (2) Let $(a_0, b_0) \in \alpha \times \beta$ be such that $a_0 \subset b_0$ and let $\tau$ be the type of the triangle $(a_0, b_0, b_0)$. Then for any $a \in \alpha$,
        \[
            \# \{(b, b') \in \beta \times \beta : (a, b, b') \in \tau\} = \# \{ b \in \beta : a \subset b\},
        \]
        and by coherence this must be the same for every $a \in \alpha$.
        By a similar argument, or using (1), the quantity $\# \{a \in \alpha : a \subset b\}$ must be the same for every $b \in \beta$.
        Hence the containment graph is biregular.
    \end{proof}
    
    Now let $m \geq 1$ and let $V = [m]^\Omega$.
    If $\S$ is a partition of $2^\Omega$ we define $[m]^\S$ to be the symmetric configuration on $V$ with relations $R_\alpha$ ($\alpha \in \S$) defined by
    \[
        (u, v) \in R_\alpha \iff \{i : u_i \neq v_i\} \in \alpha.
    \]
    Note that if $\Omega = [d]$ and $\S = \S(1)$ then $[m]^\S = \T_m^d$, while if $\S = \S(\Sym(\Omega))$ then $[m]^\S = \Ham{m}{d}$.
    Now if $\S$ is an arbitrary set association scheme then $\S(1) \leq \S \leq \S(\Sym(\Omega))$, so
    \[
        \T_m^d \leq [m]^\S \leq \Ham{m}{d}.
    \]
    
    \begin{proposition}
        \label{lem:hamming-sandwiches}
        Let $m, d \geq 1$ and let $\Omega = [d]$.
        \begin{enumerate}
            \item If $\S$ is a set association scheme on $\Omega$ then $\X = [m]^\S$ is an association scheme
            such that $\T_m^d \leq \X \leq \Ham{m}{d}$.
            \item Conversely if $\T_m^d \leq \X \leq \Ham{m}{d}$ and $m \geq m_0(d)$ then $\X = [m]^\S$ for some set association scheme $\S$ on $\Omega$. One can take $m_0(d) = 3^d + 4$.
        \end{enumerate}
    \end{proposition}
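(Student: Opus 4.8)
The containments $\T_m^d \le [m]^\S \le \Ham{m}{d}$ have already been observed, so it remains to check that $\X := [m]^\S$ is coherent; it is visibly symmetric, and condition~(1) of \Cref{defn:SAS} forces $\{\emptyset\}\in\S$, so $R_{\{\emptyset\}}$ is the identity relation and $\X$ is homogeneous. My plan is to compute intersection numbers directly. Fix $(u,v)\in R_\alpha$, put $a=\{i:u_i\ne v_i\}\in\alpha$, and let $b,c\subset\Omega$. A case analysis over the coordinates shows that $\#\{w\in V:\{i:u_i\ne w_i\}=b,\ \{i:w_i\ne v_i\}=c\}$ is $0$ unless $(a,b,c)$ is a triangle, and otherwise equals $N(a,b,c)=(m-1)^{|b\setminus a|}(m-2)^{|a\cap b\cap c|}$: a coordinate outside $a$ lies in $b$ iff it lies in $c$, contributing a factor $m-1$ or $1$; a coordinate inside $a$ contributes $m-2$, $1$, or $0$ according as it lies in $b\cap c$, in exactly one of $b,c$, or in neither, the last case forcing $a\subset b\cup c$. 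In particular $N(a,b,c)$ depends only on the type of $(a,b,c)$. Summing over $b\in\beta$, $c\in\gamma$ and grouping by type, the relevant intersection number of $\X$ is $\sum_\tau p^a_{\beta\gamma;\tau}N_\tau$, the sum over triangle types $\tau$, where $N_\tau$ is the common value of $N$ on $\tau$; by coherence of $\S$ this equals $\sum_\tau p^\alpha_{\beta\gamma;\tau}N_\tau$, independent of the choice of $a\in\alpha$. Hence $\X$ is a coherent configuration, and being symmetric and homogeneous it is an association scheme.

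\textbf{Part (2), the easy half.} The sandwich already pins down the coarse structure. Since $\T_m^d\le\X$, every cell of $\X$ is a union $R_\alpha=\bigcup_{a\in\alpha}R_a$ of cells of $\T_m^d$, so the blocks $\alpha$ partition $2^\Omega$ into some $\S$ with $\X=[m]^\S$, and each $R_\alpha$ is symmetric, so $\X$ is an association scheme. Since $\X\le\Ham{m}{d}$, each $R_\alpha$ lies in a single Hamming class, forcing all members of $\alpha$ to have equal size --- condition~(1) of \Cref{defn:SAS}. So it remains to show that $\S$ is coherent with respect to every triangle type, and only here is the hypothesis $m\ge m_0(d)$ used.

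\textbf{Part (2), the crux.} Fix cells $\alpha,\beta,\gamma$ whose members have sizes $p,q,r$, and fix $a,a'\in\alpha$. Coherence of $\X$ gives $\sum_{b\in\beta,c\in\gamma}N(a,b,c)=\sum_{b\in\beta,c\in\gamma}N(a',b,c)$. The plan is to read both sides as the value at $x=m$ of $F_a(x)=\sum_\tau p^a_{\beta\gamma;\tau}\,(x-1)^{k(\tau)}(x-2)^{\ell(\tau)}\in\Z[x]$, where $k(\tau)=|b\setminus a|$ and $\ell(\tau)=|a\cap b\cap c|$ on a triangle $(a,b,c)\in\tau$ of cell sizes $(p,q,r)$. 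A short computation shows that such types form a consecutive family indexed by $s=|a\cap b\cap c^c|$, with $k=r-p+s$ and $\ell=p+q-r-2s$, so the monomials $(x-1)^{k(\tau)}(x-2)^{\ell(\tau)}$ have pairwise distinct degrees $q-s$; hence they are linearly independent and $F_a$ determines every $p^a_{\beta\gamma;\tau}$. It therefore suffices to upgrade $F_a(m)=F_{a'}(m)$ to the polynomial identity $F_a=F_{a'}$. Write $G=F_a-F_{a'}=\sum_s e_s\,(x-1)^{r-p+s}(x-2)^{p+q-r-2s}$ with $e_s\in\Z$; at $x=m$ consecutive terms have ratio $\rho=(m-1)/(m-2)^2<1$ (as $m\ge4$). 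If $G\ne0$, take $s^*$ minimal with $e_{s^*}\ne0$, so $|e_{s^*}|\ge1$; bounding $|e_s|\le\max(p^a_{\beta\gamma;\tau_s},p^{a'}_{\beta\gamma;\tau_s})\le 3^d$ --- a set $a$ has at most $3^{|a|}2^{d-|a|}\le3^d$ triangle completions $(b,c)$ --- and summing a geometric series gives $|G(m)|\ge(m-1)^{r-p+s^*}(m-2)^{p+q-r-2s^*}\bigl(1-3^d\rho/(1-\rho)\bigr)$, which is positive once $(3^d+1)(m-1)<(m-2)^2$; a direct check shows this holds for $m\ge 3^d+4$. This contradicts $G(m)=0$, so $G=0$ and $p^a_{\beta\gamma;\tau}=p^{a'}_{\beta\gamma;\tau}$ for every triangle type $\tau$, as required.

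\textbf{Main obstacle.} The coordinate case analysis in (1) and the shape arguments in (2) are routine. The delicate point is the final estimate: one must track the bound $3^d$ on the structure constants together with the inequality $(3^d+1)(m-1)<(m-2)^2$ precisely in order to land on exactly $m_0(d)=3^d+4$ rather than a cruder exponential bound.
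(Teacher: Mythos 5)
Your proof is correct and follows essentially the same route as the paper: compute the intersection numbers of $[m]^\S$ explicitly as $(m-1)^{|b\cap c\setminus a|}(m-2)^{|a\cap b\cap c|}$ summed over triangles, group by triangle type, and for the converse use the uniform bound $3^d$ on the type-counts together with a dominant-term/geometric-series estimate to force termwise equality once $m \geq 3^d+4$. Your parametrization by $s=|a\cap b\setminus c|$ (smallest $s$ dominant) is just a relabeling of the paper's choice of the largest $\ell=|a\cap b\cap c|$ with differing counts.
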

    
    \begin{proof}
        (1)
        Let $\X = [m]^\S$.
        Then $\X$ is a configuration because $\{\emptyset\} \in \S$,
        and plainly $\X$ is homogeneous and symmetric.
        We must check coherence.
        Let $u, v \in V$.
        Let $\alpha$ be the cell of $\S$ containing the set $a = \{i : u_i \neq v_i\}$.
        If $b, c \subset \Omega$ then the the number of $w \in V$ such that $\{i : u_i \neq w_i\} = b$ and $\{i : w_i \neq v_i\} = c$ is zero unless $(a, b, c)$ is a triangle, and in this case it is
        \[
            (m-1)^{|b \cap c \setminus a|} (m-2)^{|a \cap b \cap c|}.
        \]
        Hence the number of $w \in V$ such that $\{i : u_i \neq w_i\} \in \beta$ and $\{i: w_i \neq v_i\} \in \gamma$ is
        \begin{equation}
            \label{eq:coherence-sum}
            p^a_{\beta\gamma}(m) = \sum_{\substack{b \in \beta, c \in \gamma \\ (a, b, c) ~ \text{triangle}}} (m-1)^{|b \cap c \setminus a|} (m-2)^{|a \cap b \cap c|}.
        \end{equation}
        Since the summand depends only on the type of $(a, b, c)$, we can write this as a sum over types:
        \[
            p^a_{\beta\gamma}(m) = \sum_{\tau~\text{triangle type}} p^a_{\beta\gamma;\tau} (m-1)^{\tau_{011} - \tau_{111}} (m-2)^{\tau_{111}}.
        \]
        Here $\tau_{011} = |b \cap c|$ and $\tau_{111} = |a \cap b \cap c|$ for any $(a, b, c) \in \tau$.
        By coherence of $\S$, this depends on $a$ only through $\alpha$, so this proves coherence of $\X$.
        
        (2)
        Conversely,  suppose $\X$ is an association scheme such that $\T_m^d \leq \X \leq \Ham md$.
        Since the relations of $\T_m^d$ are in bijection with $2^\Omega$
        and $\T_m^d$ refines $\X$,
        there is some partition $\S$ of $2^\Omega$ such that $\X = [m]^\S$.
        Since $\X \leq \Ham md$, part (1) of \Cref{defn:SAS} holds.
        Coherence of $\X$ demands that the sum \eqref{eq:coherence-sum} depends only on the cell $\alpha \in \S$ containing $a$. Therefore for any $a, a' \in \alpha$ we must have equality
        \[
            p^a_{\beta\gamma}(m) = p^{a'}_{\beta\gamma}(m).
        \]
        Now the type of a triangle $(a, b, c)$ is determined by $|a|$, $|b|$, $|c|$, and $|a \cap b \cap c|$, because
        $a \cup b = a \cup c = b \cup c = a \cup b \cup c$ and
        \[
            |a| + |b| + |c| = 2 |a \cup b \cup c| + |a \cap b \cap c|.
        \]
        Suppose $(|a|, |b|, |c|) = (i, j, k)$ and let $N_\ell^a$ be the number of $(b, c) \in \beta \times \gamma$ with $(a, b, c)$ a triangle and $|a \cap b \cap c| = \ell$.
        For each such triangle we have
        \[
            |b \cap c \setminus a| = |b \cap c| - \ell = (j + k - i - \ell)/2,
        \]
        so
        \[
            p^a_{\beta\gamma}(m) = \sum_{\ell \geq 0} N_\ell^a (m-1)^{(j + k - i - \ell)/2} (m-2)^\ell,
        \]
        and $N_\ell^a = 0$ if $j + k - i - \ell$ is odd or negative.
        If $\S$ is not coherent then there is some $N_\ell^a \neq N_\ell^{a'}$ for some $\alpha, \beta, \gamma \in \S$ and some $a, a' \in \alpha$.
        Let $L$ be the largest such index $\ell$.
        Then
        \[
            0 = p^a_{\beta\gamma}(m) - p^{a'}_{\beta\gamma}(m)
            = \sum_{\ell = 0}^L (N^a_\ell - N^{a'}_\ell) (m-1)^{(j+k-i-\ell)/2} (m-2)^\ell.
        \]
        Dividing by $(m-1)^{(j+k-i-L)/2} (m-2)^L$ and rearranging,
        \[
            N_L^{a'} - N_L^a = \sum_{\substack{\ell=0 \\ \ell - L ~ \text{even}}}^{L-1} (N_\ell^a - N_\ell^{a'}) (m-1)^{(L-\ell)/2} (m-2)^{\ell-L}.
        \]
        Now we claim $N_\ell^a \leq 3^d$ uniformly, so
        \[
            1 \leq |N_L^{a'} - N_L^a|
            \leq 3^d \sum_{t > 0} \pfrac{m-1}{(m-2)^2}^t
            = 3^d / ((m-2)^2 / (m-1) - 1).
        \]
        Rearranging gives $m < 3^d + 4$.
        
        To prove the bound $N_\ell^a \leq 3^d$, consider choosing $b, c \subset \Omega$ such that $(a, b, c)$ is a triangle.
        Each element of $a$ must be in either $a \cap b \setminus c$, $a \cap c \setminus b$, or $a \cap b \cap c$.
        Each element of $a^c$ must be in either $(a \cup b \cup c)^c$ or $b \cap c \setminus a$.
        This proves $N_\ell^a \leq 3^{|a|} 2^{|a^c|} \leq 3^d$.
    \end{proof}
    
    In the next lemma we relate properties of $[m]^\S$ to properties of $\S$.
    Let $\S$ be a set association scheme with vertex set $\Omega$.
    We say $\S$ is \emph{homogeneous} if $\{a \in 2^\Omega : |a| = 1\}$ is a cell of $\S$.
    Two set association schemes $\S$ and $\S'$ (with vertex classes $\Omega$ and $\Omega'$) are \emph{(weakly) isomorphic} if there is a bijection $f : \Omega \to \Omega'$ mapping the partition $\S$ to the partition $\S'$.
    The \emph{weak automorphism group} of $\S$ is $\Aut_w(\S) = \Iso_w(\S, \S)$;
    its normal subgroup consisting of bijections $f : \Omega \to \Omega$ preserving each cell of $\S$ individually is the \emph{(strong) automorphism group} $\Aut(\S)$.
    We have already said $\S$ is called \emph{schurian} if it is the orbital scheme of some group;
    this is equivalent to $\S = \S(\Aut(\S))$.
    
    \begin{lemma}
        \label{lem:prop-correspondence}
        Let $\S$ be a set association scheme with vertex set $\Omega = [d]$, where $d \geq 2$, and let $m \geq 2$.
        \begin{enumerate}
            \item $\rank([m]^\S) = \rank(\S)$.
            \item $[m]^\S$ is primitive $\iff$ $\S$ is homogeneous and $m \geq 3$.
            \item $\Aut([m]^\S) = \Sym(m) \wr \Aut(\S)$.
            \item $[m]^\S$ is schurian $\iff$ $\S$ is schurian.
            \item $\Aut([m]^\S)$ is primitive $\iff$ $\Aut(\S)$ is transitive and $m \geq 3$.
        \end{enumerate}
    \end{lemma}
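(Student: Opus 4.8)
The plan is to treat (1) as a definition-chase, to derive (4) and (5) from the structural identity (3), and to argue (2) and (3) directly; the content of the lemma lies in (3) and in the forward direction of (2). For (1): since $m\ge2$, every $a\in2^\Omega$ occurs as $\{i:u_i\ne v_i\}$ for some $u,v\in[m]^d$, so $\alpha\mapsto R_\alpha$ is a bijection from the cells of $\S$ onto the relations of $[m]^\S$, giving $\rank([m]^\S)=\rank(\S)$; the same remark shows $\S\mapsto[m]^\S$ is injective, since $\alpha$ is recovered from $R_\alpha$ as the set of difference sets occurring among its pairs. I will reuse this injectivity in (4).

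For (3), one inclusion is a direct check: an element $(f_1,\dots,f_d;\pi)\in\Sym(m)\wr\Aut(\S)$, acting on $[m]^\Omega$ in the product action, sends a pair with difference set $a$ to one with difference set $\pi(a)$ (the bijections $f_i$ do not affect which coordinates agree), and since $\pi\in\Aut(\S)$ fixes every cell of $\S$ setwise, it preserves every $R_\alpha$. For the reverse inclusion, observe that the union of the $R_\alpha$ over the cells $\alpha$ consisting of singletons is exactly the edge set of the Hamming graph $H(d,m)=K_m^{\,\square d}$, so every $g\in\Aut([m]^\S)$ is a graph automorphism of $H(d,m)$; since $K_m$ is prime for the Cartesian product ($m\ge2$), $\Aut H(d,m)=\Sym(m)\wr\Sym(d)$, hence $g=(f_1,\dots,f_d;\pi)$ for some $\pi\in\Sym(d)$. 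Because $g$ carries a pair with difference set $a$ to one with difference set $\pi(a)$ and fixes each $R_\alpha$, we get $\pi(a)\in\alpha$ whenever $a\in\alpha$; as this holds for every cell, $\pi$ preserves every cell of $\S$, i.e.\ $\pi\in\Aut(\S)$, and so $g\in\Sym(m)\wr\Aut(\S)$.

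Parts (4) and (5) then follow quickly. For (4): a short computation — using $m\ge2$ to adjust the value-permutations on coordinates where a pair does, or does not, differ — shows that for any $G\le\Sym(\Omega)$ the orbits of $\Sym(m)\wr G$ on $([m]^\Omega)^2$ are precisely the $R_\alpha$ with $\alpha\in\S(G)$, i.e.\ $\X(\Sym(m)\wr G)=[m]^{\S(G)}$; combining this with (3) gives $\X(\Aut([m]^\S))=[m]^{\S(\Aut(\S))}$, so $[m]^\S$ is schurian $\iff$ $\S=\S(\Aut(\S))$ $\iff$ $\S$ is schurian, by injectivity of $\S\mapsto[m]^\S$. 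For (5): by (3) we must decide when $\Sym(m)\wr\Aut(\S)$ is primitive in its product action on $[m]^\Omega$ with $|\Omega|=d\ge2$; by the standard primitivity criterion for product actions this holds iff $\Aut(\S)$ is transitive on $\Omega$ and $\Sym(m)$ is primitive but not regular on $[m]$, and $\Sym(m)$ (which is always $2$-transitive, hence primitive) fails to be regular precisely when $m\ge3$.

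Finally (2). Since $\{\emptyset\}$ is a cell and $R_{\{\emptyset\}}$ is the identity relation, $[m]^\S$ is homogeneous, so it is primitive iff the only equivalence relations on $[m]^\Omega$ that are unions of relations $R_\alpha$ are the diagonal and the full relation. Such an equivalence relation corresponds to a set $A\subseteq2^\Omega$ that is a union of cells of $\S$, contains $\emptyset$, and — here one uses $m\ge3$ to realise an arbitrary triangle of values across the three coordinate patterns — is closed under triangle composition: if $a,b\in A$ and $(a,b,c)$ is a triangle then $c\in A$. Assuming $\S$ homogeneous, I claim any such $A\ne\{\emptyset\}$ equals $2^\Omega$: taking a nonempty $a\in A$ of least size, the triangle $(a,a,c)$ with $c\subsetneq a$ a singleton puts a singleton into $A$; then ``homogeneous'' together with ``union of cells'' puts every singleton into $A$; and the triangles $(e\setminus\{j\},\{j\},e)$ raise this to every subset by induction on size. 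This gives primitivity when $\S$ is homogeneous and $m\ge3$. For the converse: if $m=2$ then $\{\Omega\}=\{\emptyset\}^c$ is a cell by \Cref{lem:basic-props} and $R_{\{\Omega\}}$ is a perfect matching on $2^d\ge4$ vertices, hence disconnected; and if $\S$ is not homogeneous then some cell $\alpha$ of singletons is a proper nonempty subset of the set of all singletons, and $R_\alpha$ is disconnected, its components being the fibres of the projection of $[m]^\Omega$ onto the coordinates outside $\{i:\{i\}\in\alpha\}$. Either way $[m]^\S$ is imprimitive. The main obstacle is the reverse inclusion in (3): everything there rests on identifying $\Aut H(d,m)$, for which one invokes (or reproves directly) the unique prime factorisation of $K_m^{\,\square d}$ over the Cartesian product; once (3) holds, (4)--(5) are bookkeeping, while the forward direction of (2) is genuinely combinatorial and, notably, cannot be read off from (5), since a homogeneous set association scheme need not have a transitive automorphism group — exactly the slack that makes nonschurian primitive examples possible.
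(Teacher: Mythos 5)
Your proof is correct, and for parts (1), (3), and (4) it follows essentially the same route as the paper: the key step in (3) is in both cases the reduction to $\Aut(\Ham md)=\Sym(m)\wr\Sym(d)$ (the paper sandwiches $\Aut([m]^\S)$ between $\Aut(\T_m^d)$ and $\Aut(\Ham md)$ and applies the modular law; you recover the Hamming graph as the union of the singleton-cell relations and invoke unique prime factorisation for the Cartesian product --- the same fact in different packaging), and (4) is the identical computation $\X(\Sym(m)\wr G)=[m]^{\S(G)}$. The two places you genuinely diverge are (2) and (5). For the sufficiency direction of (2) the paper argues directly that each colour graph $R_\beta$ is connected, using the biregularity statement of \Cref{lem:basic-props}(2) to get $\bigcup\beta=\Omega$; you instead pass to the parabolic characterisation of primitivity and show any triangle-closed union of cells containing a nonempty set is all of $2^\Omega$. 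Your route is more self-contained in one respect (the paper leaves the final connectivity check as ``easy to prove'') but quietly relies on the standard equivalence between ``every colour graph connected'' and ``no nontrivial parabolic'' for homogeneous coherent configurations, which deserves a sentence since the paper's definition of primitivity is the former. For (5) you use the primitivity criterion for product actions of wreath products, which is precisely the alternative the paper flags (\cite{dixon--mortimer}*{Lemma~2.7A}); the paper's main route instead derives (5) from (2) and the identity $\X(\Aut([m]^\S))=[m]^{\S(\Aut(\S))}$, which is marginally more economical since it reuses machinery already established. Both of your variants are sound; nothing is missing.
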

    \begin{proof}
        (1) Immediate from the definition of $[m]^\S$.
        
        (2) Suppose $[m]^\S$ is primitive. Let $a \in \alpha \in \S$ where $a$ is a singleton.
        By part (1) of \Cref{defn:SAS}, every $a' \in \alpha$ is a singleton.
        Since the color graph of $[m]^\S$ defined by $\alpha$ must be connected, we must have $\alpha = \{a \in 2^\Omega : |a| = 1\}$.
        Hence $\S$ is homogeneous.
        Moreover $[m]^\S \leq \Ham{m}{d}$, so $\Ham{m}{d}$ is primitive, so $m \geq 3$ (the maximal-distance graph of $\Ham{2}{d}$ is disconnected).
        
        Conversely, suppose $\S$ is homogeneous and $m \geq 3$.
        Let $\alpha = \{a \in 2^\Omega : |a| = 1\}$ and let $\beta \in \S \setminus \{\{\emptyset\}\}$.
        Then there is some $a \in \alpha$ and $b \in \beta$ such that $a \subset b$.
        By \Cref{lem:basic-props}(2) it follows that $\bigcup \beta = \Omega$.
        Now it is easy to prove using this and $m \geq 3$ that the constituent graph of $[m]^\S$ defined by $\beta$ is connected.
        
        (3) Let $A = \Aut([m]^\S)$. Since $\T_m^d \leq [m]^\S \leq \Ham{m}{d}$,
        \[
            \Sym(m)^d = \Aut(\T_m^d) \leq A \leq \Aut(\Ham{m}{d}) = \Sym(m)^d \Sym(d)^\eps,
        \]
        where $\eps : \Sym(d) \to \Sym([m]^d)$ is the embedding of $\Sym(d)$ as the subgroup of $\Sym([m]^d)$ permuting the axes. Hence by the modular property of groups
        \[
            A = A \cap (\Sym(m)^d \Sym(d)^\eps) = \Sym(m)^d (A \cap \Sym(d)^\eps).
        \]
        Now $A \cap \Sym(d)^\eps$ is the group of permutations of the axes preserving the relations of $[m]^\S$, so $A \cap \Sym(d)^\eps = \Aut(\S)^\eps$.
        
        (4) If $G \leq \Sym(d)$ then the orbital configuration of $\Sym(m) \wr G$ is $[m]^{\S(G)}$. Hence by (3) the orbital configuration of $\Aut([m]^\S)$ is $[m]^{\S(\Aut(\S))}$, and this is equal to $[m]^\S$ if and only if $\S = \S(\Aut(\S))$.
        
        (5) As in (4), $\X(\Aut([m]^\S)) = [m]^{\S(\Aut(\S))}$. Now apply (2).
        (Alternatively, see~\cite{dixon--mortimer}*{Lemma~2.7A}.)
    \end{proof}
    
    Finally we give two ways of constructing new set association schemes from old ones.
    Let $\S$ and $\S'$ be set association schemes on $\Omega$ and $\Omega'$.
    Let $\Omega \sqcup \Omega'$ denote the disjoint union of $\Omega$ and $\Omega'$.
    For $\alpha \in \S$ and $\alpha' \in \S'$, define
    \[
        \alpha \oplus \alpha' = \{a \sqcup a' : a \in \alpha, a' \in \alpha'\}.
    \]
    The \emph{direct sum} of $\S$ and $\S'$ is the partition of $2^{\Omega \cup \Omega'}$ defined by
    \[
        \S \oplus \S' = \{\alpha \oplus \alpha' : \alpha \in \S, \alpha' \in \S'\}.
    \]
    Note that if $G$ and $H$ are permutation groups acting on $\Omega$ and $\Omega'$ respectively then $\S(G \times H) = \S(G) \oplus \S(H)$.
    
    \begin{lemma}
        Let $\S$ and $\S'$ be set association schemes.
        \begin{enumerate}
            \item $\S \oplus \S'$ is a set association scheme.
            \item $\rank(\S \oplus \S') = \rank(\S) \rank(\S')$.
            \item $\Aut(\S \oplus \S') = \Aut(\S) \times \Aut(\S')$.
            \item $\S \oplus \S'$ is schurian $\iff$ $\S$ is schurian and $\S'$ is schurian.
            \item For $m \geq 1$, $[m]^{\S \oplus \S'} \cong [m]^\S \otimes [m]^{\S'}$.
        \end{enumerate}
    \end{lemma}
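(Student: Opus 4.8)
The plan is to work throughout with the identification of a subset $b \subseteq \Omega \sqcup \Omega'$ with the pair $(b \cap \Omega,\, b \cap \Omega') \in 2^\Omega \times 2^{\Omega'}$. Under this identification the cell of $\S \oplus \S'$ containing $b$ is $\alpha \oplus \alpha'$, where $\alpha \in \S$ is the cell of $b \cap \Omega$ and $\alpha' \in \S'$ is the cell of $b \cap \Omega'$; in particular $\S \oplus \S'$ really is a partition of $2^{\Omega \sqcup \Omega'}$ into nonempty cells, and $(\alpha, \alpha') \mapsto \alpha \oplus \alpha'$ is a bijection $\S \times \S' \to \S \oplus \S'$, since $\alpha$ and $\alpha'$ are recovered by projecting a cell to $\Omega$ and to $\Omega'$. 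This already gives (2), and condition (1) of \Cref{defn:SAS} for $\S \oplus \S'$ follows from $|b| = |b \cap \Omega| + |b \cap \Omega'|$.

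The heart of the matter — the one step I expect to require real care — is the coherence clause of (1). The point is that both the triangle condition and the type of a triple decompose along the disjoint union: a triple $(b_1, b_2, b_3)$ in $2^{\Omega \sqcup \Omega'}$ is a triangle if and only if its restrictions to $\Omega$ and to $\Omega'$ are both triangles, and the type of $(b_1, b_2, b_3)$ is determined by the pair consisting of the type of its restriction to $\Omega$ and the type of its restriction to $\Omega'$, because every intersection cardinality of $(b_1,b_2,b_3)$ is the sum of the corresponding cardinalities on the two sides. Consequently, fixing a triangle type $\tau$ and $b_1 \in \alpha \oplus \alpha'$, writing $a = b_1 \cap \Omega$, $a' = b_1 \cap \Omega'$, and splitting each of $b_2, b_3$ into its $\Omega$- and $\Omega'$-parts, one obtains
\[
    p^{b_1}_{(\beta \oplus \beta')(\gamma \oplus \gamma');\tau} = \sum_{(\sigma,\sigma')} p^{a}_{\beta\gamma;\sigma}\, p^{a'}_{\beta'\gamma';\sigma'},
\]
where the sum runs over the finitely many pairs $(\sigma,\sigma')$ of triangle types of $\Omega$ and of $\Omega'$ that ``add up'' to $\tau$. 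By coherence of $\S$ and of $\S'$ each factor on the right is independent of the choice of $a \in \alpha$ and $a' \in \alpha'$, so the left side is independent of the choice of $b_1 \in \alpha \oplus \alpha'$, which is exactly coherence of $\S \oplus \S'$ with respect to $\tau$. No genuinely new idea is needed here beyond bookkeeping; this mirrors the computation in the proof of \Cref{lem:hamming-sandwiches}.

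Parts (3)--(5) should then be formal. For (3), $\Aut(\S) \times \Aut(\S')$ visibly acts on $\Omega \sqcup \Omega'$ fixing every cell $\alpha \oplus \alpha'$; conversely, using that $\{\emptyset\}$ is a cell of $\S'$ (by \Cref{defn:SAS}(1)), the cells of $\S \oplus \S'$ all of whose members lie in $2^\Omega$ are precisely the cells $\alpha \oplus \{\emptyset\} = \alpha$ with $\alpha \in \S$, and their union is $2^\Omega$; hence any $g \in \Aut(\S \oplus \S')$ preserves $2^\Omega$, so (looking at singletons) fixes $\Omega$ setwise, likewise $\Omega'$, and $g = (g|_\Omega, g|_{\Omega'})$ with $g|_\Omega$ fixing each cell of $\S$ and $g|_{\Omega'}$ each cell of $\S'$. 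For (4), the implication ($\Leftarrow$) is the remark $\S(G) \oplus \S(H) = \S(G \times H)$ recorded before the lemma; for ($\Rightarrow$) one combines $\S \oplus \S' = \S(\Aut(\S \oplus \S'))$ with (3) and that same remark to get $\S \oplus \S' = \S(\Aut(\S)) \oplus \S(\Aut(\S'))$, then reads off $\S = \S(\Aut(\S))$ and $\S' = \S(\Aut(\S'))$ using that $\oplus$ is injective on pairs. For (5), the restriction map $u \mapsto (u|_\Omega, u|_{\Omega'})$ is a bijection $[m]^{\Omega \sqcup \Omega'} \to [m]^\Omega \times [m]^{\Omega'}$ carrying the relation $R_{\alpha \oplus \alpha'}$ of $[m]^{\S \oplus \S'}$ onto $R_\alpha \otimes R_{\alpha'}$, since $\{i : u_i \neq v_i\} \in \alpha \oplus \alpha'$ if and only if its parts in $\Omega$ and $\Omega'$ lie in $\alpha$ and $\alpha'$ respectively; as both sides are coherent configurations (by (1) together with \Cref{lem:hamming-sandwiches}(1), and because the tensor product of coherent configurations is coherent) this is an isomorphism of association schemes.
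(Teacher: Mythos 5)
Your proof is correct and follows essentially the same route as the paper's: the coherence of $\S \oplus \S'$ is verified by the identical factorization of the triangle count as a sum of products $p^\alpha_{\beta\gamma;\sigma} p^{\alpha'}_{\beta'\gamma';\sigma'}$ over pairs of triangle types combining to $\tau$, and parts (2)--(5) are handled by the same observations (the paper pins down $\Aut(\S\oplus\S')$ via the fixed cell $\{\Omega \sqcup \emptyset\}$, which is a marginally more direct version of your argument for (3)). No substantive difference.
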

    \begin{proof}
    (1) Clearly cells determine size. We must check coherence.
    Let $\tau$ be a triangle type, let $\alpha\oplus\alpha', \beta\oplus\beta', \gamma\oplus\gamma' \in \S\oplus\S'$,
    and let $a \sqcup a' \in \alpha \oplus \alpha'$.
    Then the number of $(b\sqcup b', c\sqcup c') \in (\beta \oplus \beta') \times (\gamma \oplus \gamma')$ such that $(a \sqcup a', b \sqcup b', c \sqcup c') \in \tau$
    is
    \[
        \sum_{\tau_1, \tau_2} p^\alpha_{\beta\gamma;\tau_1} p^{\alpha'}_{\beta'\gamma';\tau_2},
    \]
    where the sum goes over all pairs of triangle types $\tau_1$ on $\Omega$ and $\tau_2$ on $\Omega'$ such that if $(a, b, c) \in \tau_1$ and $(a', b', c') \in \tau_2$ then $(a \sqcup a', b \sqcup b', c \sqcup c') \in \tau$.
    Thus we have coherence.
    
    (2) Clear.
    
    (3) Let $\pi \in \Aut(\S \oplus \S')$. Since $\pi$ fixes the cell $\Omega \sqcup \emptyset$, $\pi$ respects the partition of $\Omega \sqcup \Omega'$ into $\Omega$ and $\Omega'$,
    and clearly the set of $\pi \in \Aut(\S \oplus \S')$ respecting this partition is $\Aut(\S) \times \Aut(\S')$.
    
    (4) If $\S = \S(G)$ and $\S' = \S(H)$ then $\S \oplus \S' = \S(G \times H)$. On the other hand if $\S \oplus \S'$ is schurian then, by (3),
    \[
        \S \oplus \S' = \S(\Aut(\S \oplus \S')) = \S(\Aut(\S) \times \Aut(\S')) = \S(\Aut(\S)) \oplus \S(\Aut(\S')),
    \]
    which implies $\S = \S(\Aut(\S))$ and $\S' = \S(\Aut(\S'))$.
    
    (5) A definition-chase.
    \end{proof}
    
    A nontrivial direct sum $\S \oplus \S'$ is never homogeneous, whereas we are principally interested in homogeneous set association schemes.
    Another construction preserves homogeneity.
    Let $\S^k$ be the direct sum of $k$ copies of $\S$.
    If $G \leq \Sym(k)$ then $G$ permutes the cells of $\S^k$.
    Fusing the orbits of $G$ produces a new set association scheme which we call the \emph{wreath product} $\S \wr G$.
    The term is reasonable since if $H \leq \Sym(\Omega)$ then $\S(H) \wr G = \S(H \wr G)$, where $H \wr G \leq \Sym(\Omega \times [k])$ has the imprimitive action.
    
    \begin{lemma}
    \label{lem:wreath-product}
    Let $\S$ be a set association scheme and $G \leq \Sym(k)$, $k \geq 1$.
        \begin{enumerate}
            \item $\S \wr G$ is a set association scheme.
            \item $\rank(\S \wr G) = |[\rank(\S)]^k / G|$.
            \item $\S \wr G$ is homogeneous $\iff$ $\S$ is homogeneous and $G$ is transitive.
            \item $\Aut(\S \wr G) = \Aut(\S) \wr \bar G$, where $G \leq \bar G \leq \Sym(k)$.
            \item $\S \wr G$ is schurian $\iff$ $\S$ is schurian.
            \item For $m \geq 1$, $[m]^{\S \wr G} \cong [m]^\S \uparrow G$.
        \end{enumerate}
    \end{lemma}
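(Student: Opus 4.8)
The plan is to exploit that $\S \wr G$ is, by construction, a fusion of the $k$-fold direct sum $\S^{k} = \S \oplus \cdots \oplus \S$, carried on the vertex set $\Omega \times [k]$, by the group $G \le \Sym(k)$ acting as weak automorphisms that permute the $k$ summands. Write $\Omega^{(i)} = \Omega \times \{i\}$ and, for $a \subseteq \Omega$, write $a^{(i)} \subseteq \Omega^{(i)}$ for the corresponding copy; the cells of $\S^{k}$ are the sets $\alpha_{1} \oplus \cdots \oplus \alpha_{k}$ with $\alpha_{i} \in \S$, hence are indexed by $[\rank(\S)]^{k}$, with $\Sym(k)$ acting by permuting coordinates. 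By the direct-sum lemma above, $\S^{k}$ is itself a set association scheme with $\Aut(\S^{k}) = \Aut(\S)^{k}$, and most assertions then follow from general facts about fusing the orbits of a group of weak automorphisms, in exact parallel with how exponentiation of coherent configurations is built from tensor powers \cite{godsil}.

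For (1) and (2): cells of $\S \wr G$ have a well-defined size because $G$ preserves the function $(a_{1}, \dots, a_{k}) \mapsto \sum_{i} |a_{i}|$ on $2^{\Omega \times [k]}$; and coherence with respect to a triangle type $\tau$ holds because, for $a$ lying in the $\S^{k}$-cell $\alpha$ inside the $\S \wr G$-cell $\hat{\alpha} = \bigcup_{g \in G} \alpha^{g}$, the count $p^{a}_{\hat{\beta}\hat{\gamma};\tau}$ equals $\sum p^{\alpha}_{\beta'\gamma';\tau}$ over the $\S^{k}$-cells $\beta' \subseteq \hat{\beta}$, $\gamma' \subseteq \hat{\gamma}$, and applying $g \in G$ --- a permutation of $\Omega \times [k]$, hence a weak automorphism of $\S^{k}$ preserving every triple type --- reindexes this sum, showing it independent of the choice of $\alpha \subseteq \hat\alpha$. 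Assertion (2) is then immediate, the cells of $\S \wr G$ being exactly the $G$-orbits on $[\rank(\S)]^{k}$.

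For (3) and (6): a singleton $\{(w,i)\} \subseteq \Omega \times [k]$ lies in the $\S^{k}$-cell having the $\S$-cell of $\{w\}$ in coordinate $i$ and $\{\emptyset\}$ elsewhere, whose $G$-orbit moves only the coordinate $i$ within its $G$-orbit, so all singletons lie in one cell of $\S \wr G$ exactly when $\S$ is homogeneous (all $\{w\}$ in one $\S$-cell) and $G$ is transitive, giving (3). Assertion (6) is a definition-chase: by the direct-sum lemma $([m]^{\S})^{k} \cong [m]^{\S^{k}}$, with the relation indexed by the cell $\alpha$ of $\S^{k}$, and the actions of $G$ on the relations of $([m]^{\S})^{k}$ and on the cells of $\S^{k}$ agree under this identification, so fusing $G$-orbits on both sides gives $[m]^{\S} \uparrow G \cong [m]^{\S \wr G}$.

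The crux is (4), from which (5) follows. I would define $\bar G \le \Sym(k)$ to be the stabiliser in $\Sym(k)$ of the partition of $[\rank(\S)]^{k}$ into $G$-orbits --- equivalently, the largest subgroup of $\Sym(k)$ with the same orbits on $[\rank(\S)]^{k}$ as $G$ --- so that $G \le \bar G$. Since $\S \wr G$ is a fusion of $\S^{k}$ we get $\Aut(\S)^{k} = \Aut(\S^{k}) \le \Aut(\S \wr G)$. Conversely, any $\pi \in \Aut(\S \wr G)$ fixes, for each $i$, the $\S \wr G$-cell whose members are exactly the $\Omega^{(j)}$ with $j$ in the $G$-orbit of $i$, so $\pi$ permutes $\{\Omega^{(1)}, \dots, \Omega^{(k)}\}$ and hence lies in $\Sym(\Omega) \wr \Sym(k)$ in its imprimitive action; writing $\pi = (\rho_{1}, \dots, \rho_{k}; \sigma)$, restriction to the subsets supported on a single copy of $\Omega$ forces each component to lie in $\Aut(\S)$, and then --- using that an automorphism of $\S$ fixes each cell of $\S$ --- the image of the $\S^{k}$-cell indexed by $(\alpha_{i})$ is the one indexed by $(\alpha_{\sigma^{-1}(i)})$, so preserving all $\S \wr G$-cells forces $\sigma \in \bar G$; altogether $\Aut(\S \wr G) = \Aut(\S) \wr \bar G$. (Alternatively, deduce (4) from (6) together with \Cref{lem:prop-correspondence}(3), the classical automorphism formula for exponentiations \cite{godsil}, and associativity of the imprimitive wreath product.) Finally, for (5): if $\S = \S(H)$ then $\S \wr G = \S(H \wr G)$ is schurian; and if $\S \wr G$ is schurian then, by (4) and the identity $\S(H \wr G') = \S(H) \wr G'$, we have $\S \wr G = \S(\Aut(\S) \wr \bar G) = \S(\Aut(\S)) \wr \bar G$, and comparing these two partitions of $2^{\Omega \times [k]}$ on the subsets supported on a single copy of $\Omega$ forces $\S = \S(\Aut(\S))$. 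The main obstacle is thus (4) --- precisely, showing that an abstract automorphism of $\S \wr G$ cannot permute or blend the copies of $\Omega$ in any way not already accounted for by $\Sym(k)$, and correctly identifying $\bar G$; the singleton cells $\{\Omega^{(i)}\}$ and a coordinate-by-coordinate analysis are the tools for this.
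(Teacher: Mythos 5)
Your proposal is correct and follows essentially the same route as the paper: both arguments rest on $\S \wr G$ being a fusion of $\S^k$, use the cells $\{\Omega_1\},\dots,\{\Omega_k\}$ to force automorphisms to permute the blocks, and identify $\bar G$ as the subgroup of $\Sym(k)$ preserving each $G$-orbit on $[\rank(\S)]^k$. The only cosmetic difference is in (4), where the paper first treats $G=\Sym(k)$ and then applies the modular law to the sandwich $\Aut(\S^k)\le \Aut(\S\wr G)\le \Aut(\S\wr\Sym(k))$, whereas you decompose an automorphism coordinate-by-coordinate directly; both yield the same conclusion.
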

    \begin{proof}
        (1) Routine.
        
        (2) Let $\S = \{\alpha_1, \dots, \alpha_r\}$, where $r = \rank(\S)$. Then the cells of $\S^k$ are
        ${\alpha_{i_1} \sqcup \cdots \sqcup \alpha_{i_k}}$ for $i_1, \dots, i_k \in [r]$.
        The $G$-orbit of ${\alpha_{i_1} \sqcup \cdots \sqcup \alpha_{i_k}}$ is the union of all ${\alpha_{j_1} \sqcup \cdots \sqcup \alpha_{j_k}}$ for $(j_1, \dots, j_k) \in (i_1, \dots, i_k)^G$.
        Hence $\S \wr G$ has $|[r]^k / G|$ cells.
        
        (3) Clear.
        
        (4) Let $A = \Aut(\S \wr G)$, and first suppose $G = \Sym(k)$.
        Write the vertex set of $\S^k$ as $\Omega_1 \cup \cdots \cup \Omega_k$,
        where $\Omega_1, \dots, \Omega_k$ are the disjoint copies of $\Omega$.
        Then $\{\Omega_1\}, \dots, \{\Omega_k\}$ are cells of $\S^k$
        and $\pi = \{\Omega_1, \dots, \Omega_k\}$ is a cell of $\S \wr \Sym(k)$,
        so $A$ preserves the partition $\pi$.
        Let $B \leq A$ be the subgroup of $A$ preserving each $\Omega_i$ individually.
        Then clearly $A = B \Sym(k)$, and $B = \Aut(\S^k) = \Aut(\S)^k$, since $B$ must preserve each cell of $\S \wr \Sym(k)$ of the form $\alpha \oplus \cdots \oplus \alpha$ for $\alpha \in \S$.
        Now for $G$ arbitrary we have
        \[
            B = \Aut(\S^k) \leq A \leq \Aut(\S \wr \Sym(k)) = B \Sym(k),
        \]
        so by the modular property of groups we have
        \[
            A = B (A \cap \Sym(k)) = \Aut(\S) \wr (A \cap \Sym(k)).
        \]
        The intersection $\bar G = A \cap \Sym(k)$ is the subgroup of $\Sym(k)$ preserving each orbit of $G$ in $[\rank(\S)]^k$.
        
        (5) If $\S = \S(H)$ then $\S \wr G = \S(H \wr G)$. On the other hand if $\S \wr G$ is schurian, then, by (4),
        \[
            \S \wr G = \S(\Aut(\S \wr G)) = \S(\Aut(\S) \wr \bar G) = \S(\Aut(\S)) \wr \bar G.
        \]
        By considering the cells of the form $\alpha \oplus \cdots \oplus \alpha$, it follows that $\S = \S(\Aut(\S))$.
        
        (6) Another definition-chase.
    \end{proof}
    
    \section{Set association schemes of degree at most 8}
    
    The coherence axiom (part (2) of \Cref{defn:SAS}) is so stringent that one may not expect nonschurian set association schemes exist.
    The situation appears comparable with that of \emph{superscheme} theory, introduced by Johnson and Smith~\cite{johnson-smith}, which axiomatizes the combinatorial properties of the \emph{ordered} orbital relations of arbitrary arity,
    and the main theorem of superscheme theory is that all superschemes are schurian (see \cite{smith}*{Theorem~8.5}).
    It may come as a surprise therefore that nonschurian set association schemes do exist.
    Indeed there are several of degree 8.
    
    \begin{proposition}
        \label{prop:nonschurian-enumeration}
        Let $\Omega = [d]$.
        \begin{enumerate}
        \item If $d \leq 7$, all set association schemes on $\Omega$ are schurian.
        \item If $d = 8$, there are eight nonschurian set association schemes up to isomorphism, summarized in \Cref{table:nonschurian8}.
        \end{enumerate}
    \end{proposition}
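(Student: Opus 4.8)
The plan is to prove both parts by an exhaustive search over a drastically reduced space of candidate partitions, and, for part~(2), to verify the eight schemes of \Cref{table:nonschurian8} directly. The key first step is to cut the search space down. A set association scheme $\S$ on $\Omega = [d]$ is a partition of $2^\Omega$ which by \Cref{defn:SAS}(1) refines the partition into levels $\binom{\Omega}{k}$, and which by \Cref{lem:basic-props}(1) is invariant under $a \mapsto a^c$. Hence $\S$ is completely determined by its cells of sizes $0, 1, \dots, \lfloor d/2 \rfloor$ (the higher levels being their complements), subject, when $d$ is even, to the partition of the middle level $\binom{\Omega}{d/2}$ being complementation-invariant; only $\{\emptyset\}$ and $\{\Omega\}$ are forced a priori, since we do not assume homogeneity. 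Candidates can therefore be built up one level at a time, from the smallest levels.

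Next I would enumerate candidates by backtracking, fixing the cells level by level and pruning a partial partition the moment it contradicts a consequence of coherence visible from the levels already fixed --- most usefully: biregularity of every containment graph between two fixed cells (\Cref{lem:basic-props}(2)), and, for every triangle type whose three ``legs'' lie in fixed levels, constancy of the count $p^a_{\beta\gamma;\tau}$ as $a$ ranges over a fixed cell. Once all levels are fixed, test full coherence against the finite list of triangle types on $[d]$ (vectors of intersection sizes). To keep both the running time and the output manageable I would reject isomorphic copies under the $\Sym(d)$-action on partitions of $2^\Omega$ (orderly generation, or comparison of a canonical form). For each surviving $\S$, compute the strong automorphism group $G = \Aut(\S) \le \Sym(d)$, form $\S(G) = 2^\Omega / G$, and test whether $\S = \S(G)$: when $d \le 7$ this always holds, and when $d = 8$ it fails for precisely eight isomorphism classes, namely those of \Cref{table:nonschurian8}. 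One may instead organize the search around subgroups: since every cell of $\S$ is a union of $\Aut(\S)$-orbits, $\S(\Aut(\S))$ refines $\S$ with the same automorphism group, so every set association scheme is a coherent fusion of $\S(\Aut(\S))$; one can thus loop over conjugacy classes of subgroups $G \le \Sym(8)$ and, for each, search the coarsenings of the usually small partition $\S(G)$ for coherent fusions whose automorphism group is exactly $G$.

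Given \Cref{table:nonschurian8}, part~(2) then requires only a finite, self-contained check of each listed $\S$: that it satisfies \Cref{defn:SAS} (constant size on cells; coherence with respect to every triangle type) and that it is nonschurian, i.e.\ that some cell of $\S$ splits into at least two $\Aut(\S)$-orbits, equivalently $\S(\Aut(\S)) \lneq \S$. This does not depend on the exhaustiveness of the search. The real obstacle is the size of the search for $d = 8$: the middle level $\binom{[8]}{4}$ alone has $70$ elements, whose partitions number far more than one could ever list blindly, so tractability hinges entirely on how sharply the coherence constraints (biregularity of containment graphs, partial structure-constant consistency) together with complementation-closure prune the tree, and on isomorph rejection keeping the intermediate lists short. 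Designing these prunings so that the computation actually terminates --- and implementing them correctly --- is where the work, and the reliance on a computer, lies.
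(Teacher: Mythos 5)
Your proposal matches the paper in substance: both parts ultimately rest on a finite computer search, with part~(2) requiring only a machine check that the eight listed partitions are coherent and satisfy $\S(\Aut(\S)) \lneq \S$, and the reduction to levels $0,\dots,\lfloor d/2\rfloor$ via complementation-closure (\Cref{lem:basic-props}(1)) is correct. Where you genuinely differ is in how the exhaustive enumeration is organized, and this is the part that decides whether the computation terminates. You propose backtracking over the partitions of each level $\tbinom{\Omega}{k}$ in turn; for $d=8$ the middle level $\tbinom{[8]}{4}$ has $70$ elements, so even with biregularity and partial structure-constant pruning you are navigating a Bell-number-sized space, and you correctly flag that feasibility ``hinges entirely'' on the prunings without supplying the idea that makes it work. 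The paper instead traverses the lattice of coherent partitions from the top: starting from the trivial scheme, it repeatedly splits a \emph{single} cell $\alpha_i$ into just \emph{two} parts $\alpha$ and $\alpha_i \setminus \alpha$ and then applies a set-scheme analogue of Weisfeiler--Leman stabilization (the coarsest coherent refinement). The completeness observation --- every coherent strict refinement of a coherent $\S$ refines at least one such two-part split after stabilization --- means one never enumerates arbitrary partitions of a level, only two-part splits, further restricted to ``design-like'' subsets (the rest are subsumed after stabilization splits the lower cell) and to one representative per weak-automorphism orbit. That single-split-plus-stabilization device, together with computing $\Aut(\S)$ first to quotient the triple enumeration, is what turns the search into a few days of computation; without it, or some equally strong substitute, your level-by-level backtracking would not plausibly finish for $d=8$. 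Your direct, search-independent verification of the schemes in \Cref{table:nonschurian8} for part~(2) is exactly what the paper does.
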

    
    \begin{table}[ht]
        \def\yes{{\color{OliveGreen} Yes}}
        \def\no{{\color{Maroon} No}}
        \begin{center}
            \rowcolors{2}{gray!10}{white}
            \begin{tabular}{ccccccc}
        %        \rowcolor{gray!30}
                & $\rank(\S)$ & $\Aut(\S)$ & homogeneous? & vertex-transitive? & fully coherent?\\
                 \hline
                $\S_1$ & 25 & $Q_8 \circ C_4$ & \yes & \yes & \yes\\
                $\S_2$& 30 & $Q_8 \circ C_4$ & \yes & \yes & \yes\\
                $\S_3$ & 28 & $Q_8$ & \yes & \yes & \yes\\
                $\S_4$ & 36 & $Q_8$ & \yes & \yes & \yes\\
                $\S_5$ & 28 & $C_4 \times C_2$ & \yes & \no & \yes\\
                $\S_6$ & 51 & $C_4 \times C_2$ & \no & \no & \yes\\
                $\S_7$ & 43 & $C_4 \times C_2$ & \no & \no & \yes\\
                $\S_8$ & 49 & $C_4 \times C_2$ & \no & \no & \yes\\
            \end{tabular}
        \end{center}
        \caption{Nonschurian set association schemes of degree 8 up to isomorphism}
        \label{table:nonschurian8}
    \end{table}
    
    Notably, the set association scheme $\S = \S_5$, of degree 8 and rank 28, is homogeneous but $\Aut(\S)$ is \emph{intransitive}.
    By \Cref{lem:prop-correspondence}, the corresponding Hamming sandwich $\X = [m]^\S$ has the following properties, for any $m \geq 3$:
    \begin{enumerate}
        \item the degree of $\X$ is $n = m^8$,
        \item $\rank(\X) = 28$,
        \item $\X$ is primitive,
        \item $\Aut(\X)$ is imprimitive,
        \item $|\Aut(\X)| = 8m!^8 > \exp(m) = \exp(n^{1/8})$.
    \end{enumerate}
    Also, since $\Aut(\X) = \Sym(m) \wr \Aut(\S)$, $\mu(\Aut(\X)) = 2m^7$ and $\theta(\Aut(\X)) = m$.
    This proves \Cref{thm:main}, subject to the construction of $\S_5$.
    
    Additionally we may form the wreath product $\S = \S_5 \wr G$ for any transitive permutation group $G \leq \Sym(k)$ for any $k \geq 1$.
    By \Cref{lem:wreath-product}, $\S$ is again homogeneous and vertex-intransitive,
    but of larger degree and rank.
    Thus we have the following corollary.
    
    \begin{corollary}
        There are vertex-intransitive homogeneous set association schemes of arbitrarily large rank.
    \end{corollary}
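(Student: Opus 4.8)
The plan is simply to feed the scheme $\S_5$ from \Cref{prop:nonschurian-enumeration} into the wreath-product construction of \Cref{lem:wreath-product} with a growing ambient group. Fix $k \geq 1$, let $G = \Sym(k)$, and set $\S = \S_5 \wr \Sym(k)$, a set association scheme on a vertex set of size $8k$. I would then verify the three required properties --- homogeneity, unbounded rank, and vertex-intransitivity --- one at a time, each being an immediate consequence of the corresponding clause of \Cref{lem:wreath-product} together with the data recorded for $\S_5$ in \Cref{table:nonschurian8}.

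First, homogeneity: by \Cref{table:nonschurian8} the scheme $\S_5$ is homogeneous, and $\Sym(k)$ is transitive, so $\S_5 \wr \Sym(k)$ is homogeneous by \Cref{lem:wreath-product}(3). Second, rank: by \Cref{lem:wreath-product}(2), $\rank(\S_5 \wr \Sym(k)) = |[28]^k / \Sym(k)| = \binom{k+27}{k}$, which tends to infinity with $k$, so these schemes have arbitrarily large rank. Third, vertex-intransitivity: by \Cref{lem:wreath-product}(4) we have $\Aut(\S) = \Aut(\S_5) \wr \Sym(k)$ in its imprimitive action on $\Omega \times [k]$ (the constraint $G \leq \bar G \leq \Sym(k)$ forces $\bar G = \Sym(k)$); since the top group $\Sym(k)$ is transitive on $[k]$, the orbits of this action are exactly the sets $O \times [k]$ with $O$ an orbit of $\Aut(\S_5)$ on $\Omega$. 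As $\Aut(\S_5)$ is intransitive on $\Omega$ by \Cref{table:nonschurian8}, it has at least two orbits, hence $\Aut(\S)$ has at least two orbits on $\Omega \times [k]$; that is, $\S$ is vertex-intransitive.

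There is essentially no obstacle here: the entire argument is bookkeeping against \Cref{lem:wreath-product}. The only points that merit a line of care are (i) recalling that ``vertex-transitive'' for a set association scheme means transitivity of $\Aut(\S)$ on $\Omega$ (not on $2^\Omega$), and (ii) the elementary but standard description of the orbits of an imprimitive wreath product $H \wr K$ on $\Delta \times \Gamma$ when $K$ is transitive on $\Gamma$, which supplies the ``orbit $\times$ block system'' structure used in the third step. One could equally well take $G$ to be any transitive subgroup of $\Sym(k)$, e.g.\ a cyclic group, and still obtain unbounded rank, since then $\bar G$ is again transitive and $\rank(\S_5 \wr G) = |[28]^k/G| \to \infty$.
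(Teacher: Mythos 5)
Your proof is correct and follows exactly the paper's route: the paper also obtains the corollary by forming $\S_5 \wr G$ for a transitive $G \leq \Sym(k)$ and reading off homogeneity, rank growth, and vertex-intransitivity from \Cref{lem:wreath-product} together with the properties of $\S_5$ recorded in \Cref{table:nonschurian8}. Your explicit rank count $\binom{k+27}{k}$ and the orbit description of the imprimitive wreath product are just slightly more detailed versions of what the paper leaves implicit.
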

    
    \def\sage{\texttt{sage}}
    \def\gap{\texttt{GAP}}
    
    In the rest of this section we describe the construction of $\S_1, \dots, \S_8$
    and sketch the verification of \Cref{prop:nonschurian-enumeration}.
    We do not prove the coherence of $\S_1, \dots, \S_8$ by hand but rely on a computer-check.
    All calculations were done in \sage/\gap.
    Source code may be found at \href{https://github.com/seaneberhard/sassy}{https://github.com/seaneberhard/sassy}.
    
    \subsection{Constructions}
    
    Each of the constructions we give has the following general form.
    Let $H \leq G \leq \Sym(d)$ be permutation groups such that $[G:H] = 2$.
    Then $\S(H) \leq \S(G)$ and there are some $k = \rank(\S(H)) - \rank(\S(G))$ cells $\alpha_1, \dots, \alpha_k \in \S(G)$ that split in two in $\S(H)$.
    The partition $\S$ is obtained from $\S(G)$ by splitting the cells in some proper nonempty subset of $\{\alpha_1, \dots, \alpha_k\}$.
    If $\S$ is coherent (there is no reason in general it should be)
    then so is the partition $\S'$ obtained by splitting the cells in the complementary subset instead,
    so in this circumstance we have two nonschurian set association schemes $\S$ and $\S'$ with $\Aut(\S) = \Aut(\S') = H$.
    
    \begin{figure}[ht]
    \[
        \begin{tikzcd}
            G \ar[dd, no head, "{[G:H] = 2}"] &&& \S(G) \ar[dl, no head] \ar[dr, no head] \\
            &&\S \ar[dr, no head] && \S' \ar[dl, no head] \\
            H &&& \S(H)
        \end{tikzcd}
    \]
    \caption{Two nonschurian set association schemes}
    \label{fig:S1S2}
    \end{figure}
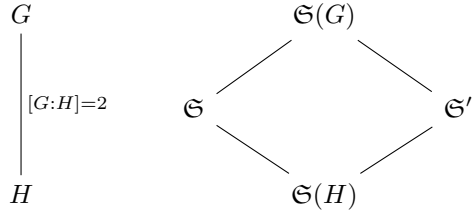
    
    \subsubsection{$\S_1$ and $\S_2$}
    
    Let
    \[
        G_1 = \{ax + b : a \in (\Z/8\Z)^\times, b \in \Z/8\Z\} \cong C_8 \rtimes (C_2 \times C_2)
    \]
    (\texttt{TransitiveGroup(8, 15)} in \gap)
    be the affine group modulo 8 acting naturally on $\Omega = \Z/8\Z$.
    Then $\S(G_1)$ is a schurian scheme of rank 24.
    Let $\alpha_1 = \{0, 1, 2, 3\}^G$.
    Then $\alpha_1$ consists of all arithmetic progressions of length 4 and odd common difference.
    Let $\alpha \subset \alpha_1$ consist of all arithmetic progressions $\{a, a+d, a+2d, a+3d\}$ with $a$ even and $d \in \{1, 5\}$.
    The scheme $\S_1$, of rank 25, is obtained by splitting $\alpha_1$ into $\alpha$ and $\alpha_1 \setminus \alpha$.
    Then $H_1 = \Aut(\S_2)$ is an index-two subgroup of $G_1$ and $\S_2$ is related as described above.
    The group $H_1 = \langle x+2, 3x+1, 5x\rangle$ is internally the central product of $\langle x+2, 3x+1 \rangle \cong Q_8$ and $\langle 5x + 2 \rangle \cong C_4$; it is sometimes called the \emph{Pauli group}.
    
    \subsubsection{$\S_3$ and $\S_4$}
    
    The schemes $\S_3$ and $\S_4$ are obtained in the same way but starting with the quasidihedral group
    \[
        G_2 = \{ax + b : a \in \{1, 3\}, b \in \Z/8\Z\} \leq G_1
    \]
    (\texttt{TransitiveGroup(8, 8)} in \gap).
    We split the same cell $\alpha_1$ as before into the same two cells $\alpha$ and $\alpha_1 \setminus \alpha$ to obtain $\S_3$, and $\S_4$ is related as before.
    Here $H_2 = \Aut(\S_3)$ is the regular subgroup $\langle x+2, 3x+1 \rangle \cong Q_8$.
    
    This construction can be described alternatively as follows.
    Identify $\Omega$ with $Q_8 = \{1, i, j, k, -1, -i, -j, -k\}$ in such a way that $H_2 = Q_8$ in the right regular action.
    Then $H_2$ has two orbits of 4-sets of the form $\{\pm1, \pm i, \pm j, \pm k\}$.
    Fusing those two orbits produces $\S_4$.
    
    \subsubsection{$\S_5$ and $\S_6$}
    
    The schemes $\S_5$ and $\S_6$ are obtained in the same way but starting with the modular 16-group 
    \[
        G_3 = \{ax + b : a \in \{1, 5\}, b \in \Z/8\Z\} \leq G_1
    \]
    (\texttt{TransitiveGroup(8, 7)} in \gap).
    Again we split the same cell $\alpha_1$ into the same two cells $\alpha$ and $\alpha_1 \setminus \alpha$ to obtain $\S_5$, and $\S_6$ is related as before.
    In this case $H_3 = \Aut(\S_5)$ is the intransitive subgroup $\langle x+2, 5x\rangle \cong C_4 \times C_2$.
    
    \subsubsection{$\S_7$ and $\S_8$}
    
    In this case we start with the intransitive group
    \[
        G_4 = \langle (1,3,5,7)(2, 4, 6, 8), (2,4)(6,8) \rangle \cong (C_4 \times C_2) \rtimes C_2.
    \]
    The scheme $\S_7$ is derived from $\S(G_4)$ by separating $\alpha$, $\alpha^c$, and $\beta$, where
    \begin{gather*}
        \alpha = \{\{1, 2, 3\},
         \{1, 3, 6\},
         \{1, 4, 7\},
         \{1, 7, 8\},
         \{2, 5, 7\},
         \{3, 4, 5\},
         \{3, 5, 8\},
         \{5, 6, 7\}\},\\
         \beta = \{\{1, 2, 3, 6\}, \{1, 4, 7, 8\}, \{2, 5, 6, 7\}, \{3, 4, 5, 8\}\}.
    \end{gather*}
    Then $H_4 = \Aut(\S_7) = \langle (1,3,5,7)(2,4,6,8), (2,6)(4,8) \rangle \cong C_4 \times C_2$.
    
    \subsubsection{Two of degree 9}
    
    There are at least two nonschurian set association schemes of degree 9.
    Let
    \[
        G = \{ax + b : a \in (\Z/9\Z)^\times, b \in \Z/9\Z\} \cong C_9 \rtimes C_6
    \]
    (\texttt{TransitiveGroup(9, 10)} in \gap)
    be the affine group modulo 9 acting naturally on $\Omega = \Z/9\Z$.
    Let $H \leq G$ be the subgroup defined by restricting $a$ to $\{1, 4, 7\}$ (the modular 27-group).
    A nonschurian set association scheme is obtained from $\S(G)$ by splitting $\{1,2,3,5\}^G$ into $\{1,2,3,5\}^H$ and $\{1,2,3,6\}^H$, and another is related as in \Cref{fig:S1S2}.
    
    \begin{table}[hhhh]
        \def\yes{{\color{OliveGreen} Yes}}
        \def\no{{\color{Maroon} No}}
        \begin{center}
            \rowcolors{2}{gray!10}{white}
            \begin{tabular}{cccccccc}
                & $\rank(\S)$ & $\Aut(\S)$ & homogeneous? & vertex-transitive? & fully coherent?\\
                 \hline
                 $\S$ & 24 & $C_9 \rtimes C_3$ & \yes & \yes &\yes\\
                 $\S'$ & 26 & $C_9 \rtimes C_3$ & \yes & \yes &\yes\\
            \end{tabular}
        \end{center}
        \caption{Two nonschurian set association schemes of degree 9}
        \label{table:nonschurian9}
    \end{table}

    \subsection{Computational aspects}
    
    Checking coherence of a given partition $\S$ of $2^{[d]}$ naively requires inspecting $(2^d)^3$ triples $(a, b, c) \in (2^{[d]})^3$ and sorting them according to the value of $(\chi(a), \chi(b), \chi(c), |a \cap b|, |a \cap c|, |b \cap c|, |a \cap b \cap c|)$, where $\chi : 2^{[d]} \to \{0, \dots, \rank(\S) - 1\}$ is a coloring function representing $\S$. For $d = 8$, $(2^d)^3 \approx 1.7 \times 10^7$, so this is feasible on any modern computer.
    Thus checking coherence of $\S_1, \dots, \S_8$ is straightforward,
    but checking that there are no others requires further comment.
    
    The naive algorithm described above for checking coherence in general actually outputs a refinement $\S' \leq \S$ such that $\S' = \S$ if and only if $\S$ is coherent, and furthermore such that if $\S_0 \leq \S$ is any coherent partition then $\S_0 \leq \S'$.
    Iterating this algorithm therefore produces the coarsest coherent refinement of $\S$.
    This is a natural variant for set association schemes of the \emph{Weisfeiler--Leman} (WL) algorithm for coherent configurations, so we call it that,
    and the coarsest coherent refinement of $\S$ is called its \emph{WL stabilization}.
    
    In practice, it helps to compute $\Aut(\S)$ first. Then if $\S = \S(\Aut(\S))$ then $\S$ is schurian and therefore coherent. In general the WL algorithm does not alter $\Aut(\S)$ and it can be sped up by a factor of $|\Aut(\S)|$ by only enumerating those triples $(a, b, c)$ with $a$ in a given set of $\Aut(\S)$-orbit representatives.
    Enumerating $\Aut(\S)$-orbit representatives in $2^{[d]}$ can be done quickly using a union--find data structure.
    
    Now given an arbitrary coherent partition $\S$ of $2^{[d]}$, we can enumerate some coherent strict refinements $\S_1, \dots, \S_n$ of $\S$ such that any coherent refinement of $\S$ refines at least one of $\S_1, \dots, \S_n$, as follows.
    Write
    \[
        \S = \{\alpha_0, \dots, \alpha_r, \alpha_r^c, \dots, \alpha_0^c\}
    \]
    (there will be duplicates here if any $\alpha_i = \alpha_i^c$), where
    \[
        0 = |a_0| \leq \cdots \leq |a_r| = \floor{d/2} \qquad (a_i \in \alpha_i).
    \]
    Now for $i = 0, \dots, r$ and each subset $\alpha \subset \alpha_i$ such that $0 < |\alpha| \leq |\alpha_i| / 2$,
    let $\S[\alpha]$ be the WL stabilization of the partition formed from $\S$ by splitting $\alpha_i$ into $\alpha$ and $\alpha_i \setminus \alpha$, and, if it is not already in the list, name it $\S_k$ for the next available index $k$.
    
    In fact, we need only consider the subsets $\alpha \subset \alpha_i$ which are ``design-like'' with respect to $\alpha_j$ for $j < i$, in the sense that the bipartite graph
    \[
        \{(a, b) \in \alpha_j \times \alpha : a \subset b\}
    \]
    is biregular, because if $\alpha$ were not design-like with respect to $\alpha_j$ then the WL algorithm would split $\alpha_j$, and hence $\S[\alpha]$ already refines some previous $\S_k$.
    Design-like $\alpha$ can be enumerated using either backtracking or integer programming (this is the most prohibitive part of the process).
    
    Moreover, it suffices to consider only one $\alpha$ per weak-automorphism-orbit, since $\S[\alpha] \cong \S[\alpha']$ for $\alpha, \alpha'$ in the same orbit.
    For example if $\S$ is trivial and $i=2$, arbitrary subsets of $\alpha_2$ correspond to arbitrary undirected graphs on $[d]$, but it suffices to consider only regular graphs up to isomorphism, of which there many fewer (though still many).
    
    By starting with the trivial scheme and iterating, we end up enumerating the entire lattice of set association schemes up to isomorphism, including any nonschurian schemes.
    The whole process for $d \leq 8$ took a few days on the author's laptop.

    \section{Cameron sandwiches}
    
    If $\X$ is a partition of $( \binom{[m]}{k}^d )^2$ such that $\J(m, k)^d \leq \X$ then there is a partition $\S$ of $\{0, \dots, k\}^d$ such that $\X = \{R_\alpha : \alpha \in \S\}$, where
    \[
        (u, v) \in R_\alpha \iff (|u_1 \setminus v_1|, \dots, |u_d \setminus v_d|) \in \alpha.
    \]
    If $\X$ and $\S$ are related in this way we write $\X = \J(m, k)^\S$.
    Clearly $\X$ is a configuration if and only if $\{(0, \dots, 0)\} \in \S$,
    and $\X \leq \Cam(m, k, d)$ if and only if
    \begin{equation}
        \label{eq:sandwich-condition}
        a, a' \in \alpha \in \S \implies a^{\Sym(d)} = (a')^{\Sym(d)}.
    \end{equation}
    If $\J(m, k)^d \leq \X \leq \Cam(m, k, d)$ and $\X$ is coherent then we call $\X$ a \emph{Cameron sandwich}.
    In this section we develop some theory of Cameron sandwiches generalizing that of Hamming sandwiches (even though no nonschurian examples are known for $k > 1$).
    
    The vertices of the Johnson scheme $\J(m, k)$ are $k$-subsets $v \subset [m]$, and the color of a pair $(u, v)$ is by definition $|u \setminus v| \in \{0, \dots, k\}$.
    Note that $|u \setminus v| = |v \setminus u|$, since $|u| = |v| = k$.
    The hypothesis $k \leq m/2$ ensures that all colors appear.
    The structure constants are
    \begin{equation}
        \label{eq:p-formula}
        p^a_{bc}(m) = \sum_i \binom{k-a}{i} \binom{a}{k-b-i} \binom{a}{k-c-i} \binom{m-k-a}{b + c + i - k},
    \end{equation}
    where the summation extends over all integers and binomial coefficients are considered zero when out of bounds.
    Note that $p^a_{bc}$ is a polynomial in $m$ with rational coefficients.
    
    \begin{lemma}
        \label{lem:delta-condition}
        For $a, b, c \in \{0, \dots, k\}$,
        $p^a_{bc}(m) > 0$ if and only if
        \begin{equation}
            \label{eq:delta}
            a \leq b + c, \quad
            b \leq a + c, \quad
            c \leq a + b,
            \tag{$\Delta$}
        \end{equation}
        and $a + b + c \leq m$.
    \end{lemma}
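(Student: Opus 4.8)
The plan is to extract the condition directly from the explicit formula \eqref{eq:p-formula}. Recall that $p^a_{bc}(m)$ counts, for fixed $k$-subsets $u,v\subseteq[m]$ with $|u\setminus v|=a$ (which exist since $a\le k\le m-k$, using the standing assumption $k\le m/2$), the number of $k$-subsets $w$ with $|u\setminus w|=b$ and $|w\setminus v|=c$. Partition $[m]$ into the four blocks $A=u\cap v$, $B=u\setminus v$, $C=v\setminus u$, $D=[m]\setminus(u\cup v)$, of sizes $k-a$, $a$, $a$, $m-k-a$. Writing $i=|w\cap A|$, the conditions $|u\cap w|=k-b$ and $|w\cap v|=k-c$ force $|w\cap B|=k-b-i$ and $|w\cap C|=k-c-i$, and then $|w|=k$ forces $|w\cap D|=b+c+i-k$; choosing $w$ amounts to choosing these four pieces independently, which recovers \eqref{eq:p-formula} with $i$ as the summation variable.

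Since $k-a$, $a$, $a$, $m-k-a$ are all nonnegative integers, every summand of \eqref{eq:p-formula} is a nonnegative integer, so $p^a_{bc}(m)>0$ precisely when some summand is positive, i.e.\ when there is an integer $i$ making all four binomial coefficients nonzero. Unpacking the four nonvanishing conditions $0\le i\le k-a$, $0\le k-b-i\le a$, $0\le k-c-i\le a$, $0\le b+c+i-k\le m-k-a$ and solving each for $i$, this says exactly that $i$ lies in the interval with lower endpoint $\max(0,\;k-a-b,\;k-a-c,\;k-b-c)$ and upper endpoint $\min(k-a,\;k-b,\;k-c,\;m-a-b-c)$.

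Because all eight endpoints are integers, such an $i$ exists iff the largest lower endpoint is at most the smallest upper endpoint, which is the conjunction of the $16$ inequalities $L\le U$ with $L$ a lower endpoint and $U$ an upper endpoint. The last step is to run through these $16$. Twelve are automatic: they reduce to $a,b,c\ge 0$, to $a,b,c\le k$, and to $a,b,c\le m-k$ (this last using $k\le m/2$). The remaining four are precisely $a\le b+c$, $b\le a+c$, $c\le a+b$ (comparing $k-b-c$ with $k-a$, $k-a-c$ with $k-b$, and $k-a-b$ with $k-c$ respectively) and $a+b+c\le m$ (comparing $0$ with $m-a-b-c$). Hence $p^a_{bc}(m)>0$ iff \eqref{eq:delta} holds and $a+b+c\le m$, as claimed.

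I expect no serious obstacle; the only point demanding care is the bookkeeping of the sixteen comparisons and, in particular, verifying that the twelve ``free'' ones really do follow from $0\le a,b,c\le k$ and $k\le m/2$ alone, rather than from any of the four inequalities we are trying to characterize. (One could instead argue purely from the algebraic shape of \eqref{eq:p-formula} without the block decomposition, but identifying $i$ with $|w\cap(u\cap v)|$ is what makes the interval structure transparent.)
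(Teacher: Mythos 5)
Your proof is correct, and it takes a somewhat different route from the paper's. The paper splits the two directions: for necessity it argues combinatorially (if $p^a_{bc}(m)>0$ there exist $k$-sets $u,v,w$ realizing the three distances, and $u\setminus v\subset(u\setminus w)\cup(w\setminus v)$ forces $a\le b+c$, while disjointness of $u\setminus v$, $v\setminus w$, $w\setminus u$ forces $a+b+c\le m$); for sufficiency it assumes WLOG $a\ge b\ge c$ and exhibits the single witness $i=\max(0,k-b-c)$ in \eqref{eq:p-formula}. You instead handle both directions uniformly by observing that all summands are nonnegative (since the top entries $k-a$, $a$, $a$, $m-k-a$ are nonnegative integers, the last using $m\ge 2k$), so positivity of the sum is equivalent to the nonemptiness of an integer interval, which you reduce to sixteen endpoint comparisons. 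I checked the bookkeeping: the twelve ``free'' comparisons do reduce to $0\le a,b,c\le k$ and $a,b,c\le m-k$, and the other four are exactly \eqref{eq:delta} together with $a+b+c\le m$. What your version buys is that it needs nothing beyond the algebraic shape of \eqref{eq:p-formula} and avoids both the WLOG and the choice of a clever witness; what the paper's version buys is brevity and a transparent combinatorial reason for necessity. The block-decomposition rederivation of \eqref{eq:p-formula} in your first paragraph is not logically needed (the formula is already given), though it does make the role of $i$ clear.
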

    \begin{proof}
        If $p^a_{bc}(m) > 0$ then there are sets $u, v, w \subset [m]$ of size $k$ such that $|u \setminus v| = a$, $|u \setminus w| = b$, and $|w \setminus v| = c$. Since
        $u \setminus v \subset (u \setminus w) \cup (w \setminus v)$,
        we must have $a \leq b + c$, and likewise for other permutations, so \eqref{eq:delta} holds.
        Since $u \setminus v$, $v \setminus w$, and $w \setminus u$ are pairwise disjoint, we also must have $a + b + c \leq m$.
        
        Conversely, suppose \eqref{eq:delta} holds and $m \geq a + b + c$.
        Without loss of generality we may assume $a \geq b \geq c$.
        Taking the $i = \max(0, k - b - c)$ in \eqref{eq:p-formula} shows that $p^a_{bc}(m) > 0$.
    \end{proof}
    
    We now doubly overload the notation $p^a_{bc}$ as follows:
    \begin{enumerate}
        \item For $a, b, c \in \{0, \dots, k\}^d$, $d \geq 1$, define
        \[
            p^a_{bc} = p^{a_1}_{b_1c_1} \cdots p^{a_d}_{b_dc_d},
        \]
        and observe that $p^a_{bc}$ is again a polynomial in $m$.
        These are the structure constants of $\J(m, k)^d$.
        \item For $\beta, \gamma \subset \{0, \dots, k\}^d$, define
        \[
            p^a_{\beta\gamma} = \sum_{(b, c) \in \beta \times \gamma} p^a_{bc}.
        \]
    \end{enumerate}
    We say $a, b, c \in \{0, \dots, k\}^d$ form a \emph{triangle} if $p^a_{bc}$ is not the zero polynomial.
    By \Cref{lem:delta-condition}, this is the case if and only if $a_i, b_i, c_i$ satisfy \eqref{eq:delta} for each $i$.
    If $x \in \{0, \dots, k\}$ we write $(x)^d$ as shorthand for $(x, \dots, x)$.
    
    The following lemma is self-evident at this point.
    
    \begin{lemma}
%        \label{lem:cameron-sandwiches}
        Assume $m \geq 2k > 0$.
        Let $\X = \J(m, k)^\S$ for some partition $\S$ of $\{0, \dots, k\}^d$.
        Then $\X$ is a Cameron sandwich if and only if \eqref{eq:sandwich-condition} holds and $p^a_{\beta\gamma}(m) = p^{a'}_{\beta\gamma}(m)$ for all $\alpha,\beta,\gamma \in \S$ and $a, a' \in \alpha$.
    \end{lemma}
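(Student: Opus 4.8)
The plan is to unwind the definition of coherence for $\X = \J(m,k)^\S$ using the product structure of $\J(m,k)^d$ coordinatewise, and to reduce everything to the two facts already recorded before the lemma: that $\X$ is a configuration if and only if $\{(0)^d\}\in\S$, and that $\X\leq\Cam(m,k,d)$ if and only if \eqref{eq:sandwich-condition} holds.

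First I would record two trivial reductions. (i) By the very definition of $\J(m,k)^\S$ we have $\J(m,k)^d\leq\X$, and $\X$ is symmetric because $|u_i\setminus v_i|=|v_i\setminus u_i|$, whence $R_\alpha^*=R_\alpha$ for every $\alpha\in\S$; so $\X$ is a Cameron sandwich exactly when $\X$ is a configuration, $\X\leq\Cam(m,k,d)$, and $\X$ has well-defined structure constants. (ii) If \eqref{eq:sandwich-condition} holds then the cell of $\S$ containing $(0)^d$ lies inside the $\Sym(d)$-orbit of $(0)^d$, which is just $\{(0)^d\}$, so that cell equals $\{(0)^d\}$ and $\X$ is automatically a configuration; this lets me drop the configuration clause from the right-hand side of the stated equivalence.

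Next comes the one genuine computation. Fix $\alpha,\beta,\gamma\in\S$ and a pair $(u,v)\in R_\alpha$, and put $a=(|u_i\setminus v_i|)_{i=1}^d\in\alpha$. Counting $w\in V$ coordinatewise and invoking coherence of the Johnson scheme in each coordinate gives
\[
\#\{w\in V:(u,w)\in R_\beta,\ (w,v)\in R_\gamma\}
=\sum_{(b,c)\in\beta\times\gamma}\ \prod_{i=1}^d\#\{w_i:|u_i\setminus w_i|=b_i,\ |w_i\setminus v_i|=c_i\}
=\sum_{(b,c)\in\beta\times\gamma}\prod_{i=1}^d p^{a_i}_{b_ic_i}(m)
=p^a_{\beta\gamma}(m),
\]
the last two equalities being the overloaded notation $p^a_{bc}=\prod_i p^{a_i}_{b_ic_i}$ and $p^a_{\beta\gamma}=\sum_{(b,c)\in\beta\times\gamma}p^a_{bc}$. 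Because $m\geq 2k$, every color of $\J(m,k)$ is realized, so every tuple $a\in\{0,\dots,k\}^d$ actually occurs as $(|u_i\setminus v_i|)_i$ for some pair in $\J(m,k)^d$; hence the above count is constant on $R_\alpha$ for all $\beta,\gamma$ precisely when $p^a_{\beta\gamma}(m)=p^{a'}_{\beta\gamma}(m)$ for all $\alpha,\beta,\gamma\in\S$ and all $a,a'\in\alpha$, i.e. precisely when $\X$ admits structure constants.

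Finally I would assemble the equivalence. If $\X$ is a Cameron sandwich, then $\X\leq\Cam(m,k,d)$ gives \eqref{eq:sandwich-condition} and coherence of $\X$ gives the displayed structure-constant condition. Conversely, given both conditions, reduction (ii) makes $\X$ a configuration, \eqref{eq:sandwich-condition} gives $\X\leq\Cam(m,k,d)$ while $\J(m,k)^d\leq\X$ is automatic, and the computation promotes the structure-constant condition to genuine coherence; so $\X$ is a Cameron sandwich. There is no real obstacle here — the lemma is, as advertised, self-evident; the only points needing care are not to forget that $m\geq 2k$ is exactly what guarantees every tuple $a$ is realized (so the structure-constant condition is not a priori weaker than coherence), and that \eqref{eq:sandwich-condition} silently supplies the configuration axiom through reduction (ii).
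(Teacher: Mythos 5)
Your proof is correct and matches the paper's intent exactly: the paper declares this lemma ``self-evident at this point'' and gives no proof, relying on precisely the definition-chase you carry out (the configuration and sandwich criteria recorded just before the lemma, plus the coordinatewise product formula for the structure constants of $\J(m,k)^d$). Your two points of care --- that $m \geq 2k$ ensures every tuple $a$ is realized, and that \eqref{eq:sandwich-condition} supplies the configuration axiom --- are the right ones.
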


    Properties of $\X = \J(m, k)^\S$ are related to corresponding properties of $\S$, analogously to \Cref{lem:prop-correspondence}.
    Let $\rank(\S)$ denote the number of cells of $\S$.
    We call $\S$ \emph{homogeneous} if $(0, \dots, 0, 1)^{\Sym(d)} \in \S$.
    The \emph{(strong) automorphism group} of $\S$ is the group $\Aut(\S) \leq \Sym(d)$ of permutations of $[d]$ preserving each cell of $\S$.
    If $G \leq \Sym(d)$ we write $\S_k(G)$ for the partition $\{0, \dots, k\}^d / G$ of $\{0, \dots, k\}^d$ into $G$-orbits.
    We call $\S$ \emph{schurian} if $\S = \S_k(\Aut(\S))$.
    
    \begin{lemma}
        \label{lem:prop-correspondence-cameron-sandwich}
        Let $\X = \J(m, k)^\S$ be a Cameron sandwich, where $m \geq 2k > 0$.
        \begin{enumerate}
            \item $\rank(\X) = \rank(\S)$.
            \item $\X$ is primitive $\iff$ $\S$ is homogeneous and $m \geq 2k+1$.
            \item $\Aut(\X) = \Sym(m)^{(k)} \wr \Aut(\S)$.
            \item $\X$ is schurian $\iff$ $\S$ is schurian.
            \item $\Aut(\X)$ is primitive $\iff$ $\Aut(\S)$ is transitive and $m \geq 2k+1$.
        \end{enumerate}
    \end{lemma}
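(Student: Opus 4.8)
The plan is to follow the proof of \Cref{lem:prop-correspondence} essentially verbatim, substituting $\Sym(m)^{(k)}$, $\J(m,k)$, $\Cam(m,k,d)$, and $\{0,\dots,k\}^d$ for $\Sym(m)$, $\T_m$, $\Ham md$, and $2^\Omega$. The Johnson-specific inputs are: (a) since $m\ge 2k$, every $a\in\{0,\dots,k\}^d$ is realized by some pair of tuples of $k$-sets, so $\alpha\mapsto R_\alpha$ is a bijection $\S\to\X$ with all $R_\alpha$ nonempty --- this gives (1) at once, and also the injectivity of $\S\mapsto\J(m,k)^\S$ needed later; (b) the classical identifications $\Aut(\J(m,k))=\Sym(m)^{(k)}$ and $\Aut(\Cam(m,k,d))=\Sym(m)^{(k)}\wr\Sym(d)$, valid for $m\ge 2k+1$ (for $m=2k$ there are extra coordinatewise complementations, and parts (3)--(5) then require a small separate remark); (c) $\Sym(m)$ is transitive on ordered pairs of $k$-subsets of $[m]$ with a prescribed intersection size; and (d) the Johnson graph $\J(m,k)$ is connected, while the scheme $\J(m,k)$ is primitive exactly when $m\ge 2k+1$ (the distance-$k$ relation of $\J(2k,k)$ being a perfect matching).

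For (2), $\S$ is homogeneous iff the cell of $\S$ containing $(0,\dots,0,1)$ is all of $(0,\dots,0,1)^{\Sym(d)}=\{e_j:j\in[d]\}$; one direction is the definition, and the other follows from \eqref{eq:sandwich-condition} (which forces that cell into the orbit) together with the requirement that the corresponding color graph of $\X$ be connected (an absent $e_j$ freezes coordinate $j$), exactly as in \Cref{lem:prop-correspondence}(2). More generally, if some non-identity cell $\beta$ has $\bigcup_{b\in\beta}\opr{supp}(b)\ne[d]$ then $R_\beta$ freezes a coordinate and $\X$ is imprimitive; and if $m=2k$ the singleton cell $\{(k,\dots,k)\}$ (which is a cell by \eqref{eq:sandwich-condition}) yields a perfect matching. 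Conversely, assuming $\S$ homogeneous and $m\ge 2k+1$, one first shows every non-identity cell $\beta$ has full support by a biregularity (``design-like'') argument for $\S$ --- the analogue of \Cref{lem:basic-props}, with the componentwise order and support replacing containment and union, and with triangles described via \Cref{lem:delta-condition} --- and then, using connectedness of each $\J(m,k)$ and $m\ge 2k+1$, checks as in \Cref{lem:prop-correspondence}(2) that every $R_\beta$ is connected.

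For (3), the sandwich $\J(m,k)^d\le\X\le\Cam(m,k,d)$ gives
\[
    (\Sym(m)^{(k)})^d=\Aut(\J(m,k)^d)\le\Aut(\X)\le\Aut(\Cam(m,k,d))=(\Sym(m)^{(k)})^d\,\Sym(d)^\eps,
\]
where $\eps:\Sym(d)\to\Sym(\binom{[m]}{k}^d)$ embeds $\Sym(d)$ as the group of axis permutations. By Dedekind's modular law, $\Aut(\X)=(\Sym(m)^{(k)})^d\bigl(\Aut(\X)\cap\Sym(d)^\eps\bigr)$; and an axis permutation $\sigma$ sends $R_\alpha$ to $R_{\sigma\alpha}$, so it preserves $\X$ iff it fixes every cell of $\S$, whence $\Aut(\X)\cap\Sym(d)^\eps=\Aut(\S)^\eps$ and $\Aut(\X)=\Sym(m)^{(k)}\wr\Aut(\S)$.

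For (4) and (5), the crux is that the orbital scheme of $\Sym(m)^{(k)}\wr G$ (product action) is $\J(m,k)^{\S_k(G)}$ for every $G\le\Sym(d)$: writing an element as $(h_1,\dots,h_d;\pi)$ and invoking input (c), two pairs of tuples lie in the same orbit iff their coordinatewise vectors of $|u_i\setminus v_i|$ lie in the same $G$-orbit. Combined with (3), the orbital scheme of $\Aut(\X)$ is $\J(m,k)^{\S_k(\Aut(\S))}$, so by injectivity of $\S\mapsto\J(m,k)^\S$, $\X$ is schurian iff $\S_k(\Aut(\S))=\S$, i.e.\ iff $\S$ is schurian, which is (4). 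For (5), $\Aut(\X)$ is primitive iff its orbital scheme $\J(m,k)^{\S_k(\Aut(\S))}$ is primitive, which by (2) holds iff $\S_k(\Aut(\S))$ is homogeneous and $m\ge 2k+1$, and $\S_k(\Aut(\S))$ is homogeneous iff the $\Aut(\S)$-orbit of $(0,\dots,0,1)$ is all of $\{e_j:j\in[d]\}$, i.e.\ iff $\Aut(\S)$ is transitive on $[d]$; one may alternatively quote \cite{dixon--mortimer}*{Lemma~2.7A}. The main obstacle is the biregularity/design step inside (2): one must set up the $\S$-analogue of \Cref{lem:basic-props} carefully, and then organize the connectedness verification so that it genuinely uses $m\ge 2k+1$ and correctly handles the boundary case $m=2k$.
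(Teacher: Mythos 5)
Your proposal is correct and follows essentially the same route as the paper, which for parts (3)--(5) simply says ``exactly as in the proof of \Cref{lem:prop-correspondence}'' and for part (2) argues via $p^\alpha_{\beta\beta}(m)>0$ (with $\alpha=(0,\dots,0,1)^{\Sym(d)}$) together with connectedness of the Cameron graph. One caution about your organization of part (2): the biregularity analogue of \Cref{lem:basic-props} you want to invoke is \Cref{lem:basic-props-VAS}, which the paper proves only for vector association schemes, using the leading terms of the structure \emph{polynomials} $p^a_{bc}$; a Cameron sandwich at a single fixed $m$ only gives the numerical identities $p^a_{\beta\gamma}(m)=p^{a'}_{\beta\gamma}(m)$, so that proof does not transfer. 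You do not actually need biregularity: full support of $\beta$, and indeed connectivity of $R_\beta$ directly, follows from coherence at the given $m$, since $p^{e_i}_{\beta\beta}(m)>0$ for some $i$ by \Cref{lem:delta-condition} (using $m\ge 2k+1$), hence $p^{e_j}_{\beta\beta}(m)>0$ for every $j$ because $e_i$ and $e_j$ lie in the same cell $\alpha$ --- which is exactly the paper's argument. Finally, your remark that parts (3)--(5) need a separate word when $m=2k$ (coordinatewise complementation of $k$-sets enlarges $\Aut(\J(2k,k))$ for $k\ge 2$) is a fair point that the paper's one-line proof glosses over; in the paper's applications $m$ is always large, so nothing is lost.
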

    \begin{proof}
        (1) Immediate from the definition of $\J(m, k)^\S$.
        
        (2) Suppose $\X = \J(m, k)^\S$ is primitive. Let $a \in \alpha \in \S$ where $a = (0, \dots, 0, 1)$.
        Then $\alpha \subset (0, \dots, 0, 1)^{\Sym(d)}$.
        Since the color graph of $\J(m, k)^\S$ defined by $\alpha$ is connected, we must have $\alpha = (0, \dots, 0, 1)^{\Sym(d)}$.
        Hence $\S$ is homogeneous.
        If $m = 2k$ then the color graph of $\J(m, k)^\S$ defined by $\omega = \{(k)^d\}$ is disconnected (all components have just two vertices)
        so we must have $m \geq 2k + 1$.
        
        Conversely, suppose $\S$ is homogeneous and $m \geq 2k+1$.
        Let $\alpha = (0, \dots, 0, 1)^{\Sym(d)}$
        and let $\beta \in \S$, $\beta \neq \{(0)^d\}$.
        Then for any $b \in \beta$ there is some $a \in \alpha$ such that $(a, b, b)$ is a triangle.
        Since $m \geq 2k+1$, \Cref{lem:delta-condition} implies that $p^\alpha_{\beta\beta}(m) > 0$.
        Now the color graph of $\J(m, k)^\S$ defined by $\alpha$ is the Cameron graph, which is connected,
        so the color graph of $\J(m, k)^\S$ defined by $\beta$ is also connected.
        
        (3--5) Exactly as in the proof of \Cref{lem:prop-correspondence}.
    \end{proof}
    
    We are most interested when $\J(m, k)^\S$ is a Cameron sandwich for all $m$. Thus we make the following definition.
    
    \begin{definition}
        \label{defn:VAS}
        Let $\S$ be a partition of $\{0, \dots, k\}^d$.
        We call $\S$ a \emph{vector association scheme} if
        $\J(m, k)^\S$ is a Cameron sandwich for all $m\geq 2k$.
        Equivalently,
        \begin{enumerate}
            \item $a^{\Sym(d)} = (a')^{\Sym(d)}$ for all $a, a' \in \alpha \in \S$,
            \item $p^a_{\beta\gamma} = p^{a'}_{\beta\gamma}$ for all $\alpha, \beta, \gamma \in \S$ and $a, a' \in \alpha$.
        \end{enumerate}
    \end{definition}
        
    \begin{remark}
        \label{rem:vas-generalizes-sas}
        Vector association schemes with $k=1$ are equivalent to set association schemes.
        This follows from \Cref{lem:hamming-sandwiches}(2).
    \end{remark}
    
    We can and will denote the common value of $p^a_{\beta\gamma}$ for $a \in \alpha$ by $p^\alpha_{\beta\gamma}$. The polynomials $p^\alpha_{\beta\gamma}$ are the \emph{structure polynomials} of $\S$.
    
    As for coherent configurations and set association schemes, the prototypical vector assocation schemes are the orbital schemes $\S_k(G)$ for $G \leq \Sym(d)$.
    Among these we have the \emph{discrete scheme} $\S_k(1)$ and the \emph{trivial scheme} $\S_k(\Sym(d))$.
    
    \begin{lemma}
        \label{lem:large-cameron-sandwich-implies-VAS}
        For all $k, d \geq 1$ there is some $m_0 = m_0(k, d)$ such that if $\X = \J(m, k)^\S$ is a Cameron sandwich for some $m \geq m_0$ then $\S$ is a vector association scheme.
    \end{lemma}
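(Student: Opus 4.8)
The plan is to follow the proof of \Cref{lem:hamming-sandwiches}(2) almost verbatim. Recall that $\X = \J(m_1, k)^\S$ being a Cameron sandwich is equivalent to \eqref{eq:sandwich-condition} together with the numerical identities $p^a_{\beta\gamma}(m_1) = p^{a'}_{\beta\gamma}(m_1)$ for all $\alpha, \beta, \gamma \in \S$ and $a, a' \in \alpha$, while $\S$ being a vector association scheme means, by \Cref{defn:VAS}, that \eqref{eq:sandwich-condition} holds and that these identities hold as identities of polynomials in $m$. So it suffices to show that if the polynomials $p^a_{\beta\gamma}$ and $p^{a'}_{\beta\gamma}$ agree at a single sufficiently large integer $m_1$, then they agree identically, with a threshold depending only on $k$ and $d$.

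First I would bound the arithmetic complexity of the structure polynomials. From \eqref{eq:p-formula}, for $a, b, c \in \{0, \dots, k\}$ the single-coordinate structure constant $p^a_{bc}(m)$ is a polynomial in $m$ of degree at most $k$: the exponent $b + c + j - k$ of the factor $\binom{m - k - a}{b + c + j - k}$ is at most $\min(b, c)$ once the constraints $j \leq k - b$ and $j \leq k - c$ on the summation index are used. Moreover $k!\, p^a_{bc}(m)$ has integer coefficients, each of absolute value at most some constant $C_1 = C_1(k)$ depending only on $k$ (the coefficients of the monic numerator of $\binom{m - k - a}{\ell}$, $\ell \leq k$, are, up to sign, elementary symmetric functions of a set of at most $k$ nonnegative integers, each at most $3k$). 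Multiplying over the $d$ coordinates and summing over the at most $(k+1)^{2d}$ pairs $(b, c) \in \beta \times \gamma$ then shows that, for every $\alpha, \beta, \gamma \in \S$ and $a, a' \in \alpha$, the polynomial
\[
    h(m) = (k!)^d \br{\, p^a_{\beta\gamma}(m) - p^{a'}_{\beta\gamma}(m) \,}
\]
lies in $\Z[m]$, has degree at most $kd$, and has all of its coefficients of absolute value at most some constant $B = B(k, d)$.

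Now I would invoke the elementary fact that a nonzero polynomial $h \in \Z[m]$ all of whose coefficients have absolute value at most $B$ cannot vanish at any integer $m_1 > B$: writing $h(m) = h_0 + m\, g(m)$ with $g \in \Z[m]$, the equation $h(m_1) = 0$ forces $m_1 \mid h_0$, hence $h_0 = 0$ since $|h_0| \leq B < m_1$, and then $g(m_1) = 0$, so by induction on the degree every coefficient of $h$ vanishes. Set $m_0 = m_0(k, d) = \max\br{2k,\ B(k, d) + 1}$ and suppose $\X = \J(m_1, k)^\S$ is a Cameron sandwich for some $m_1 \geq m_0$. Then \eqref{eq:sandwich-condition} holds (it is exactly the condition $\X \leq \Cam(m_1, k, d)$, which involves no $m$), and coherence of $\X$ gives $p^a_{\beta\gamma}(m_1) = p^{a'}_{\beta\gamma}(m_1)$ for all $\alpha, \beta, \gamma \in \S$ and $a, a' \in \alpha$; thus the polynomial $h$ above has the integer root $m_1 > B$, so $h \equiv 0$, i.e.\ $p^a_{\beta\gamma} = p^{a'}_{\beta\gamma}$ identically. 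Together with \eqref{eq:sandwich-condition} this is precisely \Cref{defn:VAS}, so $\S$ is a vector association scheme. The only genuine work is the height bound $B(k, d)$ in the middle step — routine bookkeeping with the binomial coefficients of \eqref{eq:p-formula} — and one could make $m_0(k, d)$ explicit in the spirit of the value $m_0(d) = 3^d + 4$ of \Cref{lem:hamming-sandwiches}(2) (for instance by expanding $p^a_{\beta\gamma}$ in a basis whose coordinates count triangles of a fixed shape and are therefore at most $(k+1)^{O(d)}$), but since the statement asserts only existence I would not pursue this.
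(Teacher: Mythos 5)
Your proof is correct and follows essentially the same approach as the paper: both reduce the claim to the observation that $p^a_{\beta\gamma} - p^{a'}_{\beta\gamma}$ is a polynomial in $m$ which, if nonzero, cannot vanish at arbitrarily large integers. The paper's proof is three lines because it simply takes $m_0$ to be the maximum of $m_1(\S)$ over the finitely many partitions $\S$ of $\{0,\dots,k\}^d$, whereas your explicit height bound $B(k,d)$ and integer-root argument do extra bookkeeping whose only payoff is an effective value of $m_0$, which the statement does not require.
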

    \begin{proof}
        If $\S$ is not a vector assocation scheme then there are some $\alpha, \beta, \gamma \in \S$ and $a, a' \in \alpha$ such that $p^a_{\beta\gamma} - p^{a'}_{\beta\gamma}$ is not the zero polynomial.
        Therefore there is some $m_1 = m_1(\S)$ such that $p^a_{\beta\gamma}(m) - p^{a'}_{\beta\gamma}(m) \neq 0$ for all $m \geq m_1$.
        Let $m_0$ be the maximum of $m_1(\S)$ over all choices of $\S$.
    \end{proof}
    
    \begin{lemma}
        \label{lem:basic-props-VAS}
        Let $\S$ be a vector association scheme on $\{0, \dots, k\}^d$.
        \begin{enumerate}
            \item $\alpha \in \S \implies\alpha^c \in \S$, where $\alpha^c = \{(k)^d - a : a \in \alpha\}$,
            \item For any $\alpha, \beta \in \S$, the domination graph $\{(a, b) \in \alpha \times \beta : a \leq b\}$ is biregular, where $a \leq b$ means $a_i \leq b_i$ for all $i$.
            \item If $\S$ is homogeneous and $\beta \in \S$ then $\sum \beta = (x)^d$ for some $x \geq 0$.
        \end{enumerate}
    \end{lemma}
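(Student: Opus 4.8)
The plan is to adapt the proof of \Cref{lem:basic-props}, with degrees of polynomials in $m$ doing the bookkeeping done there by triangle types; parts (1) and (3) come out cleanly, and (2) is the substantive one.

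\emph{Part (1).} By condition (1) of \Cref{defn:VAS} the singleton $\omega = \{(k)^d\}$ is a cell. Setting the second lower index in \eqref{eq:p-formula} equal to $k$ kills every term but the one with $i=0$, leaving $p^a_{bk}(m) = \binom{a}{k-b}\binom{m-k-a}{b}$; hence for $a\in\alpha$ and any cell $\beta$,
\[
  p^a_{\beta\omega}(m) \;=\; \sum_{\substack{b\in\beta\\ b\,\geq\, a^c}} \;\prod_{i=1}^d \binom{a_i}{k-b_i}\binom{m-k-a_i}{b_i},
\]
where $a^c=(k)^d-a$ and the omitted summands (those $b\not\geq a^c$) vanish identically. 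Each retained summand is a nonzero polynomial of degree $\sum_i b_i$ (a constant on the cell $\beta$) with positive leading coefficient, so $p^a_{\beta\omega}$ is a nonzero polynomial exactly when some $b\in\beta$ has $b\geq a^c$. Taking $\beta$ to be the cell containing $a^c$, every $b\in\beta$ lies in the $\Sym(d)$-orbit of $a^c$ (condition (1)), so $b\geq a^c$ forces $b=a^c$; thus $p^{a'}_{\beta\omega}\neq 0 \iff (a')^c\in\beta$ for each $a'\in\alpha$. As $p^a_{\beta\omega}\neq 0$, coherence gives $(a')^c\in\beta$ for all $a'\in\alpha$, i.e.\ $\alpha^c\subseteq\beta$; applying the same to $a^c\in\beta$ gives $\beta\subseteq\alpha^c$, so $\beta=\alpha^c\in\S$.

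\emph{Part (2).} From \eqref{eq:p-formula} one reads off that $p^a_{bc}(m)$ vanishes identically unless $(a,b,c)$ satisfies \eqref{eq:delta}, in which case it has degree $b+c-\max(a,b,c)$ in $m$ with positive leading coefficient. Consequently, for $b,b'$ in a common cell $\beta$ (so $\sum_i b_i=\sum_i b'_i=:M$) and $a\in\alpha$, the product $\prod_i p^{a_i}_{b_ib'_i}(m)$ is nonzero only if each coordinate triple satisfies \eqref{eq:delta}, and then has degree $2M-\sum_i\max(a_i,b_i,b'_i)\leq M$, with equality if and only if $b=b'$ and $a\leq b$ (coordinatewise $\max(a_i,b_i,b'_i)\geq\max(b_i,b'_i)\geq\tfrac12(b_i+b'_i)$). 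So the leading coefficient of $p^a_{\beta\beta}(m)=\sum_{b,b'\in\beta}\prod_i p^{a_i}_{b_ib'_i}(m)$ is $\sum_{b\in\beta,\, b\geq a} w(a,b)$, with $w(a,b)=\prod_i\binom{k-a_i}{b_i-a_i}/b_i!>0$; in particular $\deg p^a_{\beta\beta}=M$ if some $b\in\beta$ lies above $a$, and is strictly smaller otherwise. By coherence this degree is independent of $a\in\alpha$, so either the domination graph $\{(a,b)\in\alpha\times\beta : a\leq b\}$ has no edges (hence is biregular), or every $a\in\alpha$ lies below some $b\in\beta$, and then coherence gives moreover $\sum_{b\in\beta,\, b\geq a}w(a,b)=\sum_{b\in\beta,\, b\geq a'}w(a',b)$ for all $a,a'\in\alpha$. \textbf{The main obstacle} is to upgrade this weighted identity to the plain count $\#\{b\in\beta : a\leq b\}=\#\{b\in\beta : a'\leq b\}$: when $k=1$ every $w(a,b)$ equals $1$ and this is automatic, reproducing the argument of \Cref{lem:basic-props}, but for $k\geq 2$ the weights genuinely vary. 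I expect to resolve it by using the full polynomial identities $p^a_{\beta\gamma}(m)=p^{a'}_{\beta\gamma}(m)$ across all cells $\gamma$, not merely their leading terms: after grouping the summands of $p^a_{\beta\gamma}$ by the $\Sym(d)$-type of $(a,b,c)$ and disentangling the resulting linear system via the degree stratification above — in the spirit of the separation of \eqref{eq:coherence-sum} in the proof of \Cref{lem:hamming-sandwiches}(2) — the individual type-counts, and hence $\#\{b\in\beta : a\leq b\}$, should be forced to be constant on $\alpha$. The remaining half of biregularity, constancy of $\#\{a\in\alpha : a\leq b\}$ over $b\in\beta$, then follows by applying the statement to the cells $\beta^c,\alpha^c$ from part (1), since $a\leq b\iff b^c\leq a^c$.

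\emph{Part (3).} Homogeneity of $\S$ means $\alpha_1:=(0,\dots,0,1)^{\Sym(d)}=\{e_i:i\in[d]\}$ is a cell, where $e_i$ is the $i$th unit vector. For $1\leq j\leq k$, the orbit $\{j e_i:i\in[d]\}$ is a union of cells by condition (1); applying part (2) to $\alpha_1$ and to any cell $\gamma$ inside this orbit shows that $\#\{c\in\gamma : e_i\leq c\}$ — which is $1$ if $j e_i\in\gamma$ and $0$ otherwise — is independent of $i$, forcing $\gamma=\{j e_i:i\in[d]\}$. Hence $\alpha_j:=\{j e_i:i\in[d]\}$ is a single cell for every $j$. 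Applying part (2) to $\alpha_j$ and $\beta$ now shows $\#\{b\in\beta : b_i\geq j\}=\#\{b\in\beta : j e_i\leq b\}$ is independent of $i$ for each $j$; summing over $j=1,\dots,k$ gives $\sum_{b\in\beta}b_i=\sum_{j=1}^k\#\{b\in\beta : b_i\geq j\}$ independent of $i$, that is, $\sum\beta=(x)^d$ with $x=\sum_{b\in\beta}b_1$.
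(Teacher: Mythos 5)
Your parts (1) and (3) are correct and essentially the paper's own arguments: for (1) the paper phrases the forcing of $b'=(a')^c$ via the triangle condition together with the fact that condition (1) of \Cref{defn:VAS} fixes coordinate sums, rather than via the explicit formula, but the content is identical; part (3) is derived from (2) exactly as you do, by first showing each $\{je_i : i \in [d]\}$ is a single cell. The gap is in part (2), precisely where you flag it. Your degree analysis of $p^a_{\beta\beta}$ is correct and yields that the weighted sum $\sum_{b\in\beta,\,b\geq a} w(a,b)$ with $w(a,b)=\prod_i\binom{k-a_i}{b_i-a_i}/b_i!$ is constant on $\alpha$; but for $k\geq 2$ these weights depend on how the coordinates of $a$ and $b$ are paired, not merely on the cells, and your proposed repair (grouping summands by $\Sym(d)$-type and disentangling via the degree stratification) is not carried out and is not obviously workable from $p^a_{\beta\beta}$ alone, since distinct types can contribute the same degree in $m$ and so are not separated by leading terms.

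The missing idea is a different choice of third cell. Fix $(a_0,b_0)\in\alpha\times\beta$ with $a_0\leq b_0$ and let $\gamma$ be the cell containing $c_0=b_0-a_0$. Because cell membership fixes coordinate sums, any triangle $(a,b,c)\in\alpha\times\beta\times\gamma$ satisfies $b_i\leq a_i+c_i$ for all $i$ together with $\sum_i b_i=\sum_i(a_i+c_i)$, hence $b=a+c$ exactly. For such triples the leading term of $p^a_{bc}$ is $\prod_{i}\frac{(k-a_i)!}{(k-b_i)!\,c_i!^2}m^{c_i}$, and this monomial depends only on the three cells, since each factor involves only the multiset of coordinates of one of $a$, $b$, $c$. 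Hence the leading term of $p^a_{\beta\gamma}$ equals this fixed monomial times $N^a_{\beta\gamma}=\#\{b\in\beta : b-a\in\gamma\}$, and coherence forces $N^a_{\beta\gamma}$ to be independent of $a\in\alpha$. Summing $N^a_{\beta\gamma}$ over all cells $\gamma$ then gives the unweighted count $\#\{b\in\beta : a\leq b\}$ directly, with no weights to remove; the other half of biregularity follows from part (1) as you say. This difference-cell device is the step your proposal lacks.
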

    \begin{proof}
        Let $\omega = \{(k)^d\}$. By condition (1) of \Cref{defn:VAS}, $\omega \in \S$.
    
        (1) Let $a \in \alpha \in \S$ and let $\beta$ be the cell of $\S$ containing $b = (k)^d - a$.
        Since $a, b, (k)^d$ form a triangle, $p^a_{\beta\omega} \neq 0$.
        Hence for any $a' \in \alpha$, $p^{a'}_{\beta\omega} \neq 0$, so there is some $b' \in \beta$ such that $a', b', (k)^d$ forms a triangle.
        This is only possible if $b' = (a')^c$. Hence $\alpha^c \subset \beta$, and likewsie $\beta^c \subset \alpha$, so $\beta = \alpha^c$.
        
        (2) Let $(a_0, b_0) \in \alpha \times \beta$ be such that $a_0 \leq b_0$.
        Let $c_0 = b_0 - a_0$ and let $\gamma$ be the cell of $\S$ containing $c_0$.
        Since $a_0 + c_0 = b_0$, the only $(a, b, c) \in a_0^{\Sym(d)} \times b_0^{\Sym(d)} \times c_0^{\Sym(d)}$ such that $(a, b, c)$ forms a triangle are those
        for which $a + c = b$.
        In this case, from \eqref{eq:p-formula}, the leading term of the polynomial $p^a_{bc}$ is
        \[
            \lambda(p^a_{bc}) = \prod_{i=1}^d \frac{(k-a_i)!}{(k-b_i)! c_i!^2} m^{c_i}.
        \]
        By \Cref{defn:VAS}(1), this monomial depends only on $\alpha, \beta, \gamma$.
        Hence if $(a, b, c) \in \alpha \times \beta \times \gamma$ is a triangle then $\lambda(p^a_{bc}) = \lambda(p^{a_0}_{b_0c_0})$.
        Therefore, for $a \in \alpha$,
        \[
            \lambda(p^a_{\beta\gamma}) = N^a_{\beta\gamma} \lambda(p^{a_0}_{b_0c_0}),
        \]
        where $N^a_{\beta\gamma}$ is the number of $(b, c) \in \beta \times \gamma$ such that $a + c = b$.
        Hence $N^a_{\beta\gamma} = N^{a'}_{\beta\gamma}$ for all $a, a' \in \alpha$.
        Since $N^a_{\beta\gamma} = \#\{b \in \beta : b - a \in \gamma\}$,
        summing over all possibilities for $\gamma \in \S$ (containing some $b_0 - a_0$ for $(a_0, b_0) \in \alpha \times \beta$) gives $\#\{b \in \beta : a \leq b\}$.
        Hence this number is independent of $a \in \alpha$,
        and by applying (1) we also obtain that $\#\{a \in \alpha : a \leq b\}$ is independent of $b \in \beta$,
        so we have biregularity.
        
        (3)
        Let $\alpha_t = (0, \dots, 0, t)^{\Sym(d)}$ for $1 \leq t \leq k$.
        Since $\S$ is homogeneous, $\alpha_1 \in \S$,
        so $\alpha_t \in \S$ for each $t$ by (2).
        Now
        \[
            \br{\sum \beta}_i = \sum_{b \in \beta} b_i = \sum_{t=1}^k \#\{b \in \beta : t \leq b_i\}.
        \]
        Hence (3) follows from
        another application of (2).
    \end{proof}
    
    \begin{lemma}
        \label{lem:d=2}
        Let $\S$ be a homogeneous vector association scheme on $\{0, \dots, k\}^d$ with $d \leq 2$.
        Then $\S$ is trivial.
    \end{lemma}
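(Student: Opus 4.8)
The plan is to observe that for $d \le 2$ there is essentially nothing left to arrange: condition (1) of \Cref{defn:VAS} already restricts the possible cells to a very short list, and \Cref{lem:basic-props-VAS}(3) kills the one remaining bad case. First I would dispose of $d = 1$: the $\Sym(1)$-orbits on $\{0,\dots,k\}$ are singletons, so condition (1) of \Cref{defn:VAS} forces every cell of $\S$ to be a singleton, whence $\S = \S_k(1) = \S_k(\Sym(1))$ is trivial.

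For $d = 2$ the $\Sym(2)$-orbits on $\{0,\dots,k\}^2$ are the diagonal singletons $\{(a,a)\}$ and the two-element orbits $\{(a,b),(b,a)\}$ with $a \ne b$. By condition (1) of \Cref{defn:VAS} each cell of $\S$ is a nonempty subset of one of these, so is a diagonal singleton, a full off-diagonal orbit, or an off-diagonal singleton $\{(a,b)\}$ with $a \ne b$. I would then rule out the last case: if $\beta = \{(a,b)\}$ were a cell then $\sum \beta = (a,b)$, but \Cref{lem:basic-props-VAS}(3) (which is where homogeneity of $\S$ enters) forces $\sum \beta = (x,x)$ for some $x$, so $a = b$, a contradiction. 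Hence every cell of $\S$ is a diagonal singleton or a full off-diagonal orbit; since these blocks already partition $\{0,\dots,k\}^2$ and $\S$ is a partition, $\S$ must coincide with $\S_k(\Sym(2))$, i.e., $\S$ is trivial.

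I do not expect any genuine obstacle here: the substantive work has all been done in \Cref{lem:basic-props-VAS}(3). The only point to handle with a little care is that condition (1) of \Cref{defn:VAS} permits a cell to be a \emph{proper} nonempty subset of a $\Sym(d)$-orbit, so the off-diagonal singletons are not excluded automatically and must be eliminated explicitly; everything else is just bookkeeping of the orbits of $\Sym(2)$ on $\{0,\dots,k\}^2$.
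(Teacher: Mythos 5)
Your proposal is correct and follows essentially the same route as the paper: dispose of $d=1$ via condition (1) of \Cref{defn:VAS}, then for $d=2$ use \Cref{lem:basic-props-VAS}(3) to rule out an off-diagonal singleton cell $\{(a,b)\}$ with $a \neq b$, forcing every cell to be a full $\Sym(2)$-orbit. The paper's proof is just a more compressed statement of the same argument.
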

    \begin{proof}
        If $d = 1$ the lemma is trivial by condition (1) of \Cref{defn:VAS},
        so assume $d = 2$.
        Let $(x, y) \in \alpha \in \S$.
        Then $\alpha \subset \{(x, y), (y, x)\}$.
        If $x \neq y$, \Cref{lem:basic-props-VAS}(3) implies $\alpha = \{(x, y), (y, x)\} = (x, y)^{\Sym(2)}$.
        Hence $\S$ is trivial.
    \end{proof}
    
    \begin{proposition}
        \label{prop:kd<8}
        Let $\S$ be a homogeneous vector association scheme on $\{0, \dots, k\}^d$ with $kd < 8$.
        Then $\S$ is schurian.
    \end{proposition}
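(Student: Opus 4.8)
The plan is to reduce the proposition to the single case $(k,d) = (2,3)$ and then dispose of that case by hand using the structural constraints already available. If $d \leq 2$ then \Cref{lem:d=2} shows $\S$ is trivial, i.e.\ $\S = \S_k(\Sym(d))$, which is schurian. If $k = 1$ then $d \leq 7$, and by \Cref{rem:vas-generalizes-sas} $\S$ is (equivalent to) a set association scheme on a $d$-element set, hence schurian by \Cref{prop:nonschurian-enumeration}(1). Any remaining $\S$ has $k \geq 2$ and $d \geq 3$, and then $kd < 8$ forces $(k,d) = (2,3)$; so it remains to treat a homogeneous vector association scheme $\S$ on $\{0,1,2\}^3$.

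For that case, the idea is to cut down the possibilities orbit by orbit. By \Cref{defn:VAS}(1) each cell of $\S$ is contained in a single $\Sym(3)$-orbit of $\{0,1,2\}^3$, and by \Cref{lem:basic-props-VAS}(3) each cell $\beta$ has constant coordinate sum, $\sum\beta = (x,x,x)$. The $\Sym(3)$-orbits are the three singletons $\{(t,t,t)\}$, the six orbits consisting of the three arrangements of a multiset $\{t,t,t'\}$ with $t\neq t'$, and the single orbit $\omega = (0,1,2)^{\Sym(3)}$ of size $6$. A size-$3$ orbit cannot split, since none of its proper nonempty subsets has constant coordinate-sum vector. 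Hence $\S$ agrees with $\S_2(\Sym(3))$ except possibly on $\omega$, and the problem reduces to deciding which constant-sum partitions of $\omega$ can occur.

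Next I would enumerate those. Since every element of $\omega$ has coordinate sum $3$, a cell of size $s$ inside $\omega$ has sum $(s,s,s)$; thus sizes $1$ and $5$ are impossible, a size-$2$ cell is a ``reversal pair'' $\{a,(2,2,2)-a\}$, a size-$3$ cell is one of the two $\Alt(3)$-orbits, and a size-$4$ cell is the complement in $\omega$ of a reversal pair. So the restriction of $\S$ to $\omega$ is either $\{\omega\}$ (giving $\S = \S_2(\Sym(3))$), the split of $\omega$ into the two $\Alt(3)$-orbits (giving $\S = \S_2(\Alt(3))$, since $\Alt(3)$ and $\Sym(3)$ have the same orbits off $\omega$), or a partition having a reversal pair $\beta$ as a cell. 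The last is impossible by \Cref{lem:basic-props-VAS}(2): taking for $\alpha$ the cell $(0,0,1)^{\Sym(3)} \in \S$ (a cell by homogeneity), one computes that the domination graph $\{(a,b) \in \alpha\times\beta : a \leq b\}$ has $\alpha$-side degrees $1,2,1$, so it is not biregular. Hence $\S \in \{\S_2(\Sym(3)), \S_2(\Alt(3))\}$, both of which are schurian.

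I expect no serious obstacle here: given \Cref{lem:d=2} and \Cref{lem:basic-props-VAS}, the argument is elementary, and the only point requiring care is the short but exhaustive enumeration of constant-sum partitions of the six-element orbit $\omega$ together with the degree computation for the domination graph. (Alternatively one could settle $(k,d)=(2,3)$ by running the vector analogue of the Weisfeiler--Leman enumeration used for \Cref{prop:nonschurian-enumeration} on $\{0,1,2\}^3$; the argument above is shorter and explains concretely why no nonschurian example can occur in this range.)
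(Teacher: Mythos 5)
Your proof is correct and follows essentially the same route as the paper: reduce to $(k,d)=(2,3)$ via \Cref{lem:d=2} and the $k=1$ case, use the constant-coordinate-sum condition of \Cref{lem:basic-props-VAS}(3) to force every orbit except $(0,1,2)^{\Sym(3)}$ to be a full cell, and then kill the reversal-pair splitting of that orbit with the biregularity condition of \Cref{lem:basic-props-VAS}(2). The only (immaterial) difference is that you test biregularity against $\alpha=(0,0,1)^{\Sym(3)}$ where the paper uses $(0,0,2)^{\Sym(3)}$, and you spell out the size-$1,4,5$ exclusions that the paper leaves implicit.
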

    \begin{proof}
        If $k = 1$ then this follows from \Cref{rem:vas-generalizes-sas}
        and \Cref{prop:nonschurian-enumeration}.
        If $d \leq 2$ then this follows from the previous lemma.
        This leaves the case $(k, d) = (2, 3)$, which is shown in \Cref{fig:kd23}.
                
        \begin{figure}[hhh]
            \begin{tikzpicture}[scale=2]
                % negative z coordinates
                \foreach \x in {0, 1, 2}{
                    \node at (\x, -0.2, 0) {\x};
                    \node at (-0.2, \x, 0) {\x};
                    \node at (-0.2, 0, -\x) {\x};
                    \foreach \y in {0, 1, 2}{
                        \draw (\x, \y, 0) -- (\x, \y, -2);
                        \draw (\x, 0, -\y) -- (\x, 2, -\y);
                        \draw (0, \x, -\y) -- (2, \x, -\y);
                    }
                }
                \foreach \color/\points in {
                    white/{(0,0,0)},
                    gray/{(1,1,-1)},
                    black/{(2,2,-2)},
                    red/{(1,0,0), (0,1,0), (0,0,-1)},
                    green/{(2,0,0), (0,2,0), (0,0,-2)},
                    cyan/{(1,2,-2), (2,1,-2), (2,2,-1)},
                    magenta/{(0,2,-2), (2,0,-2), (2,2,0)},
                    blue/{(1,1,0), (1,0,-1), (0,1,-1)},
                    yellow/{(1,1,-2), (1,2,-1), (2,1,-1)},
                }
                    \foreach \point in \points
                        \draw[fill=\color] \point circle (2pt);
            \end{tikzpicture}
            \caption{Part of a vector association scheme on $\{0, 1, 2\}^3$}
            \label{fig:kd23}
        \end{figure}
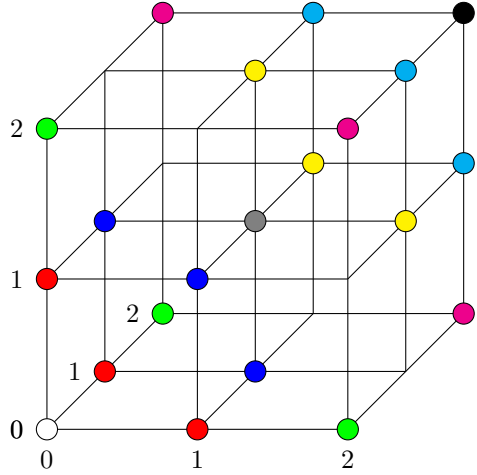
        
        Let $S = \Sym(3)$.
        For each orbit of $S$ on $\{0, 1, 2\}^3$, check whether there is a proper nonempty subset $\beta$ such that $\sum \beta = (x,x,x)$ for some $x$.
        If not, the full orbit must be a cell of $\S$ by \Cref{lem:basic-props-VAS}(3).
        That leaves only the orbit $(0, 1, 2)^S$ to consider.
        Let $\beta \subset (0,1,2)^S$ be a cell of $\S$,
        and without loss of generality suppose $(0,1,2) \in \beta$.
        If $|\beta| = 6$ then $\S$ is trivial.
        If $|\beta| = 2$ then $\beta = \{(0,1,2),(2,1,0)\}$,
        but then \Cref{lem:basic-props-VAS}(2) with $\alpha = (0,0,2)^S$ gives a contradiction.
        The last possibility is that the restriction $\S | (0,1,2)^S$ is the partition
        \[
            \{
                \{(0,1,2), (2,0,1), (1,2,0)\},
                \{(1,0,2), (2,1,0), (0,2,1)\}
            \},
        \]
        and in this case $\S=\S_2(\Alt(3))$.
    \end{proof}
    
    Now we can prove \Cref{prop:nonschurian1/8}, restated here.
    
    \begin{proposition}
        Assume \Cref{conj:sand}.
        Let $\X$ be a nonschurian PCC.
        Then
        \[
            |\Aut(\X)| \leq \exp O(n^{1/8} \log n).
        \]
    \end{proposition}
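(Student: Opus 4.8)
The plan is as follows. By \Cref{conj:sand}, either $|\Aut(\X)| \le \exp O((\log n)^{O(1)}) \le \exp O(n^{1/8}\log n)$ and there is nothing to prove, or $\X$ is a Cameron sandwich; so assume the latter, say
\[
    \J(m, k)^d \le \X = \J(m,k)^\S \le \Cam(m, k, d), \qquad 1 \le k \le m/2.
\]
Since $\X$ is a PCC, \Cref{lem:prop-correspondence-cameron-sandwich}(2) gives that $\S$ is homogeneous and $m \ge 2k+1 \ge 3$; since $\X$ is nonschurian, \Cref{lem:prop-correspondence-cameron-sandwich}(4) gives that $\S$ is nonschurian; and \Cref{lem:prop-correspondence-cameron-sandwich}(3) gives $|\Aut(\X)| = (m!)^d|\Aut(\S)| \le (m!)^d\,d!$. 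Note also $\binom mk \ge m$, so $n = \binom mk^d \ge m^d$.

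The first substantive step is a direct estimate, valid in the ``generic'' parameter range. Suppose $n \ge c\,m^8$ for some absolute constant $c > 0$. Then $m \le c^{-1/8}n^{1/8}$, and either $d \ge 8$ --- in which case also $m \le n^{1/d} \le n^{1/8}$ and $d \le \log_2 n$ --- or $d \le 7$ is bounded. In both cases, since $d\log m = \log(m^d) \le \log n$,
\[
    \log|\Aut(\X)| \le d\log(m!) + \log(d!) \le m\log(m^d) + d\log d \le m\log n + O((\log n)\log\log n) = O(n^{1/8}\log n),
\]
as required. That $n \ge c\,m^8$ holds with an absolute $c$ is a routine binomial estimate (using $m \ge 2k$ to control the constants) in each of the cases: $d \ge 8$ (with $c = 1$, since $n \ge m^d$); $k = 2$ and $d \ge 4$ (via $n \ge \binom m2^4 \ge (m^2/4)^4$); $k = 3$ and $d \ge 3$ (via $n \ge \binom m3^3$); and $k \ge 4$ and $d \ge 2$ (via $n \ge \binom m4^2$).

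This leaves the finitely many ``small'' shapes: $d = 1$; $k = 1$ with $2 \le d \le 7$; $k = 2$ with $d \in \{2,3\}$; and $k = 3$ with $d = 2$. For each I would show that either $m$ --- and hence $n$ --- is bounded by an absolute constant, in which case $|\Aut(\X)| \le n!$ is bounded and the conclusion is trivial (a nonschurian PCC has $n \ge 2$, so $n^{1/8}\log n$ exceeds a fixed positive constant), or else a structural result forces $\X$ to be schurian, contrary to hypothesis. If $d = 1$ then $\Cam(m,k,1) = \J(m,k) = \J(m,k)^1$, so $\X = \J(m,k)$ is schurian outright. If $k = 1$ then $\X = [m]^\S$ is a Hamming sandwich, and if $m \ge 3^d + 4$ then $\S$ is a set association scheme by \Cref{lem:hamming-sandwiches}(2), hence schurian by \Cref{prop:nonschurian-enumeration}(1) (as $d \le 7$), hence $\X$ is schurian by \Cref{lem:prop-correspondence}(4) --- a contradiction; so $m \le 3^d + 3 \le 3^7+3$ and $n = m^d$ is bounded. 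Finally, if $k = 2$ with $d \in \{2,3\}$ or $k = 3$ with $d = 2$, then for $m \ge m_0(k,d)$ the partition $\S$ is a vector association scheme by \Cref{lem:large-cameron-sandwich-implies-VAS}; being homogeneous it is then trivial by \Cref{lem:d=2} (if $d = 2$) or schurian by \Cref{prop:kd<8} (if $d = 3$, since $kd = 6 < 8$), so $\X$ is schurian --- a contradiction; so $m < m_0(k,d)$, which is an absolute constant because $k$ and $d$ are fixed here, and $n = \binom mk^d$ is bounded.

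The main obstacle is the bookkeeping in the last paragraph: one must arrange the case division so that every shape for which no schurian-classification result yields a contradiction is a shape in which $n$ is bounded by a genuine \emph{absolute} constant, and not merely by something growing with $k$. This does work out, because the shapes invoking a classification result all have $k$ bounded (indeed $k \le 3$, apart from $d = 1$, where any $k$ is allowed but the conclusion is immediate), so the thresholds $3^d+4$ and $m_0(k,d)$ are true constants; meanwhile every shape with $k$ unbounded lies in the generic range of the second paragraph, where $\binom mk$ is already a large enough power of $m$ to absorb the $(m!)^d$ in $|\Aut(\X)|$. Everything else reduces to the short computations indicated above.
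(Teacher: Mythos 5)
Your proof is correct and uses essentially the same ingredients as the paper's: the bound $|\Aut(\X)| \le |\Aut(\Cam(m,k,d))| = m!^d\,d!$, the estimate $n = \binom{m}{k}^d \ge (m/k)^{kd}$, and the classification of small homogeneous vector/set association schemes (\Cref{prop:kd<8}, \Cref{prop:nonschurian-enumeration}, \Cref{lem:d=2}, via \Cref{lem:large-cameron-sandwich-implies-VAS} and \Cref{lem:prop-correspondence-cameron-sandwich}) to dispose of the parameter shapes where that estimate is too weak. The only difference is organizational: the paper first deduces $kd \le 8$ from the standing assumption $|\Aut(\X)| \ge \exp(n^{1/8})$ and then forces $kd = 8$ by the classification, whereas you split directly by $(k,d)$ into a generic range $n \ge c m^8$ (where the bound follows unconditionally) and finitely many small shapes (where nonschurianness yields a contradiction or $n$ is bounded).
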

    \begin{proof}
        If $|\Aut(\X)| \geq \exp(n^{1/8})$ and $n$ is sufficiently large (which we may assume) then \Cref{conj:sand} implies $\J(m, k)^d \leq \X \leq \Cam(m, k, d)$ for some $m, k, d$ with $m \geq 2k > 0$.
        Then
        \begin{equation}
            \label{eq:aut-bound}
            |\Aut(\X)| \leq |\Aut(\Cam(m, k, d))| = m!^d d! \leq \exp O(dm \log m + d \log d).
        \end{equation}
        Since $n = \binom{m}{k}^d \geq (m/k)^{kd}$ and $k, d \leq \log_2 n$, we deduce
        \[
            n^{1/8} \leq \log |\Aut(\X)| \leq O(n^{1/kd} (\log n)^3).
        \]
        Since we may assume that $n$ is large, it follows that $kd \le 8$.
        Hence $m$ is large and \Cref{lem:prop-correspondence-cameron-sandwich,lem:large-cameron-sandwich-implies-VAS} imply $\X = \J(m, k)^\S$ for some nonschurian homogeneous vector association scheme $\S$.
        By the previous proposition we must have $kd = 8$ so we get the claimed bound on $|\Aut(\X)|$ from \eqref{eq:aut-bound}.
    \end{proof}
    
    To end, we prove \Cref{prop:ranks}, restated here.
        
    \begin{proposition}
        Assume \Cref{conj:sand}.
        Let $\X$ be a nonschurian PCC of rank less than $12$.
        Then
        \[
            |\Aut(\X)| \leq \exp O((\log n)^{O(1)}).
        \]
    \end{proposition}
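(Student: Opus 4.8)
The plan is to run the argument of \Cref{prop:nonschurian1/8}, with the rank hypothesis taking over the role of the hypothesis on $|\Aut(\X)|$. Suppose the claimed bound fails. Then by \Cref{conj:sand} we may assume $\X$ is a Cameron sandwich, $\J(m,k)^d \leq \X \leq \Cam(m,k,d)$ with $m \geq 2k > 0$. The first step is to note that the rank bound pins down $(k,d)$: since $\X$ refines $\Cam(m,k,d)$, we have $\binom{k+d}{k} = \rank(\Cam(m,k,d)) \leq \rank(\X) < 12$, and if equality held here then $\X = \Cam(m,k,d)$ would be schurian, contrary to hypothesis; so in fact $\binom{k+d}{k} \leq 10$, and enumerating the solutions shows that $(k,d)$ is one of: $d = 1$ with $k \leq 9$; $k = 1$ with $d \leq 9$; or $(k,d) \in \{(2,2),(2,3),(3,2)\}$. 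In particular $k$ and $d$ are bounded, so $n = \binom{m}{k}^d$ satisfies $\log n = \Theta(\log m)$, while $|\Aut(\X)| \leq m!^d d! \leq \exp O(m\log m)$. If $m$ is bounded then $n$ is bounded and the claimed bound holds trivially; so we may assume $m$ is as large as we wish, in particular $m \geq 2k+1$ and $m \geq m_0(k,d)$ for the constant of \Cref{lem:large-cameron-sandwich-implies-VAS}.

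Next I would pass to the combinatorial side. By \Cref{lem:large-cameron-sandwich-implies-VAS} we have $\X = \J(m,k)^\S$ for a vector association scheme $\S$ on $\{0,\dots,k\}^d$; since $\X$ is primitive, $\S$ is homogeneous (\Cref{lem:prop-correspondence-cameron-sandwich}(2)); since $\X$ is nonschurian, so is $\S$ (\Cref{lem:prop-correspondence-cameron-sandwich}(4)); and $\rank(\S) = \rank(\X) < 12$. It then suffices to rule out, for each admissible $(k,d)$, a homogeneous nonschurian vector association scheme $\S$ on $\{0,\dots,k\}^d$ of rank less than $12$, which I would do using the theory already in place. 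If $d \leq 2$, covering $d = 1$ and the pairs $(2,2)$ and $(3,2)$, then \Cref{lem:d=2} forces $\S$ to be trivial, hence schurian, a contradiction. If $(k,d) = (2,3)$, then $kd = 6 < 8$, so \Cref{prop:kd<8} forces $\S$ to be schurian, a contradiction. If $k = 1$, then by \Cref{rem:vas-generalizes-sas} we may regard $\S$ as a nonschurian set association scheme of degree $d$; for $d \leq 7$ this contradicts \Cref{prop:nonschurian-enumeration}(1), and for $d = 8$, \Cref{prop:nonschurian-enumeration}(2) shows $\S$ is one of $\S_1,\dots,\S_8$, each of rank at least $25 > 12$, contradicting $\rank(\S) < 12$.

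The only case not settled by a citation is $k = 1$, $d = 9$, which I would handle by rank-counting. Here $\S$ is a nonschurian, hence nontrivial, set association scheme on $2^{[9]}$. Each cell of $\S$ has constant cardinality (\Cref{defn:SAS}(1)), so $\S$ refines the partition of $2^{[9]}$ by cardinality, namely $\S(\Sym(9))$, which has rank $10$; since $\S$ is nontrivial, it refines this partition strictly, and as the classes $\binom{[9]}{0}$ and $\binom{[9]}{9}$ are singletons, some class $\binom{[9]}{j}$ with $1 \leq j \leq 8$ splits into at least two cells of $\S$. As $9$ is odd, $9 - j \neq j$, so \Cref{lem:basic-props}(1) forces $\binom{[9]}{9-j}$ to split into at least two cells as well; counting cells then gives $\rank(\S) \geq 10 + 2 = 12$, a contradiction. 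Having reached a contradiction in every case, $\X$ cannot be a nonschurian Cameron sandwich of rank less than $12$ once $n$ is large, so by \Cref{conj:sand} the quasipolynomial bound on $|\Aut(\X)|$ holds --- and for the finitely many remaining values of $n$ it is trivial. The only place demanding real care is the parameter bookkeeping of the first paragraph; the substantive combinatorial input --- the enumeration in degree at most $8$ together with \Cref{prop:kd<8} --- is already available, so I anticipate no serious obstacle.
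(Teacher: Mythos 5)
Your proof is correct and follows essentially the same route as the paper: reduce via \Cref{conj:sand} to a Cameron sandwich, bound $(k,d)$ by $\rank(\Cam(m,k,d)) = \binom{k+d}{k} < 12$, pass to a homogeneous nonschurian vector association scheme, and eliminate the cases using \Cref{lem:d=2}, \Cref{prop:kd<8}, and \Cref{prop:nonschurian-enumeration}. The only divergence is cosmetic: you sharpen to $\binom{k+d}{k} \le 10$ (so only $d = 9$ remains for $k=1$) and then justify the residual degree-$9$ rank bound explicitly via the complementation argument from \Cref{lem:basic-props}(1), a detail the paper asserts without proof --- your argument for it is valid.
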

    \begin{proof}
        By \Cref{conj:sand} we may assume $\J(m,k)^d \le \X \le \Cam(m,k,d)$ for some $m,k,d$ with $m \ge 2k > 0$.
        In particular $\rank(\X) \ge \rank(\Cam(m,k,d)) = \binom{k+d}{k}$.
        Since $\rank(\X)$ is bounded it follows that $k$ and $d$ are bounded, so $m$ is large (since we may assume $n$ is large), and hence \Cref{lem:prop-correspondence-cameron-sandwich,lem:large-cameron-sandwich-implies-VAS} imply $\X= \J(m,k)^\S$ for some nonschurian homogeneous vector association scheme $\S$ on $\{0,\dots,k\}^d$.
        By \Cref{lem:d=2}, $d \ge 3$.
        Since $\binom{k+d}{k} < 12$, either $(k, d) = (2, 3)$ or $k = 1$.
        The case $(k,d) = (2,3)$ is impossible by \Cref{prop:kd<8}, so $k = 1$ and $\S$ can be identified with a set association scheme of degree $d$.
        For $d \le 8$ we have \Cref{prop:nonschurian-enumeration}, while any nontrivial set association scheme of degree at least $9$ has rank at least $12$.
    \end{proof}

%    \section{Questions}
%    
%    \begin{enumerate}
%        \item Are all set association schemes fully coherent?
%        \item Are there nonschurian set association schemes for all sufficiently large $d$?
%        \item Are there nonschurian vector association schemes for some $k \ge 2$? ...for all $k$?
%        \item Find a rigid homogeneous set association scheme (i.e., having no automorphisms).
%        \item Are almost all homogeneous set association schemes on $[d]$ rigid, as $d \to \infty$?
%        \item Are almost all set association schemes homogeneous?
%    \end{enumerate}
    
    \bibliography{refs}
\end{document}